
\documentclass[a4paper,reqno]{amsart}


\usepackage{etoolbox}

\newtoggle{default}
\newtoggle{siam}
\newtoggle{birk}
\newtoggle{amsproc}
\newtoggle{wiley}

\toggletrue{default}
\usepackage[utf8]{inputenc}
\usepackage[english]{babel}
\usepackage{microtype} 

\iftoggle{siam}%
{\usepackage{amsfonts,amsopn,amsmath,amssymb}}
{\usepackage{amsmath,amssymb,amsthm}} 

\usepackage{thmtools,thm-restate} 
\usepackage{mathtools} 
\usepackage{upgreek}
\usepackage{orcidlink}

\usepackage{hyperref} 
\hypersetup{%
  colorlinks=true,
  linkcolor=blue,
  citecolor=green!60!black,
  pdftitle={Well-posedness and stability of the Lagrange representation of the n-D wave equation via boundary triples},
  pdfauthor={Bernhard Aigner; Nathanael Skrepek}
}
\usepackage{cleveref} 

\usepackage{tikz} 
\usetikzlibrary{cd}
\usetikzlibrary{calc}

\usepackage{pifont} 
\usepackage{nicefrac} 

\usepackage{enumitem}
\setlist[itemize]{labelindent=1em,leftmargin=*,itemsep=2pt,parsep=2pt}
\setlist[enumerate]{label=\textup{(\roman{*})},labelindent=3pt,leftmargin=*,listparindent=0pt,parsep=1pt,itemsep=0pt}


\definecolor{nscolor}{rgb}{0.0,0.0,0.8}

\definecolor{bacolor}{rgb}{0,0.5,0}

\makeatletter
\newcommand{\pushright}[1]{\ifmeasuring@#1\else\omit\hfill$\displaystyle#1$\fi\ignorespaces}
\makeatother

\newcommand{\argdot}{\boldsymbol{\cdot}}

\newcommand{\R}{\mathbb{R}}
\newcommand{\N}{\mathbb{N}} 

\newcommand{\C}{\mathbb{C}}

\newcommand{\Lp}[1]{\mathrm{L}^{#1}} 
\newcommand{\soboH}{\mathrm{H}} 
\newcommand{\cH}{\mathring{\soboH}}
\newcommand{\hH}{\hat{\soboH}}
\newcommand{\Lb}{\mathcal{L}_{\mathrm{b}}} 

\newcommand{\conC}{\mathrm{C}}
\newcommand{\Cc}[1][\infty]{\conC_{\mathrm{c}}^{#1}}

\newcommand{\iu}{\mathrm{i}} 
\newcommand{\euler}{\mathrm{e}} 
\newcommand{\e}{\euler} 
\newcommand{\dd}{\mathrm{d}} 
\newcommand{\dx}[1][x]{\mathop{\dd#1}}

\newcommand{\idop}{\mathrm{I}}

\newcommand{\cl}[2][]{\overline{#2}\ifthenelse{ \equal{#1}{} }{}{^{#1}}} 

\DeclareMathOperator{\ran}{ran}

\DeclareMathOperator{\supp}{supp}
\DeclareMathOperator{\dom}{dom}

\DeclareMathOperator{\Div}{div}
\renewcommand{\div}{\Div}

\newcommand{\grad}{\nabla}
\newcommand{\cgrad}{\mathring{\nabla}}

\DeclarePairedDelimiter{\set}{\{}{\}}
\DeclarePairedDelimiter{\norm}{\lVert}{\rVert}

\DeclarePairedDelimiterX{\dset}[2]{\{}{\}}{#1\,\delimsize\vert\,\mathopen{} #2}
\DeclarePairedDelimiterX{\scprod}[2]{\langle}{\rangle}{#1,#2}
\DeclarePairedDelimiterX{\dualprod}[2]{\langle}{\rangle}{#1,#2}

\renewcommand{\Re}{\operatorname{Re}}

\newcommand{\cpt}{\overset{\mathsf{cpt}}{\hookrightarrow}}

\newcommand{\boundtr}[1][]{\gamma_{0}\ifthenelse{\equal{#1}{}}{}{\big\vert_{#1}}}
\newcommand{\normaltr}[1][]{\gamma_{\nu}\ifthenelse{\equal{#1}{}}{}{\big\vert_{#1}}}

\newcommand{\adjun}{^{\ast}}

\newcommand{\hamiltonian}{\mathcal{Q}}
\newcommand{\nrg}{\mathrm{E}}


\makeatletter

\makeatother

\ifboolexpr{togl{default} or togl{amsproc} or togl{birk}}{

\theoremstyle{plain}
\newtheorem{theorem}{Theorem}[section]
\newtheorem{lemma}[theorem]{Lemma}
\newtheorem{proposition}[theorem]{Proposition}
\newtheorem{corollary}[theorem]{Corollary}

\theoremstyle{definition}
\newtheorem{definition}[theorem]{Definition}


\theoremstyle{remark}
\newtheorem{remark}[theorem]{Remark}

}{}


\begin{document}

\title[Well-posedness and stability of the Lagrage representation]{Well-posedness and stability of the Lagrange representation of the n-D wave equation via boundary triples}

\keywords{Wave equation with split boundary, boundary triples, semi-uniform stability, rough coefficients, semigroups, dissipative operators, spectral theory}


\subjclass[2020]{35L05, 35B35, 46N20, 47B02, 47B44}

\begin{abstract}
  We study the Lagrange representation of the wave equation with generalized Laplacian $\div T \grad$.
  We allow the coefficients---the Young modulus $T$ and the density $\rho$---to be $\Lp{\infty}$ or even nonlocal operators.
  Moreover, the Lipschitz boundary of the domain $\Omega$ can be split into several parts admitting Dirichlet, Neumann and/or Robin-boundary conditions of displacement, velocity and stress.
  We show well-posedness of this classical model of the wave equation utilizing boundary triple theory for skew-adjoint operators. In addition, we show semi-uniform stability of solutions under slightly stronger assumptions by means of a spectral result.
\end{abstract}


\author[B.~Aigner]{Bernhard Aigner\,\orcidlink{0009-0009-8252-162X}}
\thanks{Supported by the state of Saxony via a graduate student stipend}

\address{Institute of Applied Analysis, TU Bergakademie Freiberg, Akademiestraße 6, 09599 Freiberg, Sachsen, Germany}
\email{bernhard.aigner@doktorand.tu-freiberg.de}


\author[N.~Skrepek]{Nathanael Skrepek\,\orcidlink{0000-0002-3096-4818}}

\address{Department of Applied Mathematics, University of Twente, P.O.\ Box 217, 7500 AE Enschede, Overijssel, The Netherlands}
\email{n.skrepek@utwente.nl}

\date{Dec 15, 2025}
\maketitle

\section{Introduction}%
\label{sec:introduction}


There is a plethora of studies on the wave equation and it is difficult to even quote the most significant ones. We simply name \cite{Pazy1983} for a classical semigroup approach, since we will employ a semigroup approach as well, and \cite{KurZwa2015}, since it regards the wave equation from the port-Hamiltonian perspective, but of course plenty of other tools are available as well, e.g., \cite{Waurick2022}.
%
The (classical) formulation of the wave equation on a bounded Lipschitz domain $\Omega \subseteq \R^{d}$ we are investigating in this article is the following second order partial differential equation
\begingroup%
\ifboolexpr{togl{default} or togl{birk} or togl{amsproc} or togl{siam}}{%
\thinmuskip=2mu%
\medmuskip=3mu%
\thickmuskip=3mu plus 1mu%
}{}%
\begin{align}
  \begin{aligned}
    \label{FullWE}
    \rho(\zeta) \tfrac{\partial^{2}}{\partial t^{2}} w(t,\zeta) &= \div T(\zeta) \grad w(t,\zeta) - a(\zeta) w(t,\zeta) - b(\zeta) \tfrac{\partial}{\partial t} w(t,\zeta), &&t \geq 0, \zeta \in \Omega, \\
    w(0,\zeta) &= w_{0}(\zeta), && \phantom{t \geq 0,\mathopen{}} \zeta \in \Omega, \\
    \tfrac{\partial}{\partial t} w(0,\zeta) &= w_{1}(\zeta), && \phantom{t \geq 0,\mathopen{}} \zeta \in \Omega.
  \end{aligned}
\end{align}
\endgroup
For the initial discussion we will exclude the terms $a$ and $b$, as we will take care of them with a perturbation argument later on. The coefficients $T$ and $\rho$ are the Young modulus and the material density, respectively. In the standard port-Hamiltonian approach, cf.\ \cite{KurZwa2015}, one introduces the new state variable $\begin{psmallmatrix} \grad w \\ \rho w_{t} \end{psmallmatrix}$, which yields the following representation of the wave equation
\begin{align*}
  \frac{\partial}{\partial t}
  \begin{pmatrix} \grad w \\ \rho w_{t}\end{pmatrix}
  =
  \underbrace{\begin{pmatrix}0 & \grad \\ \div & 0\end{pmatrix}}_{\eqcolon \mathrlap{\mathcal{J}_{1}}}
  \underbrace{\begin{pmatrix} T & 0 \\ 0 & \frac{1}{\rho} \end{pmatrix}}_{\eqcolon \mathrlap{\hamiltonian_{1}}}
  \begin{pmatrix} \grad w \\ \rho w_{t} \end{pmatrix}.
\end{align*}
This is the so-called \emph{Dirac representation}, which is comprised of the formally skew-adjoint $\mathcal{J}_{1}$ (representing the underlying Dirac structure/subspace) and the bounded, positive and self-adjoint $\hamiltonian_{1}$ (representing the Lagrangian structure/subspace).
However, in \cite{BeHaLeMa2024} an alternative so-called \emph{Lagrange representation} was proposed, which uses
$\begin{psmallmatrix} w \\ \rho w_{t}\end{psmallmatrix}$ as state variable. The corresponding system is
\begin{equation}\label{eq:Lagrange-abstract-Cauchy-problem}
  \frac{\partial}{\partial t}
  \begin{pmatrix} w \\ \rho w_{t} \end{pmatrix}
  =
  \underbrace{\begin{pmatrix}0 & \idop \\ -\idop & 0\end{pmatrix}}_{\eqcolon\mathrlap{\mathcal{J}_{2}}}
  \underbrace{\begin{pmatrix} - \div T \grad & 0 \\ 0 & \frac{1}{\rho} \end{pmatrix}}_{\eqcolon\mathrlap{\hamiltonian_{2}}}
  \begin{pmatrix} w \\ \rho w_{t}\end{pmatrix}.
\end{equation}
Similar to the Dirac representation, $\mathcal{J}_{2}$ is skew-adjoint and $\hamiltonian_{2}$ is formally self-adjoint and positive. In contrast to the Dirac-representation however, $\mathcal{J}_{2}$ is bounded and $\hamiltonian_{2}$ is unbounded. This formulation also corresponds to the standard method to turn the wave equation into a first order problem, cf.\ \cite[Sec.~7.4]{Pazy1983}.

Naturally, employing terms like (un)bounded and self-adjoint warrants specifying the state spaces. For the Dirac representation one chooses $\Lp{2}(\Omega;\C^{d}) \times \Lp{2}(\Omega;\C)$ and for the Lagrange representation it comes naturally to choose $\soboH^{1}(\Omega;\C) \times \Lp{2}(\Omega;\C)$. In order to analyze well-posedness for the Dirac representation, it suffices to analyze $\mathcal{J}_{1}$, as $\hamiltonian_{1}$ can be incorporated into the inner product and thus does not play a role, cf.\ \cite[Lem.~7.2.3]{JacZwa2012}. We want to mimic this approach for the Lagrange representation, but since $\hamiltonian_{2}$ is unbounded, it does not immediately provide an equivalent inner product on the state space, i.e., we want to regard the following almost inner product
\begin{multline*}
  \scprod{x}{y}_{\hamiltonian_{2}}
  \coloneq
  \scprod{\hamiltonian_{2} x}{y}
  = \scprod*{%
    \begin{pmatrix} -\div T \grad & 0 \\ 0 & \frac{1}{\rho}\end{pmatrix}
    \begin{pmatrix}x_{1} \\ x_{2}\end{pmatrix}
  }{\begin{pmatrix}y_{1} \\ y_{2}\end{pmatrix}} \\
  = \scprod{\tfrac{1}{\rho} x_{2}}{y_{2}} - \scprod{\div T \grad x_{1}}{y_{1}}\text{.}
\end{multline*}
Applying integration by parts yields
\begin{equation*}
  \scprod{x}{y}_{\hamiltonian_{2}}
  = \scprod{\tfrac{1}{\rho} x_{2}}{y_{2}} + \scprod{T \grad x_{1}}{\grad y_{1}} - \scprod*{\nu \cdot T \grad x_{1}\big\vert_{\partial\Omega}}{y_{1}\big\vert_{\partial\Omega}}\text{,}
\end{equation*}
where $\nu$ denotes the unit normal vector on $\partial\Omega$. It is worth pointing out that this expression is only an inner product on the state space $\soboH^{1}(\Omega;\C) \times \Lp{2}(\Omega;\C)$ if we additionally impose a boundary condition such as $\nu \cdot T \grad x_{1}\big\vert_{\partial\Omega} = - k_{1} x_{1}\big\vert_{\partial\Omega}$, where $k_{1}$ is a positive semi-definite operator\footnote{We are in particular interested in multiplication operators that may vanish on parts of the boundary.} on $\Lp{2}(\partial\Omega;\C)$. In that case we obtain
\begin{equation*}
  \scprod{x}{y}_{\hamiltonian_{2}}
  = \scprod{\tfrac{1}{\rho} x_{2}}{y_{2}} + \scprod{T \grad x_{1}}{\grad y_{1}} + \scprod*{k_{1} x_{1}\big\vert_{\partial\Omega}}{y_{1}\big\vert_{\partial\Omega}}
\end{equation*}
and we can appeal to the Friedrichs/Poincar\'e inequality. Alternatively, the boundary condition $x_{1} \big\vert_{\partial\Omega} = 0 = y_{1}\big\vert_{\partial\Omega}$ would eliminate the boundary parts completely. In either case, the above expression induces an equivalent inner product on the state space $\soboH^{1}(\Omega;\C) \times \Lp{2}(\Omega;\C)$.
Note that the energy of a state $x = \begin{psmallmatrix} x_{1} \\ x_{2} \end{psmallmatrix} \in \soboH^{1}(\Omega;\C) \times \Lp{2}(\Omega;\C)$ is given by
\begin{equation*}
  \nrg(x) \coloneq \scprod{\tfrac{1}{\rho} x_{2}}{x_{2}} + \scprod{T \grad x_{1}}{\grad x_{1}}.
\end{equation*}
Therefore the inner product $\scprod{\argdot}{\argdot}_{\hamiltonian_{2}}$ is composed of an energy part and a boundary part.
We will construct a boundary triple for the operator in the abstract Cauchy equation~\eqref{eq:Lagrange-abstract-Cauchy-problem}, which will enable us to parameterize all dissipative boundary conditions, i.e., boundary conditions where the solution does not grow. Loosely speaking, we will come to the conclusion that boundary conditions of the form
\begin{equation*}
  k_{1} x_{1} \big\vert_{\partial\Omega} + \nu \cdot T \grad x_{1} \big\vert_{\partial\Omega} + k_{2} \tfrac{1}{\rho} x_{2}\big\vert_{\partial\Omega} = 0,
\end{equation*}
or in terms of $w$
\begin{equation*}
   k_{1} w \big\vert_{\partial\Omega} + \nu \cdot T \grad w \big\vert_{\partial\Omega} + k_{2} \partial_{t}w \big\vert_{\partial\Omega}= 0,
\end{equation*}
where $k_{1}$ is the positive semi-definite operator from before and $k_{2}$ is another positive semi-definite operator on $\Lp{2}(\partial\Omega;\C)$, will be well-posed. We point out that in contrast to the Dirac representation, cf.\ \cite{KurZwa2015,Arendt2023}, we can formulate boundary conditions that involve the displacement $w$ itself (in addition to the velocity $\partial_{t} w$ and the normal stress $\nu \cdot T \grad w$).


Additionally, since $k_{1}$ and $k_{2}$ are allowed to be semi-definite, we can split the boundary into five (possibly empty) parts and allow the following boundary conditions:
\begin{align}
  \begin{alignedat}{3}
    \label{eq:boundary-conditions}
    w &= 0 & \quad& \text{on}\ \Gamma_{0}, \\
    \nu \cdot T \grad w &= 0 && \text{on}\ \Gamma_{1}, \\
    k_{1} w + \nu \cdot T \grad w &= 0 && \text{on}\ \Gamma_{2}, \\
    \nu \cdot T \grad w + k_{2} \partial_{t}w &= 0 && \text{on}\ \Gamma_{3}, \\
    k_{1}w + \nu \cdot T \grad w + k_{2}\partial_{t}w& = 0 && \text{on}\ \Gamma_{4},
  \end{alignedat}
\end{align}
where we assume $k_{1}$ to be nonzero almost everywhere on $\Gamma_{2}\cup \Gamma_{4}$ and $k_{2}$ to be nonzero almost everywhere on $\Gamma_{3}\cup \Gamma_{4}$, i.e., the vanishing parts are being accounted for by $\Gamma_{0},\Gamma_{1}$ and $\Gamma_{3}$ or by $\Gamma_{0},\Gamma_{1}$ and $\Gamma_{2}$ respectively.

Note that $\Gamma_{0} = \emptyset$ or $\Gamma_{2} \cup \Gamma_{4} = \emptyset$ ($k_{1} = 0$) is allowed, but not simultaneously, i.e., we require
\begin{equation*}
  \Gamma_{0} \neq \emptyset \quad\text{or}\quad k_{1} \neq 0.
\end{equation*}
This is necessary to make sure that $\scprod{\argdot}{\argdot}_{\hamiltonian_{2}}$ is an inner product.

In this work, boundary triples are used to show the well-posedness of the boundary conditions \eqref{eq:boundary-conditions}. In a related context, boundary triples have also proven useful in \cite{Gernandt2025}, where they are employed to investigate boundary conditions ensuring a Lagrangian subspace.

Following up on well-posedness, we will investigate stability of solutions. The dissipative relation on the boundary can be viewed as a boundary feedback or damping. Damped wave equations have been studied by multiple authors, in particular Zuazua (e.g., \cite{Zuazua1990}), but more recently (even for the delay case) by Pignotti et.\ al.\ (e.g., \cite{Pignotti2010,Pignotti2011}). We point out, that opposed to our approach in this article, stability results for the wave equation usually cover the case, where the damping happens in the interior and geometric conditions (such as the Geometric Control Condition) are a necessary cost to pay.

We will pursue an alternative strategy, following \cite{Jacob2021} in our approach, settling for a weaker notion than exponential stability, by the name of ``semi-uniform stability'', cf.\ \Cref{sec:stability}. Slightly stronger assumptions on the coefficients $T$ and $\rho$ in conjunction with dissipativity will prove enough to show at least semi-uniform stability of solutions to problem \eqref{FullWE}. In particular, we have to show that there is no spectrum on the imaginary axis. A similar approach has been applied to Maxwell's equations in \cite{SkrepekWaurick2024}.

\subsection*{Assumptions}
To turn this outline into rigorous mathematics, we now state the basic assumptions that will be used throughout the remainder of the article, unless explicitly stated otherwise.
\begin{enumerate}[label=\textup{(A\arabic{*})}]
  \item Let $\Omega$ be a bounded and connected Lipschitz domain in $\R^{d}$.

  \item\label{item:assumption-Young-modulus} Let $T \in \Lb(\Lp{2}(\Omega;\C^{d}))$ with $c^{-1}\idop < T < c\idop$ for some $c>0$.\footnote{$\Lb(X)$ denotes the linear and bounded operators on a normed vector space $X$.}
        \\
        For stability additionally: $T \in \Lp{\infty}(\Omega;\C^{d\times d})$ and Lipschitz continuous.\footnote{We identify the $\Lp{\infty}$ function with the induced multiplication operator.}

  \item Let $\rho \in \Lb(\Lp{2}(\Omega;\C))$ with $c^{-1}\idop <\rho < c\idop$ for some $c>0$.
        \\
        For stability additionally: $\rho \in \Lp{\infty}(\Omega;\C)$ and Lipschitz continuous.\footnotemark[\value{footnote}]
  
  \item The boundary is split into five open, disjoint and possible empty parts $\Gamma_{0}, \Gamma_{1}, \Gamma_{2}, \Gamma_{3}, \Gamma_{4} \subseteq \partial\Omega$ that satisfy
        \begin{equation*}
          \cl{\Gamma_{0} \cup \Gamma_{1} \cup \Gamma_{2} \cup \Gamma_{3} \cup \Gamma_{4}} = \partial\Omega
          \quad\text{and}\quad
          \sum_{i=0}^{4}\mu(\Gamma_{i}) = \mu(\partial\Omega),
        \end{equation*}
        where $\mu$ is the surface measure of $\partial\Omega$. These conditions ensure that the $\Gamma$'s cover $\partial\Omega$ in the sense that the uncovered parts are negligible.

  \item Let $w_{0}\in \cH^{1}_{\Gamma_{0}}(\Omega;\C)$ and $w_{1}\in \Lp{2}(\Omega;\C)$.\footnote{Here $\cH^{1}_{\Gamma_{0}}(\Omega) = \dset{v\in \soboH^{1}(\Omega)}{v\vert_{\Gamma_{0}}=0}$. We will come back to this space in \Cref{sec:well-posedness}.}
        
  \item\label{item:assumption-boundary-operators}
        We assume $k_{1},k_{2} \in \Lp{\infty}(\partial\Omega \setminus \cl{\Gamma_{0}};\R)$ such that $k_{1},k_{2} \geq 0$ and $k_{1} \neq 0$ or $\Gamma_{0} \neq \emptyset$.
        \\
        For stability additionally: $\exists \Gamma \subseteq \partial\Omega \setminus \cl{\Gamma_{0}}$ non-empty open such that $\supp k_{2} \supseteq \Gamma$.\footnote{This just means that the damping acts on an open set.}

  \item Let $a,b \in \Lp{\infty}(\Omega;\C)$.\footnote{More general assumptions are possible, cf.\ \cref{sec:well-posedness,sec:stability}.}
\end{enumerate}

\begin{remark}
  Assumption~\ref{item:assumption-boundary-operators} is not as general as possible, because otherwise the formulation would be a bit cumbersome. We could relax the conditions to: $k_{1}, k_{2} \in \Lb(\Lp{2}(\partial\Omega \setminus \cl{\Gamma_{0}}))$ positive semi-definite such that
  $1 \notin \ker k_{1}$ or $\Gamma_{0} \neq \emptyset$. For stability we additionally ask for
  \begin{equation}\label{eq:vanish-on-open-set}
    k_{2}f = 0 \quad\implies\quad \exists \Gamma \subseteq \partial\Omega \setminus \cl{\Gamma_{0}} \ \text{open and non-empty}\, : \, f \big\vert_{\Gamma} = 0.
  \end{equation}
  The condition $1 \notin \ker k_{1}$ is necessary for Friedrichs/Poincar{\'e} inequality and \eqref{eq:vanish-on-open-set} is necessary to apply the unique continuation principle.
\end{remark}

%
%
%

In the following, we first present a short preliminary section, which serves as a reminder for well-established concepts such as trace operators and boundary triples, which we will employ to prove our main results concerning well-posedness (\Cref{sec:well-posedness}) and stability (\Cref{sec:stability}).
We will end our article with a conclusion and a short appendix on the applicability of a unique continuation theorem in \Cref{sec:stability} and an additional result regarding regularity in the case of the trace maps taking values in $\Lp{2}(\partial\Omega; \C)$.

\section{Preliminaries}
\label{sec:preliminaries}

This section is largely a short recapitulation of established theory that we present both for the convenience of the reader, as well as to establish notation that we are going to use in consecutive sections.

Note that we will often not specify the codomain of $\Lp{p}$ spaces, Sobolev spaces and spaces of continuous functions as it will be clear from the context, e.g., we write simply $\Lp{2}(\Omega)$ for both $\Lp{2}(\Omega;\C)$ and $\Lp{2}(\Omega;\C^{k})$.

\subsection{Sobolev Spaces}
We clarify/introduce some notation first: We let $\soboH^{1}(\Omega)$ be the space of all $\Lp{2}$-functions with distributional derivative in $\Lp{2}(\Omega)$ also, together with the norm $\norm{\argdot}_{\soboH^{1}(\Omega)}\coloneq \sqrt{\norm{\argdot}_{\Lp{2}(\Omega)}^{2}+\norm{\grad\argdot}_{\Lp{2}(\Omega)}^{2}}$. In other words, $\soboH^{1}(\Omega) = \bigl(\dom (\grad), \norm{\argdot}_{\dom(\grad)}\bigr)$, where $\grad$ is the weak gradient on $\Lp{2}(\Omega)$.
Similarly, the corresponding space for the divergence operator $\div f \coloneq \sum_{i=1}^{d} \partial_{i} f_{i}$ is
\begin{equation*}
  \soboH(\div,\Omega) \coloneq \dset[\big]{f \in \Lp{2}(\Omega;\C^{d})}{\div f \in \Lp{2}(\Omega;\C) \ \text{(in the distributional sense)}}.
\end{equation*}
Note that $\Cc(\cl{\Omega};\C^{d}) \coloneq \dset*{f \big\vert_{\cl{\Omega}}}{f \in \Cc(\R^{d};\C^{d})}$ is dense in $\soboH(\div,\Omega)$, see, e.g.,  \cite[Ch.~IX Part A Sec.~2 Thm.~1]{DautrayLions2000} or \cite[Thm.~3.18]{Sk21}.

\subsection{Trace Operators}
The following can be found in more detail in \cite[Appendix~A]{KurZwa2015}.
For $f\in \Cc(\cl{\Omega})$ we can define the map
\begin{align*}
  \tilde{\gamma}_{0}\colon
  \left\{
  \begin{array}{rcl}
    \Cc(\cl{\Omega}) & \to & \Lp{2}(\partial\Omega), \\
    f & \mapsto & f \big\vert_{\partial\Omega}.
  \end{array}
  \right.
\end{align*}
We can extend $\tilde{\gamma}_{0}$ to a continuous map $\boundtr \colon \soboH^{1}(\Omega) \to \Lp{2}(\partial\Omega)$, the \emph{Dirichlet trace operator}. We define its image as
\begin{equation*}
  \soboH^{\frac{1}{2}}(\partial\Omega) \coloneq \ran \boundtr ,
  \quad\text{equipped with }\quad
  \norm{\phi}_{\soboH^{\frac{1}{2}}(\partial\Omega)} \coloneq \inf \dset{\norm{g}_{\soboH^{1}(\Omega)}}{\boundtr g = \phi}.
\end{equation*}
This space is even a Hilbert space, since it can be represented by the quotient space $\soboH^{1}(\Omega) / \ker \boundtr$.
Note that $\boundtr$ is a continuous map from $\soboH^{1}(\Omega)$ to $\soboH^{\frac{1}{2}}(\partial\Omega)$ and $\soboH^{\frac{1}{2}}(\partial\Omega)$ is continuously embedded in $\Lp{2}(\partial\Omega)$.
Let $\Gamma_{0}, \tilde{\Gamma} \subseteq \partial\Omega$ be a (measurable) partition of $\partial\Omega$ (up to sets of measure zero).
We can restrict $\boundtr f$ to $\Gamma_{0}$, which makes $\boundtr \big\vert_{\Gamma_{0}} \colon f \mapsto \boundtr f \big\vert_{\Gamma_{0}}$ continuous from $\soboH^{1}(\Omega)$ to $\Lp{2}(\Gamma_{0})$ also. Hence,
\begin{equation*}
  \cH^{1}_{\Gamma_{0}}(\Omega) \coloneq \ker \boundtr \big\vert_{\Gamma_{0}} = \dset*{f \in \soboH^{1}(\Omega)}{\boundtr f \big\vert_{\Gamma_{0}} = 0}
\end{equation*}
is closed and therefore a Hilbert space when equipped with $\norm{\argdot}_{\soboH^{1}(\Omega)}$. We define the corresponding trace space\footnote{Note that $\cH^{\frac{1}{2}}(\tilde{\Gamma})$ is not the same as the fractional Sobolev space $\soboH^{\frac{1}{2}}(\tilde{\Gamma})$, as not every function in $\soboH^{\frac{1}{2}}(\tilde{\Gamma})$ can be extended by $0$ to elements of $\soboH^{\frac{1}{2}}(\partial\Omega)$, e.g., constant functions.}
\begin{equation*}
  \cH^{\frac{1}{2}}(\tilde{\Gamma}) \coloneq \boundtr \cH^{1}_{\Gamma_{0}}(\partial\Omega)
\end{equation*}
and endow it with $\norm{\argdot}_{\soboH^{\frac{1}{2}}(\partial\Omega)}$. As the functions in $\cH^{\frac{1}{2}}(\tilde{\Gamma})$ are $0$ on $\Gamma_{0}$, we usually regard this space as continuously embedded in $\Lp{2}(\tilde{\Gamma})$.
We define the dual space of $\cH^{\frac{1}{2}}(\tilde{\Gamma})$ with pivot space $\Lp{2}(\tilde{\Gamma})$ as
\begin{equation*}
  \soboH^{-\frac{1}{2}}(\tilde{\Gamma}) \coloneq \bigl(\cH^{\frac{1}{2}}(\tilde{\Gamma})\bigr)^{\prime},
\end{equation*}
i.e., $\cH^{\frac{1}{2}}(\tilde{\Gamma}) \subseteq \Lp{2}(\tilde{\Gamma}) \subseteq \soboH^{-\frac{1}{2}}(\tilde{\Gamma})$ forms a Gelfand triple. The pairing between $\cH^{\frac{1}{2}}(\tilde{\Gamma})$ and $\soboH^{-\frac{1}{2}}(\tilde{\Gamma})$ is an extension of the inner product of $\Lp{2}(\tilde{\Gamma})$:
\begin{equation*}
  \dualprod{f}{g}_{\soboH^{-\frac{1}{2}}(\tilde{\Gamma}),\cH^{\frac{1}{2}}(\tilde{\Gamma})} \coloneq
  \lim_{n\to \infty}\scprod{f_{n}}{g}_{\Lp{2}(\tilde{\Gamma})},
\end{equation*}
where $(f_{n})_{n\in\N}$ is a sequence in $\Lp{2}(\tilde{\Gamma})$ converging to $f \in \soboH^{-\frac{1}{2}}(\tilde{\Gamma})$ (w.r.t.\ $\norm{\argdot}_{\soboH^{-\frac{1}{2}}(\tilde{\Gamma})}$) and $g \in \cH^{\frac{1}{2}}(\tilde{\Gamma})$. Moreover, we define $\dualprod{g}{f}_{\cH^{\frac{1}{2}}(\tilde{\Gamma}),\soboH^{-\frac{1}{2}}(\tilde{\Gamma})}$ as the complex conjugate of $\dualprod{f}{g}_{\soboH^{-\frac{1}{2}}(\tilde{\Gamma}),\cH^{\frac{1}{2}}(\tilde{\Gamma})}$ and we will use the short notation
\begin{align*}
  \dualprod{f}{g}_{\mp \frac{1}{2}} \coloneq  \dualprod{f}{g}_{\soboH^{-\frac{1}{2}}(\tilde{\Gamma}),\cH^{\frac{1}{2}}(\tilde{\Gamma})}
  \quad\text{and}\quad
  \dualprod{g}{f}_{\pm \frac{1}{2}} \coloneq \dualprod{g}{f}_{\cH^{\frac{1}{2}}(\tilde{\Gamma}),\soboH^{-\frac{1}{2}}(\tilde{\Gamma})}.
\end{align*}


For $f\in \Cc(\cl{\Omega};\C^{d})$ we can define the map
\begin{align*}
  \tilde{\gamma}_{\nu}\colon
  \left\{
  \begin{array}{rcl}
   \Cc(\cl{\Omega};\C^{d}) & \to & \Lp{2}(\tilde{\Gamma}), \\
    f & \mapsto & \nu \cdot f \big\vert_{\tilde{\Gamma}}.
  \end{array}
  \right.
\end{align*}
Via continuous extension we can extend $\tilde{\gamma}_{\nu}$ to $\normaltr\colon \soboH(\div,\Omega)\to \soboH^{- \frac{1}{2}}(\tilde{\Gamma})$, the \emph{normal trace operator}.

\begin{theorem}[{\cite[Theorem~A.8]{KurZwa2015}}] 
  \label{th:NormalTrace}
  The \emph{normal trace operator} $\normaltr \colon \soboH(\div,\Omega) \to \soboH^{- \frac{1}{2}}(\tilde{\Gamma})$ is a bounded, linear and surjective operator.
\end{theorem}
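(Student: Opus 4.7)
The plan is to define $\normaltr$ via Green's formula and, for surjectivity, to produce a preimage through an auxiliary elliptic problem.

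For the definition, start from smooth $f \in \Cc(\cl{\Omega})^{d}$. Classical integration by parts against any test function $g \in \cH^{1}_{\Gamma_{0}}(\Omega)$ (whose trace vanishes on $\Gamma_{0}$, so no contribution from that part of the boundary appears) yields
\begin{equation*}
  \scprod{\nu \cdot f}{\boundtr g}_{\Lp{2}(\tilde{\Gamma})}
  = \scprod{\div f}{g}_{\Lp{2}(\Omega)} + \scprod{f}{\grad g}_{\Lp{2}(\Omega)}.
\end{equation*}
The right-hand side makes sense for any $f \in \soboH(\div,\Omega)$ and is bounded by $\bigl(\norm{\div f}_{\Lp{2}(\Omega)} + \norm{f}_{\Lp{2}(\Omega)}\bigr)\norm{g}_{\soboH^{1}(\Omega)}$. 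Since it depends on $g$ only through $\phi \coloneqq \boundtr g \big\vert_{\tilde{\Gamma}}$, minimizing over representatives of $\phi \in \cH^{\frac{1}{2}}(\tilde{\Gamma})$ gives the estimate
\begin{equation*}
  \bigl|\dualprod{\normaltr f}{\phi}_{\pm\frac{1}{2}}\bigr|
  \le C \norm{f}_{\soboH(\div,\Omega)} \norm{\phi}_{\cH^{\frac{1}{2}}(\tilde{\Gamma})}.
\end{equation*}
Hence $\normaltr f$ is a well-defined element of $\soboH^{-\frac{1}{2}}(\tilde{\Gamma})$ and the resulting map is linear and bounded on the dense subspace $\Cc(\cl{\Omega})^{d}$ of $\soboH(\div,\Omega)$; density (recalled in the excerpt) extends it continuously to the whole space.

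For surjectivity, given $\psi \in \soboH^{-\frac{1}{2}}(\tilde{\Gamma})$, I would solve the variational problem: find $u \in \cH^{1}_{\Gamma_{0}}(\Omega)$ such that
\begin{equation*}
  \scprod{\grad u}{\grad v}_{\Lp{2}(\Omega)} + \scprod{u}{v}_{\Lp{2}(\Omega)}
  = \dualprod*{\psi}{\boundtr v \big\vert_{\tilde{\Gamma}}}_{\mp\frac{1}{2}}
  \quad \text{for all } v \in \cH^{1}_{\Gamma_{0}}(\Omega).
\end{equation*}
The left-hand side is the $\soboH^{1}(\Omega)$ inner product restricted to the closed subspace $\cH^{1}_{\Gamma_{0}}(\Omega)$, hence bounded and coercive; the right-hand side is a bounded antilinear functional on $\cH^{1}_{\Gamma_{0}}(\Omega)$ by continuity of $\boundtr \big\vert_{\tilde{\Gamma}}$ into $\cH^{\frac{1}{2}}(\tilde{\Gamma})$. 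The Lax--Milgram lemma then yields a unique $u$. Setting $f \coloneqq \grad u \in \Lp{2}(\Omega)^{d}$ and testing with $v \in \Cc(\Omega) \subseteq \cH^{1}_{\Gamma_{0}}(\Omega)$ shows $\div f = -u$ in the distributional sense, so $f \in \soboH(\div,\Omega)$. Substituting this $f$ back into the identity defining $\normaltr$ established above identifies $\normaltr f$ with $\psi$.

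The main obstacle is the choice of variational problem in the surjectivity step. Keeping the zero-order term $\scprod{u}{v}_{\Lp{2}(\Omega)}$ is what makes the argument uniform in the geometry: without it coercivity would rely on a Friedrichs/Poincar\'e inequality on $\cH^{1}_{\Gamma_{0}}(\Omega)$ and thus on $\Gamma_{0} \neq \emptyset$, which is \emph{not} an assumption of the theorem. A secondary technicality is tracking complex conjugations consistently so that $\normaltr f$ genuinely lives in $\soboH^{-\frac{1}{2}}(\tilde{\Gamma})$ rather than its antidual; this is merely bookkeeping but easy to get wrong given that $\dualprod{\argdot}{\argdot}_{\mp\frac{1}{2}}$ and $\dualprod{\argdot}{\argdot}_{\pm\frac{1}{2}}$ differ by complex conjugation.
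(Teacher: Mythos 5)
The paper does not prove \Cref{th:NormalTrace} itself; it imports the statement from \cite[Thm.~A.8]{KurZwa2015}, so there is no internal proof to compare with. Your argument is the standard one, and essentially the route of the cited reference: define $\normaltr$ through the Green identity on the dense subspace $\Cc(\cl{\Omega})^{d}$, bound the resulting functional by minimizing over representatives $g \in \cH^{1}_{\Gamma_{0}}(\Omega)$ of $\phi = \boundtr g$ (which is indeed the $\soboH^{\frac{1}{2}}(\partial\Omega)$-norm of the zero extension, since any $H^{1}$-representative of a trace vanishing on $\Gamma_{0}$ automatically lies in $\cH^{1}_{\Gamma_{0}}(\Omega)$), and obtain surjectivity from a coercive auxiliary variational problem. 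Your observation that keeping the zero-order term avoids any appeal to Friedrichs/Poincar\'e, and hence to $\Gamma_{0} \neq \emptyset$, is exactly right.

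One sign must be corrected in the surjectivity step: testing your variational identity with $v \in \Cc(\Omega)$ gives $\scprod{\grad u}{\grad v}_{\Lp{2}(\Omega)} = -\scprod{u}{v}_{\Lp{2}(\Omega)}$, hence $\div(\grad u) = +u$, not $-u$, in the distributional sense. This is not merely cosmetic, because the concluding identification is
\begin{equation*}
  \dualprod{\normaltr f}{\boundtr g}_{\mp\frac{1}{2}}
  = \scprod{\div f}{g}_{\Lp{2}(\Omega)} + \scprod{f}{\grad g}_{\Lp{2}(\Omega)}
  = \scprod{u}{g}_{\Lp{2}(\Omega)} + \scprod{\grad u}{\grad g}_{\Lp{2}(\Omega)}
  = \dualprod{\psi}{\boundtr g}_{\mp\frac{1}{2}}
\end{equation*}
for all $g \in \cH^{1}_{\Gamma_{0}}(\Omega)$, and the middle equality matches your variational formulation precisely because the zero-order term enters with a plus sign; with $\div f = -u$ this step would fail. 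With the sign fixed, and invoking surjectivity of $\boundtr$ onto $\cH^{\frac{1}{2}}(\tilde{\Gamma})$ to pass from the pairing identity to $\normaltr f = \psi$, the proof is complete.
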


Both trace operators  together give rise to the following integration by parts rule:
\begin{theorem}[Integration by parts]
  \label{th:PI}
  For any $F\in \soboH(\div,\Omega)$ and $g\in \cH_{\Gamma_{0}}^{1}(\Omega)$ there holds:
  \begin{equation*}
    \scprod{\div F}{g}_{\Lp{2}(\Omega)}
    + \scprod{F}{\grad g}_{\Lp{2}(\Omega)}
    = \dualprod{\normaltr F}{\boundtr g}_{\soboH^{-\frac{1}{2}}(\tilde{\Gamma}),\cH^{\frac{1}{2}}(\tilde{\Gamma})}.
  \end{equation*}
\end{theorem}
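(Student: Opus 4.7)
The plan is a standard density argument reducing the identity to the classical divergence theorem. First I would pick an approximating sequence $F_n \in \Cc(\cl{\Omega})^d$ with $F_n \to F$ in $\soboH(\div,\Omega)$, available from the density statement recalled just above. For each $F_n$, the classical divergence theorem on the Lipschitz domain $\Omega$ yields an integration-by-parts identity with the boundary contribution expressed as the surface integral $\int_{\partial\Omega}(\nu\cdot F_n)\,\conj{g}\,\dd\sigma$, valid for every $g\in \soboH^{1}(\Omega)$. Specialising to $g\in\cH^{1}_{\Gamma_0}(\Omega)$, the condition $\boundtr g\vert_{\Gamma_0}=0$ lets me restrict the integration domain from $\partial\Omega$ to $\tilde{\Gamma}$. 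Since on the smooth level $\normaltr F_n$ coincides with the classical $\nu\cdot F_n\vert_{\tilde{\Gamma}}\in\Lp{2}(\tilde{\Gamma})$, the surface integral in turn equals the abstract pairing $\dualprod{\normaltr F_n}{\boundtr g}_{\mp 1/2}$ by the very construction of that pairing as the continuous extension of the $\Lp{2}(\tilde{\Gamma})$ inner product.

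Next I would pass to the limit $n\to\infty$. The two $\Lp{2}(\Omega)$ inner products converge directly because $F_n\to F$ and $\div F_n\to \div F$ in $\Lp{2}(\Omega)$. For the boundary term, \Cref{th:NormalTrace} provides continuity of $\normaltr\colon\soboH(\div,\Omega)\to\soboH^{-1/2}(\tilde{\Gamma})$, so $\normaltr F_n\to\normaltr F$ in $\soboH^{-1/2}(\tilde{\Gamma})$; together with the fact that $\boundtr g\in\cH^{1/2}(\tilde{\Gamma})$ is fixed and the joint continuity of the duality pairing, this yields convergence of the boundary pairings. Taking limits on both sides then produces the claimed identity.

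The only genuinely delicate point is the identification, at the smooth level, of the classical surface integral with the abstract duality pairing. This is really a consistency check baked into the definition of the Gelfand triple $\cH^{1/2}(\tilde{\Gamma})\subseteq\Lp{2}(\tilde{\Gamma})\subseteq\soboH^{-1/2}(\tilde{\Gamma})$ introduced in \Cref{sec:preliminaries}: both trace maps were obtained by continuous extension from their classical actions on $\Cc(\cl{\Omega})$-type functions, and $\dualprod{\argdot}{\argdot}_{\mp 1/2}$ is the continuous extension of the $\Lp{2}(\tilde{\Gamma})$ inner product, so the two formulations of the boundary term genuinely agree on the smooth subspace. Everything else is routine, requiring no tool beyond continuity.
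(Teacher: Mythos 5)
Your proposal is correct and follows essentially the same route as the paper: establish the identity for smooth $F$ via the classical divergence theorem (with the boundary integral restricted to $\tilde{\Gamma}$ since $\boundtr g$ vanishes on $\Gamma_{0}$), then extend by density of $\Cc(\cl{\Omega})^{d}$ in $\soboH(\div,\Omega)$ using continuity of both sides. Your write-up merely makes explicit the limit passage (continuity of $\normaltr$ from \Cref{th:NormalTrace} and consistency of the duality pairing with the $\Lp{2}(\tilde{\Gamma})$ inner product) that the paper leaves implicit.
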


\begin{proof}
  For $F \in \conC^{\infty}(\cl{\Omega})^{d}$ and $g\in \cH_{\Gamma_{0}}^{1}(\Omega)$ we have:
  \begin{equation*}
    \int_{\Omega}\scprod{\div F}{g}_{\C} \dx[\uplambda]
    + \int_{\Omega} \scprod{F}{\grad g}_{\C^{d}} \dx[\uplambda]
    = \int_{\tilde{\Gamma}}(\nu \cdot F) \overline{g} \dx[\mu].
  \end{equation*}
  Density of $\Cc(\cl{\Omega};\C^{d})$ in $\soboH(\div,\Omega)$ implies the claim.
\end{proof}



The Friedrichs/Poincar\'e inequality (\Cref{th:Poincare}) allows $\soboH^{1}(\Omega)$ to be equipped with an equivalent inner product:
\begin{equation}\label{eq:equivalent-H1-norm}
  \scprod{f}{g}_{\soboH^{1}(\Omega)}\coloneq \scprod{\grad f}{\grad g}_{\Lp{2}(\Omega)}+\scprod{k_{1}\boundtr f}{\boundtr g}_{\Lp{2}(\tilde{\Gamma})}.
\end{equation}
In particular we are interested in $\cH^{1}_{\Gamma_{0}}(\Omega)$, which is a subset of $\soboH^{1}(\Omega)$.
Note that \eqref{eq:equivalent-H1-norm} still defines an equivalent norm on $\cH^{1}_{\Gamma_{0}}(\Omega)$ if $k_{1} = 0$, provided that $\Gamma_{0} \neq \emptyset$. Hence, we need
\begin{equation*}
  k_{1} \neq 0 \quad\text{or}\quad \Gamma_{0} \neq \emptyset.
\end{equation*}
\subsection{Dissipative operators}

We remind the reader of a few commonly known facts about dissipative operators:
\begin{definition}
  A linear operator $A\colon H\supseteq \dom A\to H$ on a Hilbert space $H$ is called \emph{dissipative}, if $\Re \scprod{Ax}{x}_{H} \leq 0$ for all $x\in \dom A$. $A$ is called \emph{maximally dissipative} if there is no proper dissipative extension of $A$.
\end{definition}



Here are a few characterizations of maximally dissipative operators:
\begin{proposition}
  Let $A$ be a densely defined, linear operator on a Hilbert space $H$. The following are equivalent:
  \begin{enumerate}[leftmargin=5ex]
    \item $A$ is maximally dissipative.
    \item $A$ is dissipative and $\ran (\lambda I - A)$ is onto for some (and hence all) $\lambda >0$.
    \item 
          $A$ is closed and both $A$ and $A^{\ast}$ are dissipative.
  \end{enumerate}
\end{proposition}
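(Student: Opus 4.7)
The plan is to prove this as a standard Lumer--Phillips-type characterization. The workhorse is the dissipativity inequality: for every $\lambda > 0$ and $x \in \dom A$, expanding the square gives
\begin{equation*}
  \norm{(\lambda I - A)x}^{2} = \lambda^{2}\norm{x}^{2} - 2\lambda\Re\scprod{Ax}{x} + \norm{Ax}^{2} \geq \lambda^{2}\norm{x}^{2}.
\end{equation*}
I would prove the chain (ii) with \emph{some} $\lambda > 0$ $\Rightarrow$ (i) $\Rightarrow$ (ii) with \emph{every} $\lambda > 0$, which simultaneously establishes (i) $\Leftrightarrow$ (ii) and the some-implies-all clause, and then handle (i) $\Rightarrow$ (iii) and (iii) $\Rightarrow$ (ii).

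The direction (ii) $\Rightarrow$ (i) is quick: given a dissipative extension $\tilde{A} \supseteq A$ and $x \in \dom \tilde{A}$, surjectivity of $\lambda I - A$ yields $y \in \dom A$ with $(\lambda I - A)y = (\lambda I - \tilde{A})x$; the dissipativity inequality applied to $\tilde{A}$ on $x - y$ forces $x = y$, so $\tilde{A} = A$. For (iii) $\Rightarrow$ (ii), closedness of $A$ combined with the inequality makes $\ran(\lambda I - A)$ closed, and any $y \perp \ran(\lambda I - A)$ lies in $\dom A\adjun$ with $A\adjun y = \lambda y$, so $\lambda\norm{y}^{2} = \Re\scprod{A\adjun y}{y} \leq 0$ by dissipativity of $A\adjun$, forcing $y = 0$. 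Conversely, once (ii) holds for every $\lambda > 0$ one obtains $\norm{(\lambda I - A\adjun)z} \geq \lambda\norm{z}$ on $\dom A\adjun$ via $(\lambda I - A\adjun)^{-1} = ((\lambda I - A)^{-1})\adjun$; expansion yields $2\lambda\Re\scprod{A\adjun z}{z} \leq \norm{A\adjun z}^{2}$ for all $\lambda > 0$, and letting $\lambda \to \infty$ gives $\Re\scprod{A\adjun z}{z} \leq 0$.

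The main obstacle is (i) $\Rightarrow$ (ii) for every $\lambda > 0$. Maximality first forces $A$ to be closed: the closure $\cl{A}$ exists because a densely defined dissipative operator is closable (if $x_{n} \to 0$ and $Ax_{n} \to z$, testing dissipativity on $ty + x_{n}$ for $y \in \dom A$ and letting $t \to 0^{+}$ in a routine computation yields $\scprod{z}{y} = 0$ on a dense set), and $\cl{A}$ is again dissipative by continuity of the inner product. Now fix $\lambda > 0$ and suppose for contradiction that $\ran(\lambda I - A)$, closed by the inequality, is proper. Pick $0 \neq y \perp \ran(\lambda I - A)$; then $y \in \dom A\adjun$ with $A\adjun y = \lambda y$, and $y \notin \dom A$ (otherwise $\Re\scprod{(\lambda I - A)y}{y} \geq \lambda\norm{y}^{2} > 0$ contradicts orthogonality). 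The subtle point---where the argument could easily derail---is that the naive extension $\tilde{A}y = \lambda y$ is \emph{not} dissipative; the correct choice is $\tilde{A}$ on $\dom A + \spn\set{y}$ defined by $\tilde{A}(x + \alpha y) = Ax - \alpha\lambda y$. Using $\scprod{Ax}{y} = \scprod{x}{A\adjun y} = \lambda\scprod{x}{y}$, the cross terms cancel exactly and one is left with $\Re\scprod{\tilde{A}(x+\alpha y)}{x+\alpha y} = \Re\scprod{Ax}{x} - \lambda|\alpha|^{2}\norm{y}^{2} \leq 0$. This proper dissipative extension contradicts maximality, so $\ran(\lambda I - A) = H$.
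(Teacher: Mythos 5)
Your argument is correct. Note, though, that the paper does not prove this proposition at all: it simply cites \cite[Sec.~II.3b]{Engel2006}, so there is no in-paper proof to match. Compared with that reference (which works in the Banach-space Lumer--Phillips framework, via duality sets and resolvent estimates), your proof is a genuinely Hilbert-space argument, and a complete one: the chain (ii)-for-some-$\lambda$ $\Rightarrow$ (i) $\Rightarrow$ (ii)-for-all-$\lambda$ cleanly delivers the ``some hence all'' clause; the closability of a densely defined dissipative operator plus maximality gives closedness of $A$; and the heart of (i) $\Rightarrow$ (ii) is the classical Phillips-type construction of the proper dissipative extension $\tilde{A}(x+\alpha y)=Ax-\alpha\lambda y$ on $\dom A + \spn\set{y}$ with $y\perp\ran(\lambda I-A)$, $A\adjun y=\lambda y$ --- exactly the right choice, since $\tilde{A}y=\lambda y$ would fail. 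Two cosmetic remarks: the cross terms do not literally cancel but sum to a purely imaginary number (their real part vanishes), which is all you use and your displayed identity is exact; and in the contradiction step the closedness of $\ran(\lambda I-A)$ needs the closedness of $A$ in addition to the lower bound $\norm{(\lambda I-A)x}\geq\lambda\norm{x}$ --- you have established it just before, so the ingredients are in place, but the phrase ``closed by the inequality'' slightly understates what is used. The passage from (ii)-for-all-$\lambda$ to dissipativity of $A\adjun$ via $(\lambda I-A\adjun)^{-1}=\bigl((\lambda I-A)^{-1}\bigr)\adjun$ and $\lambda\to\infty$ is also fine, so all three equivalences are covered. What your route buys is self-containedness in the Hilbert setting of the paper; what the cited treatment buys is validity in general Banach spaces, where ``maximally dissipative'' and the range condition are no longer equivalent notions.
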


Proofs of these statements as well as the next theorem are contained in \cite[Sec.~II.3b]{Engel2006}. We will rely on the following well-known theorem:

\begin{theorem}[Lumer--Philips]
  \label{th:Lumer-Philips}
  Let $A\colon H\supseteq \dom A\to H$ be a densely defined, linear operator on a Hilbert space $H$. Then $A$ is infinitesimal generator of a contraction semigroup if and only if $A$ is maximally dissipative.
\end{theorem}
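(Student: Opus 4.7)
The plan is to deduce the Lumer--Phillips theorem from the Hille--Yosida theorem using the characterisations of maximal dissipativity recorded in the preceding proposition. I treat the two implications separately, as both rely on the standard interplay between the resolvent bound $\norm{(\lambda - A)^{-1}} \le 1/\lambda$ and the contractivity of the semigroup.

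For the direction $A$ maximally dissipative $\Rightarrow$ $A$ generates a contraction semigroup, I would first exploit the dissipativity estimate. For $\lambda > 0$ and $x \in \dom A$ one computes
\begin{equation*}
  \norm{(\lambda - A) x}_{H} \norm{x}_{H}
  \ge \Re \scprod{(\lambda - A) x}{x}_{H}
  = \lambda \norm{x}_{H}^{2} - \Re \scprod{Ax}{x}_{H}
  \ge \lambda \norm{x}_{H}^{2},
\end{equation*}
so $\lambda - A$ is injective and $\norm{(\lambda-A)x}_H \ge \lambda \norm{x}_H$. By characterisation (ii) of the preceding proposition, maximal dissipativity yields surjectivity of $\lambda - A$ for some (hence all) $\lambda > 0$, so $(\lambda - A)^{-1}$ exists as a bounded everywhere-defined operator satisfying $\norm{(\lambda - A)^{-1}} \le 1/\lambda$. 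Since $A$ is closed (characterisation (iii)) and densely defined, the Hille--Yosida theorem now applies and delivers a contraction $C_{0}$-semigroup with generator $A$.

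For the converse, assume $A$ generates a contraction semigroup $(T(t))_{t \ge 0}$. For $x \in \dom A$ the function $t \mapsto \norm{T(t) x}_{H}^{2}$ is differentiable at $0$ with derivative $2 \Re \scprod{Ax}{x}_{H}$. Contractivity forces this derivative to be non-positive, so $\Re \scprod{Ax}{x}_{H} \le 0$ and $A$ is dissipative. Moreover, standard semigroup theory gives $(0,\infty) \subseteq \rho(A)$, so in particular $\lambda - A$ is surjective for every $\lambda > 0$. Invoking characterisation (ii) of the preceding proposition once more, $A$ is maximally dissipative.

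The only nontrivial ingredient is the Hille--Yosida theorem, used in the forward direction; the remaining steps are essentially the computations of $\Re \scprod{(\lambda - A)x}{x}_{H}$ and of $\frac{d}{dt}\norm{T(t)x}_{H}^{2}$ at $t = 0$. There is no genuine obstacle provided the characterisations of maximal dissipativity from the preceding proposition are available, as they package the nontrivial equivalence between ``no proper dissipative extension'' and ``$\lambda - A$ surjective''.
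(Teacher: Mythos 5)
Your argument is correct, and it is essentially the standard Hille--Yosida-based proof of Lumer--Phillips: the paper itself offers no proof but refers to \cite[Sec.~II.3.b]{Engel2006}, where the same strategy (dissipativity estimate $\norm{(\lambda-A)x}\geq\lambda\norm{x}$ plus range condition in one direction, differentiation of $t\mapsto\norm{T(t)x}^{2}$ plus $(0,\infty)\subseteq\rho(A)$ in the other) is carried out. No gaps to report.
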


\subsection{Helmholtz decomposition}
The classical Helmholtz decomposition allows the splitting of a vector field into a gradient field and a divergence-free vector field. We will use a slight modification of the classical Helmholtz decomposition and introduce
\begin{align*}
  \cH^{1}(\Omega) &\coloneq \cH_{\partial\Omega}^{1}(\Omega) = \dset{f \in \soboH^{1}(\Omega)}{\boundtr f = 0}
  \\
  \soboH(\div 0,\Omega) &\coloneq \dset{v\in \soboH(\div,\Omega)}{\div v=0} = \ker \div.
\end{align*}

With this notation, we show the following modification of the classical result:
\begin{theorem}[Helmholtz decomposition]
  \label{th:Helmholtz}
  Let
  $T\in \Lb(\Lp{2}(\Omega))$ be as in \ref{item:assumption-Young-modulus}, i.e., $T$ positive definite such that $c^{-1}\idop < T < c\idop$ for some $c>0$. Then
  \begin{equation*}
    \Lp{2}(\Omega)= T \grad \cH^{1}(\Omega)\oplus_{T^{-1}} \soboH(\div 0,\Omega),
  \end{equation*}
  where $\oplus_{T^{-1}}$ denotes the orthogonal sum w.r.t.\ the inner product $\scprod{f}{g}_{T^{-1}} \coloneq \scprod{T^{-1}f}{g}$.
\end{theorem}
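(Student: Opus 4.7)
The plan is to verify the two defining properties of the decomposition separately: (i) the subspaces $T\grad\cH^{1}(\Omega)$ and $\soboH(\div 0,\Omega)$ are mutually $\scprod{\argdot}{\argdot}_{T^{-1}}$-orthogonal, and (ii) together they span $\Lp{2}(\Omega;\C^{d})$. These two properties immediately give both existence and uniqueness of the splitting.

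First I would establish (i) directly via the integration-by-parts formula. For $\phi \in \cH^{1}(\Omega) = \cH^{1}_{\partial\Omega}(\Omega)$ and $g \in \soboH(\div 0,\Omega)$, \Cref{th:PI} applied with $\Gamma_{0} = \partial\Omega$ (legitimate since $\boundtr\phi = 0$) yields
\begin{equation*}
  \scprod{T\grad\phi}{g}_{T^{-1}}
  = \scprod{\grad\phi}{g}_{\Lp{2}(\Omega)^{d}}
  = -\scprod{\phi}{\div g}_{\Lp{2}(\Omega)} + \dualprod{\normaltr g}{\boundtr\phi}_{\mp\frac{1}{2}}
  = 0,
\end{equation*}
using $\div g = 0$ and $\boundtr\phi = 0$.

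Next I would apply the Lax--Milgram lemma on $\cH^{1}(\Omega)$ to prove (ii). Given $h \in \Lp{2}(\Omega;\C^{d})$, consider the sesquilinear form $a(\phi,\psi) \coloneqq \scprod{T\grad\phi}{\grad\psi}$ and the antilinear functional $F(\psi) \coloneqq \scprod{h}{\grad\psi}$. Boundedness of $a$ and $F$ is immediate; coercivity of $a$ follows from the lower bound $T \geq c^{-1}\idop$ together with the Friedrichs/Poincar\'e inequality, which applies on $\cH^{1}_{\partial\Omega}(\Omega)$ because the full Dirichlet trace vanishes. Lax--Milgram then produces a unique $\phi \in \cH^{1}(\Omega)$ with $\scprod{T\grad\phi - h}{\grad\psi} = 0$ for every $\psi \in \cH^{1}(\Omega)$. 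Setting $g \coloneqq h - T\grad\phi \in \Lp{2}(\Omega;\C^{d})$ and testing with $\psi \in \Cc(\Omega) \subseteq \cH^{1}(\Omega)$ gives $\div g = 0$ in the distributional sense, so $g \in \soboH(\div 0,\Omega)$ and $h = T\grad\phi + g$ is the desired decomposition. Uniqueness follows from (i): if $T\grad\phi + g = 0$, then pairing with $T\grad\phi$ in $\scprod{\argdot}{\argdot}_{T^{-1}}$ gives $0 = \scprod{T\grad\phi}{T\grad\phi}_{T^{-1}} = \scprod{\grad\phi}{T\grad\phi}_{\Lp{2}(\Omega)^{d}} \geq c^{-1}\norm{\grad\phi}_{\Lp{2}(\Omega)^{d}}^{2}$, forcing $\grad\phi = 0$, hence $\phi = 0$ by Friedrichs, and finally $g = 0$.

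The only subtle point is implicit: Assumption (A2) must be in force so that $T$ (and therefore $T^{-1}$) is self-adjoint and positive, guaranteeing that $\scprod{\argdot}{\argdot}_{T^{-1}}$ really is an inner product on $\Lp{2}(\Omega;\C^{d})$ equivalent to the standard one. Once that is granted the rest is a textbook Lax--Milgram computation on $H^{1}_{0}(\Omega)$, the only mild twist being that the test input in the variational formulation is $\grad\psi$ rather than $\psi$.
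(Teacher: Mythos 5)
Your proof is correct, but it takes a genuinely different route from the paper. The paper proves the unweighted statement first: it writes $\Lp{2}(\Omega)$ as $\cl{\ran \cgrad}\oplus\ker\div$ (kernel--range decomposition for the closed operator $\cgrad$), and the whole work goes into showing that $\ran\cgrad$ is already closed, which is done via the Poincar\'e inequality \emph{plus} Rellich's compactness theorem; the weight is then absorbed by observing that the $T$ in $T\grad\cH^{1}(\Omega)$ and the $T^{-1}$ in the inner product cancel, so the weighted orthogonal complement of $\ker\div$ is exactly $T\grad\cH^{1}(\Omega)$. You instead solve the weak Dirichlet problem $\div T\grad\phi=\div h$ by Lax--Milgram and define $g\coloneqq h-T\grad\phi$; this handles the weight $T$ head-on, needs only Friedrichs/Poincar\'e (no Rellich or compactness), produces an element of $T\grad\cH^{1}(\Omega)$ directly so that closedness of that subspace comes out as a by-product rather than as a separate lemma, and would even survive a merely coercive, non-self-adjoint $T$ as far as the direct-sum statement is concerned. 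Your closing caveat is also on point: both arguments tacitly use Assumption (A2) (positivity and self-adjointness of $T$) to make $\scprod{\argdot}{\argdot}_{T^{-1}}$ an inner product, which the bare hypothesis ``boundedly invertible'' in the theorem statement does not deliver; the paper relies on this just as silently as you do. The only cosmetic blemish is the integration-by-parts line in step (i), where the sign/conjugation of the two boundary and divergence terms is slightly off relative to \Cref{th:PI}; since both terms vanish ($\div g=0$, $\boundtr\phi=0$), this does not affect the argument.
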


\begin{proof}
  First note that the operator $T$ in front of the first orthogonal space and the $T^{-1}$ attached in the first component of the inner product corresponding to $\oplus_{T^{-1}}$ cancel out and give the standard Helmholtz decomposition with $T=\mathrm{id}$.
  The proof of this standard decomposition is simply an application of the orthogonal decomposition of $\Lp{2}(\Omega)$ into $\cl{\ran L}$ and $\ker L^{\ast}$, where $L$ is the densely defined, linear operator $\cgrad$, i.e., the gradient with Dirichlet boundary condition ($\grad$ restricted to $\cH^{1}(\Omega) \subseteq \Lp{2}(\Omega)$).

  Hence, we only have to verify closedness of $\ran L$. Let $\{u_{n}\}_{n}\subseteq \ran L$ with $u_{n}=Lv_{n}$ for some $v_{n}\in \Lp{2}(\Omega)$ converge to some $u\in \Lp{2}(\Omega)$. Convergence of $(u_{n})_{n}$ implies boundedness of $(Lv_{n})_{n}$ in $\Lp{2}(\Omega)$. Since $\norm{\grad \argdot}_{\Lp{2}(\Omega)}$ and $\norm{\argdot}_{\soboH^{1}(\Omega)}$ are equivalent on $\cH^{1}(\Omega)$ by virtue of the standard Friedrichs/Poincar\'e inequality, this implies boundedness of $(v_{n})_{n}$ as a sequence in $\soboH^{1}(\Omega)$. By the Rellich--Kondrachov theorem (\cite[Thm.~12.30]{Leoni2017}), there exists a strongly in $\Lp{2}(\Omega)$ convergent subsequence $(v_{n_{k}})_{k}$ of $(v_{n})_{n}$. Appealing to the closedness of $L$, we obtain that $u=\lim_{k\to \infty}u_{n_{k}} = \lim_{k\to \infty}L v_{n_{k}} = L\lim_{k\to \infty}v_{n_{k}}$, hence $u\in \ran L$.
\end{proof}

\subsection{Boundary triples}
\label{subsec:BdyTriples}
Boundary triples were originally developed for symmetric operators, cf.~\cite{GorGor1991,BeHaSn2020}, but they can be equivalently defined in the skew-symmetric case, cf.~\cite{Wegner2017} and \cite[Ch.~2.4]{Sk-Phd2021}. This is a simple consequence of multiplying the equations by the imaginary unit $\iu$.

Boundary triples are certainly not the only tool one can apply to model (dissipative) boundary conditions. Other viable concepts include m-boundary tuples (cf.~\cite{EllerKarabash2022}), which additionally require a pivot space for the dual pair, the recent notion of {\em generalized} boundary triples (cf.~\cite{Behrndt2025}), as well as boundary quadruples (cf.~\cite{Arendt2023}) and boundary systems (cf.~\cite{Waurick2015}), which exist even if a skew-adjoint extension of the operator does not exist. However, all these notions are equivalent to the concept of boundary triples given strong enough assumptions (cf.~\cite{Waurick2018} for the equivalence of boundary systems and boundary triples). In our context, it suffices to consider boundary triples though.  We will present the definition used in \cite{Sk-Phd2021}, which allows a dual pair instead of a single (identified) Hilbert space as a boundary space.

\begin{definition}
  \label{def:BdyTriple}
  Let $A_{0}$ be a densely defined, skew-symmetric and closed operator on a Hilbert space $H$. A \emph{boundary triple} for $A_{0}^{\ast}$ is a triple $\bigl((\mathcal{B}_{+},\mathcal{B}_{-}),B_{1},B_{2}\bigr)$ consisting of a complete dual pair\footnote{\emph{Complete dual pair} simply means that that the spaces are dual to each other, but we do not identify them by means of the Riesz isomorphism in the case of a Hilbert space.} $(\mathcal{B}_{+},\mathcal{B}_{-})$ and two operators $B_{1}\colon\dom(A_{0}^{\ast})\to \mathcal{B}_{+}$ and $B_{2}\colon\dom(A_{0}^{\ast})\to \mathcal{B}_{-}$ such that
  \begin{enumerate}[leftmargin=5ex]
    \item the map
          \begin{align*}
            B=\begin{pmatrix}B_{1}\\B_{2}\end{pmatrix}\colon
            \left\{
            \begin{array}{rcl}
              \dom(A_{0}^{\ast}) & \to & \mathcal{B}_{+}\times\mathcal{B}_{-}, \\
              x & \mapsto & \begin{pmatrix}B_{1}x\\B_{2}x\end{pmatrix},
            \end{array}
            \right.
          \end{align*}
          is onto and
    \item\label{item:abstract-green} the following abstract Green identity holds for all $x,y\in\dom(A_{0}^{\ast})$:
          \begin{equation*}
            \scprod{A_{0}^{\ast}x}{y}_{X} + \scprod{x}{A_{0}^{\ast}y}_{X} = \scprod{B_{1}x}{B_{2}y}_{\mathcal{B}_{+},\mathcal{B}_{-}} + \scprod{B_{2}x}{B_{1}y}_{\mathcal{B}_{-},\mathcal{B}_{+}}.
          \end{equation*}
  \end{enumerate}
\end{definition}

The abstract Green identity~\ref{item:abstract-green} formalizes an integration by parts formula, which is the reason for the designation boundary triple.

A boundary triple enables us to parameterize all boundary conditions such that $A_{0}\adjun$ restricted to all elements of $\dom A_{0}\adjun$, that satisfy the boundary condition, is maximally dissipative. In our situation we will obtain a boundary triple that involves the trace spaces $\cH^{\frac{1}{2}}(\Gamma)$ and $\soboH^{-\frac{1}{2}}(\Gamma)$, but we would rather formulate boundary conditions in the pivot space $\Lp{2}(\Gamma)$. Verifying dissipativity is usually straightforward, but the maximality can be tricky.
In the next section, we will arrive at a situation in which $((\mathcal{B}_{-},\mathcal{B_{+}}),B_{1},B_{2})$ is a boundary triple\footnote{The order of $(\mathcal{B}_{-},\mathcal{B_{+}})$ is swapped on purpose, because later we will use $(\soboH^{-\frac{1}{2}}(\tilde{\Gamma}),\cH^{\frac{1}{2}}(\tilde{\Gamma}))$.}, where the duality of $(\mathcal{B}_{+},\mathcal{B_{-}})$ is induced by a pivot space $\mathcal{B}_{0}$. Moreover, $\mathcal{B_{+}} \subseteq \mathcal{B}_{0} \subseteq \mathcal{B}_{-}$ forms a Gelfand triple. We want to formulate Robin type boundary conditions of the form
\begin{equation*}
  B_{1}x + \Theta B_{2}x = 0,
\end{equation*}
where $\Theta \in \Lb(\mathcal{B}_{0})$. To formulate this rigorously, we have to take the embedding mappings $j_{+} \colon \mathcal{B}_{+} \to \mathcal{B}_{0}$ and $j_{-} \colon \mathcal{B}_{0} \to \mathcal{B}_{-}$ into account, i.e.,
\begin{equation*}
  j_{-}^{-1} B_{1} x + \Theta j_{+} B_{2}x = 0,
\end{equation*}
or equivalently
\begin{equation*}
  B_{1} x = - j_{-} \Theta j_{+} B_{2}x.
\end{equation*}
The corresponding operator is
\begin{align}
  \label{eq:DefATheta}
  \begin{aligned}
  A_{\Theta} &\coloneq A_{0}\adjun \big\vert_{\dom A_{\Theta}}, \\
    \dom A_{\Theta} &\coloneq \dset{x \in \dom A_{0}\adjun}{B_{1} x = - j_{-} \Theta j_{+} B_{2}x}.
  \end{aligned}
\end{align}
We denote the induced operator between $\mathcal{B}_{+}$ and $\mathcal{B}_{-}$ by $\hat{\Theta}$, i.e.,
\begin{equation*}
  \hat{\Theta} \coloneq - j_{-} \Theta j_{+}.
\end{equation*}
By the theory of boundary triples, $A_{\Theta}$ is maximally dissipative if and only if $\hat{\Theta}$ is maximally dissipative, see, e.g., \cite[Prop.~2.4.10]{Sk-Phd2021} or \cite[Cor.~2.1.4]{BeHaSn2020}.

\begin{proposition}
  \label{th:AThetaMDiss}
  Let $(\mathcal{B}_{+}, \mathcal{B}_{0}, \mathcal{B}_{-})$ be a Gelfand triple, $((\mathcal{B}_{-},\mathcal{B_{+}}),B_{1},B_{2})$ be a boundary triple for $A_{0}\adjun$ and $\Theta \in \Lb(\mathcal{B}_{0})$.
  Then $A_{\Theta}$ is maximally dissipative if $\Theta$ is positive (semi-definite), i.e., $\scprod{\Theta h}{h}_{\mathcal{B}_{0}} \geq 0$ for all $h \in \mathcal{B}_{0}$.
\end{proposition}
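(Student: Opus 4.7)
The plan is to invoke the cited reduction principle from \cite[Prop.~2.4.10]{Sk-Phd2021} (or \cite[Cor.~2.1.4]{BeHaSn2020}), which asserts that $A_{\Theta}$ is maximally dissipative if and only if the induced boundary parameter $\hat{\Theta} = -j_{-} \Theta j_{+}$ is maximally dissipative as a (bounded, everywhere-defined) operator from $\mathcal{B}_{+}$ into $\mathcal{B}_{-}$. This reduces the proof to an analysis on the boundary spaces and spares us the need to directly verify a range condition for $A_{\Theta}$ in the (large) Hilbert space $H$.

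For the dissipativity of $\hat{\Theta}$, I would exploit the defining property of the Gelfand triple pairing: for $z \in \mathcal{B}_{0}$ and $w \in \mathcal{B}_{+}$ one has $\scprod{j_{-} z}{w}_{\mathcal{B}_{-},\mathcal{B}_{+}} = \scprod{z}{j_{+} w}_{\mathcal{B}_{0}}$, i.e., the pairing extends the $\mathcal{B}_{0}$ inner product. Applied to $x \in \mathcal{B}_{+}$ with $z = \Theta j_{+} x \in \mathcal{B}_{0}$ this gives
\[
  \Re \scprod{\hat{\Theta} x}{x}_{\mathcal{B}_{-},\mathcal{B}_{+}} = - \Re \scprod{\Theta j_{+} x}{j_{+} x}_{\mathcal{B}_{0}} \leq 0,
\]
where the inequality is exactly the positive semi-definiteness of $\Theta$ assumed in the statement.

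For maximality, the crucial point is that $\hat{\Theta}$ is bounded (since $\Theta$ and the embeddings $j_{\pm}$ are) and defined on all of $\mathcal{B}_{+}$. Within the linear-relation framework used in \cite{Sk-Phd2021} a bounded, single-valued, everywhere-defined dissipative operator cannot be properly extended while remaining a dissipative linear relation with the appropriate closedness property; any attempted extension would either enlarge the (already total) domain or introduce a nontrivial multivalued part incompatible with the boundary-triple setting. Hence $\hat{\Theta}$ is maximally dissipative, and the cited equivalence then transfers this property to $A_{\Theta}$.

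I expect the main obstacle to be purely conceptual bookkeeping rather than hard analysis: one must carefully track the sign convention (the minus in $\hat{\Theta} = -j_{-} \Theta j_{+}$ is tied to the skew-symmetric convention of \Cref{def:BdyTriple}), and verify that the notion of maximal dissipativity used in the cited reference matches the dual-pair pairing used here. As an alternative (if one prefers to bypass the reduction), the same positivity calculation combined with the abstract Green's identity~\ref{item:abstract-green} immediately yields $\Re \scprod{A_{\Theta} x}{x}_{H} = -\Re \scprod{\Theta j_{+} B_{2} x}{j_{+} B_{2} x}_{\mathcal{B}_{0}} \leq 0$ for all $x \in \dom A_{\Theta}$, after which maximality could be proven directly via surjectivity of $B = (B_{1},B_{2})$ and a resolvent-type argument for $A_{0}\adjun$; but this second route is strictly more laborious.
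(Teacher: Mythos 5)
Your proposal is correct and follows essentially the same route as the paper: both reduce the claim via the cited boundary-triple result (\cite[Prop.~2.4.10]{Sk-Phd2021}) to maximal dissipativity of $\hat{\Theta} = -j_{-}\Theta j_{+} \in \Lb(\mathcal{B}_{+},\mathcal{B}_{-})$, and the dissipativity computation $\Re\dualprod{\hat{\Theta}x}{x}_{\mathcal{B}_{-},\mathcal{B}_{+}} = -\Re\scprod{\Theta j_{+}x}{j_{+}x}_{\mathcal{B}_{0}} \leq 0$ is identical. The only divergence is how maximality of $\hat{\Theta}$ is obtained: the paper shows the stronger statement that $\hat{\Theta}$ is self-adjoint (using $j_{+}\adjun = j_{-}$, $j_{-}\adjun = j_{+}$ and the fact that a positive bounded operator on a complex Hilbert space is self-adjoint) and negative, and then passes to $\hat{\Theta}^{-1}$ to match the notation of the cited proposition; you instead argue that a bounded, everywhere-defined dissipative operator admits no proper dissipative extension. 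That argument is valid, but your justification is slightly misplaced: a proper relation extension of an operator with domain all of $\mathcal{B}_{+}$ necessarily acquires a nontrivial multivalued part $(0,z)$, $z\neq 0$, and this is ruled out not by ``the boundary-triple setting'' but by dissipativity itself --- for such an extension $(x,\hat{\Theta}x+\lambda z)$ belongs to the relation for every scalar $\lambda$, and choosing $x$ with $\dualprod{z}{x}_{\mathcal{B}_{-},\mathcal{B}_{+}}\neq 0$ (possible by nondegeneracy of the pairing) and $\lambda$ large violates $\Re\dualprod{\argdot}{\argdot}\leq 0$. With that repair, both routes are equally rigorous; the paper's version additionally records self-adjointness of $\hat{\Theta}$, which is slightly more information than needed, while yours is marginally more economical.
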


Note that instead of positivity we could have asked for accretivity, i.e., $\Re \scprod{\Theta h}{h}_{\mathcal{B}_{0}} \geq 0$ for all $h \in \mathcal{B}_{0}$.

\begin{proof}
  Note that $A_{\Theta}$ is maximally dissipative if $\hat{\Theta} \coloneq -j_{-} \Theta j_{+}$ is maximally dissipative.
  Hence, it is sufficient to show (the stronger assertion) that $\hat{\Theta}$ is self-adjoint and negative.

  Since $j_{+}$ and $j_{-}$ are continuous embeddings we have $\hat{\Theta} \in \Lb(\mathcal{B}_{+},\mathcal{B}_{-})$. Moreover, $j_{+}\adjun = j_{-}$ and $j_{-}\adjun = j_{+}$.\footnote{We regard the adjoint w.r.t.\ the dual pair $(\mathcal{B}_{+},\mathcal{B}_{-})$, i.e., if $A \colon \mathcal{B}_{+} \to H$, then $A\adjun\colon H \to \mathcal{B}_{-}$ and analogously if $\mathcal{B}_{+}$ is the codomain. It is the reverse way around for $\mathcal{B}_{-}$.}
  This implies
  \begin{align*}
    \Re \dualprod{\hat{\Theta} x}{x}_{\mathcal{B}_{-},\mathcal{B}_{+}}
    = -\Re \scprod{\Theta j_{+}x}{j_{+}x}_{\mathcal{B}_{0}} \leq 0.
  \end{align*}
  Furthermore we have
  \begin{equation*}
    \hat{\Theta}\adjun = - (j_{-} \Theta j_{+})\adjun
    = - j_{+}\adjun \Theta\adjun j_{-}\adjun
    = - j_{-} \Theta j_{+}
    = \hat{\Theta},
  \end{equation*}
  which implies that $\hat{\Theta}$ is self-adjoint and therefore maximally dissipative. Hence, $\hat{\Theta}^{-1}$ is maximally dissipative (in the sense of linear relations). Finally, we can apply \cite[Prop.~2.4.10]{Sk-Phd2021}, which deduces the maximal dissipativity of $A_{\Theta}$ from the maximal dissipativity of $\hat{\Theta}^{-1}$.\footnote{We point out that \cite{Sk-Phd2021} uses a different notation. The operator $A_{\Theta}$ corresponds to $A_{\hat{\Theta}^{-1}}$ in the notation of \cite{Sk-Phd2021}.}
\end{proof}


\section{Well-posedness}%
\label{sec:well-posedness}

In \cite{KurZwa2015} the existence of a boundary triple associated to the wave equation in the Dirac representation was shown.
In this section, we show that we can also associate a boundary triple to the wave equation in the Lagrange representation.
This will allow us to parameterize all maximally dissipative boundary conditions.

Recall the decomposition of $\partial\Omega$ into $\Gamma_{0}$, $\Gamma_{1}$, $\Gamma_{2}$ $\Gamma_{3}$ and $\Gamma_{4}$. Since the case where $\Gamma_{0}$ is equal to $\partial\Omega$ simply corresponds to the wave equation with Dirichlet boundary, which is well-studied, we exclude that case. In particular, we will see that the boundary conditions \eqref{eq:boundary-conditions} represent one of these maximally dissipative boundary conditions.
For the following, we only have to distinguish between $\Gamma_{0}$ and the combined other parts of the boundary.
Hence, let us abbreviate $\tilde{\Gamma}\coloneq \partial\Omega\setminus\cl{\Gamma_{0}}$, which is essentially the same as $\Gamma_{1} \cup \Gamma_{2} \cup \Gamma_{3} \cup \Gamma_{4}$.

\subsection*{Reformulation}
First we want to transfer the wave equation into the formalism for boundary triples. As state space we choose
\begin{equation*}
  X \coloneq \cH^{1}_{\Gamma_{0}}(\Omega) \times \Lp{2}(\Omega).
\end{equation*}
We use the equivalent norm for $\soboH^{1}(\Omega)$ given by the Friedrichs/Poincar\'e inequality \eqref{th:Poincare} and define
\begin{equation*}
  \scprod{x}{y}_{X} \coloneq \scprod[\big]{\tfrac{1}{\rho}x_{2}}{y_{2}}_{\Lp{2}(\Omega)} + \scprod{T \grad x_{1}}{\grad y_{1}}_{\Lp{2}(\Omega)} + \scprod{k_{1} \boundtr x_{1}}{\boundtr y_{1}}_{\Lp{2}(\tilde{\Gamma})}\text{.}
\end{equation*}
Equivalence to the canonical inner product on $X$ is assured by virtue of the Friedrichs/Poincar\'e inequality and the assumptions on $\rho$, $T$ and $k_{1}$. The inner product on $X$ is the sum of the energy inner product favored in port-Hamiltonian formulations plus the inner product on the boundary, which is necessary to prevent constant functions in the second argument to have vanishing norm. We arrive (informally) at the differential operator $A$ describing the wave equation \eqref{FullWE} by applying the classical substitutions $x_{1} = \rho \partial_{t} w$ and $x_{2} = w$ and obtain:
\begin{equation*}
  \partial_{t} \begin{pmatrix}x_{1} \\ x_{2}\end{pmatrix}
  =
  \underbrace{%
    \begin{pmatrix}
      0 & \idop  \\
      -\idop & 0
    \end{pmatrix}
    \begin{pmatrix}
      -\div T\grad & 0 \\
      0 & \frac{1}{\rho}
    \end{pmatrix}%
  }_{= \mathrlap{A}}
  \begin{pmatrix}x_{1} \\ x_{2} \end{pmatrix}
  +
  \begin{pmatrix}0 & 0 \\ -a & -b \frac{1}{\rho} \end{pmatrix}
  \begin{pmatrix}x_{1} \\ x_{2}\end{pmatrix}.
\end{equation*}
Formally we define:
\begin{equation}
  \label{eq:wave-diff-operator}
  A\colon X\supseteq \dom(A) \to  X, \quad
  \begin{pmatrix}x_{1}\\x_{2}\end{pmatrix} \mapsto \begin{pmatrix}\frac{1}{\rho}x_{2} \\ \div T\grad x_{1}\end{pmatrix}\,\text{,}
\end{equation}
where
\begin{equation*}
  \dom A \coloneq \dset*{\begin{pmatrix} x_{1} \\ x_{2} \end{pmatrix} \in X}{\tfrac{1}{\rho} x_{2} \in \cH^{1}_{\Gamma_{0}}(\Omega)\ \text{and}\ T\grad x_{1} \in \soboH(\div,\Omega)}
  \text{.}
\end{equation*}

\begin{lemma}
  $A$ is a closed operator.
\end{lemma}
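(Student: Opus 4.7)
The plan is to verify closedness directly from the definition: take a sequence $(x^{(n)})_{n \in \N} \subseteq \dom A$ with $x^{(n)} \to x$ in $X$ and $A x^{(n)} \to y$ in $X$, and deduce that $x \in \dom A$ with $A x = y$. Writing $x^{(n)} = \bigl(x_{1}^{(n)}, x_{2}^{(n)}\bigr)\transposed$ and analogously for $x, y$, convergence in the product space $X = \cH^{1}_{\Gamma_{0}}(\Omega) \times \Lp{2}(\Omega)$ immediately yields the componentwise convergences $x_{1}^{(n)} \to x_{1}$ in $\cH^{1}_{\Gamma_{0}}(\Omega)$ (so in particular $\grad x_{1}^{(n)} \to \grad x_{1}$ in $\Lp{2}(\Omega)^{d}$), $x_{2}^{(n)} \to x_{2}$ in $\Lp{2}(\Omega)$, $\tfrac{1}{\rho} x_{2}^{(n)} \to y_{1}$ in $\cH^{1}_{\Gamma_{0}}(\Omega)$, and $\div T \grad x_{1}^{(n)} \to y_{2}$ in $\Lp{2}(\Omega)$.

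Next, I would identify the two limits. Since $\rho$ is boundedly invertible on $\Lp{2}(\Omega)$ by assumption (A3), the operator $\tfrac{1}{\rho} \in \Lb(\Lp{2}(\Omega))$, and hence $\tfrac{1}{\rho} x_{2}^{(n)} \to \tfrac{1}{\rho} x_{2}$ in $\Lp{2}(\Omega)$. Combined with the continuous embedding $\cH^{1}_{\Gamma_{0}}(\Omega) \hookrightarrow \Lp{2}(\Omega)$ and uniqueness of limits, we obtain $y_{1} = \tfrac{1}{\rho} x_{2}$ and, crucially, $\tfrac{1}{\rho} x_{2} \in \cH^{1}_{\Gamma_{0}}(\Omega)$. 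Analogously, (A2) yields $T \in \Lb(\Lp{2}(\Omega;\C^{d}))$ so that $T \grad x_{1}^{(n)} \to T \grad x_{1}$ in $\Lp{2}(\Omega;\C^{d})$.

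Finally, I would invoke the closedness of the distributional divergence $\div$ as an operator on $\Lp{2}$: since $T \grad x_{1}^{(n)} \to T \grad x_{1}$ in $\Lp{2}(\Omega)^{d}$ while $\div T \grad x_{1}^{(n)} \to y_{2}$ in $\Lp{2}(\Omega)$, closedness yields $T \grad x_{1} \in \soboH(\div,\Omega)$ together with $\div T \grad x_{1} = y_{2}$. Thus $x \in \dom A$ and $A x = y$, proving closedness.

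There is no real obstacle here — the argument is essentially bookkeeping. The only subtle point is that closedness relies on two separate regularity upgrades (that $\tfrac{1}{\rho} x_{2}$ lives in $\cH^{1}_{\Gamma_{0}}(\Omega)$ and that $T \grad x_{1}$ lives in $\soboH(\div,\Omega)$), both of which are furnished automatically by the chosen topology on $X$ and by the closedness of the weak differential operators $\grad$ and $\div$ on $\Lp{2}$. If one wanted to make the argument even more structural, one could observe that $A$ is obtained by composing the bounded multiplication operators with blocks of $\begin{psmallmatrix} 0 & \grad \\ \div & 0 \end{psmallmatrix}$ (with appropriate domain), which is closed as an operator-matrix on $\Lp{2}(\Omega) \times \Lp{2}(\Omega)^{d}$, and then transporting this via the bounded multiplications $\tfrac{1}{\rho}$ and $T$.
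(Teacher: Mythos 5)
Your proof is correct and follows essentially the same route as the paper: componentwise convergence in $X$, boundedness of $T$ and $\tfrac{1}{\rho}$ to identify the limits, and closedness of the weak divergence to upgrade $T\grad x_{1}$ to $\soboH(\div,\Omega)$ with $\div T\grad x_{1}$ equal to the limit. The only difference is that you spell out the identification $y_{1}=\tfrac{1}{\rho}x_{2}$ via uniqueness of limits in $\Lp{2}(\Omega)$, which the paper's proof does implicitly; no gap.
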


\begin{proof}
  Let $(\begin{psmallmatrix} x_{n} \\ y_{n}\end{psmallmatrix})_{n\in\N}$ be a sequence in $\dom A$ that converges to $\begin{psmallmatrix}x \\ y\end{psmallmatrix} \in X$ w.r.t.\ $\norm{\argdot}_{X}$ such that also $(A \begin{psmallmatrix}x_{n} \\ y_{n}\end{psmallmatrix})_{n\in\N}$ converges to $\begin{psmallmatrix} v \\ w \end{psmallmatrix} \in X$ w.r.t.\ $\norm{\argdot}_{X}$. Note that $A \begin{psmallmatrix}x_{n} \\ y_{n}\end{psmallmatrix} = \begin{psmallmatrix}\frac{1}{\rho} y_{n} \\ \div T \grad x_{n}\end{psmallmatrix}$ implies that $(\frac{1}{\rho} y_{n})_{n}$ converges to $v$ w.r.t.\ $\norm{\argdot}_{\soboH^{1}(\Omega)}$ and therefore $v=\frac{1}{\rho} y$. Since $(x_{n})_{n}$ converges to $x$ w.r.t.\ $\norm{\argdot}_{\soboH^{1}(\Omega)}$, we conclude that $(T \grad x_{n})_{n}$ converges to $T \grad x$ w.r.t.\ $\norm{\argdot}_{\Lp{2}(\Omega)}$. Hence, the closedness of $\div$ implies $w = \div T \grad x$, which shows the closedness of $A$.
\end{proof}

$A$ is what we have called $A_{0}^{\ast}$ in \Cref{def:BdyTriple} for an (at this point still undefined) operator $A_{0}$.\footnote{In fact, we will pretermit the definition of $A_{0}$ entirely.} As outlined in \Cref{sec:preliminaries}, we want to attach boundary conditions to $A$ by identifying a suitable boundary triple and then define $A_{\Theta}$. For our boundary operators we define:
\begingroup
\renewcommand{\quad}{\mspace{10mu}}
\begin{equation}
  \label{eq:boundary-operators}
  B_{1}\colon
  \left\{
    \begin{array}{rcl}
      \dom A & \to & \soboH^{-\frac{1}{2}}(\tilde{\Gamma}), \\
      x & \mapsto & k_{1} \boundtr x_{1} + \normaltr T \grad x_{1},
    \end{array}
  \right.
  \mspace{-10mu}\quad\text{and}\quad
  B_{2}\colon
  \left\{
    \begin{array}{rcl}
      \dom A & \to & \cH^{\frac{1}{2}}(\tilde{\Gamma}), \\
      x & \mapsto & \boundtr \tfrac{1}{\rho} x_{2}.
    \end{array}
  \right.
\end{equation}
\endgroup
The ultimate goal of this section is to show that $\bigl(\bigl(\soboH^{-\frac{1}{2}}(\tilde{\Gamma}), \cH^{\frac{1}{2}}(\tilde{\Gamma})\bigr), B_{1}, B_{2}\bigr)$ is a boundary triple for $A$. This will allow us to attach the boundary conditions \eqref{eq:boundary-conditions} to $A$ via a relation $\Theta$ essentially given by $k_{2}$ as described in \Cref{subsec:BdyTriples}. From there, well-posedness of problem \eqref{FullWE} will easily follow. According to \Cref{def:BdyTriple}, for a boundary triple we have to verify:
\begin{enumerate}[leftmargin=5ex]
  \item $B \coloneq \begin{psmallmatrix} B_{1} \\ B_{2}\end{psmallmatrix}$ is onto.
  \item $A$ and $B$ satisfy an abstract Green identity.
\end{enumerate}
To verify surjectivity of $B$ it suffices to show that the individual $B_{1}$ and $B_{2}$ are surjective, since they act on different components of $x = \begin{psmallmatrix} x_{1} \\ x_{2}\end{psmallmatrix}$. We first establish a useful result about solutions of the (generalized) Dirichlet problem with boundary condition provided by $B_{1}$. The following proposition is a slight modification of \cite[Thm.~5.5]{ToChTo2019}:

\begin{proposition}
  \label{th:B1-surjective-on-ker-divTgrad}
  For any $g \in \soboH^{-\frac{1}{2}}(\tilde{\Gamma})$ there exists $w \in \dset{f \in \cH^{1}_{\Gamma_{0}}(\Omega)}{T \grad f \in \soboH(\div,\Omega)}$ such that
  \begin{alignat*}{3}
    \div T \grad w &= 0 &\quad \text{in}\ \Omega, \\
    \normaltr T \grad w + k_{1} \boundtr w &= g &\quad \text{on}\ \tilde{\Gamma}.
  \end{alignat*}
  In particular, $w \coloneq \boundtr\adjun g$ given by the adjoint
  \begin{equation*}
    \boundtr\adjun \colon \soboH^{-\frac{1}{2}}(\tilde{\Gamma}) \to \cH^{1}_{\Gamma_{0}}(\Omega)
    \quad\text{of}\quad
    \boundtr \colon \cH^{1}_{\Gamma_{0}}(\Omega) \to \cH^{\frac{1}{2}}(\tilde{\Gamma}),
  \end{equation*}
  where $\cH^{1}_{\Gamma_{0}}(\Omega)$ is equipped with
  \begin{equation*}
    \scprod{f}{g}_{\grad,\boundtr} \coloneq \scprod{T \grad f}{\grad g}_{\Lp{2}(\Omega)} + \scprod{k_{1} \boundtr f}{\boundtr g}_{\Lp{2}(\tilde{\Gamma})},
  \end{equation*}
  is a solution.
\end{proposition}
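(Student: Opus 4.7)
The plan is to produce $w$ via the Riesz representation theorem on $\cH^{1}_{\Gamma_{0}}(\Omega)$ equipped with $\scprod{\cdot}{\cdot}_{\grad,\boundtr}$. By the assumptions on $T$ and $k_{1}$, together with the Friedrichs/Poincar\'{e} inequality (applicable because $k_{1}\neq 0$ or $\Gamma_{0}\neq\emptyset$), this pairing is equivalent to the canonical $\soboH^{1}$-inner product on $\cH^{1}_{\Gamma_{0}}(\Omega)$, so the space is a Hilbert space. For fixed $g\in\soboH^{-\frac{1}{2}}(\tilde{\Gamma})$ the functional $\phi\mapsto \dualprod{g}{\boundtr\phi}_{\mp\frac{1}{2}}$ is continuous on $\cH^{1}_{\Gamma_{0}}(\Omega)$ by continuity of $\boundtr\colon\cH^{1}_{\Gamma_{0}}(\Omega)\to\cH^{\frac{1}{2}}(\tilde{\Gamma})$. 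Riesz then produces a unique $w\in\cH^{1}_{\Gamma_{0}}(\Omega)$ with
\begin{equation*}
  \scprod{T\grad w}{\grad\phi}_{\Lp{2}(\Omega)} + \scprod{k_{1}\boundtr w}{\boundtr\phi}_{\Lp{2}(\tilde{\Gamma})} = \dualprod{g}{\boundtr\phi}_{\mp\frac{1}{2}} \qquad\forall\phi\in\cH^{1}_{\Gamma_{0}}(\Omega),
\end{equation*}
which is precisely the defining relation $w=\boundtr\adjun g$ for the adjoint associated with $\scprod{\cdot}{\cdot}_{\grad,\boundtr}$.

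Next I would extract the PDE by restricting the test functions to $\phi\in \Cc(\Omega)\subseteq\cH^{1}_{\Gamma_{0}}(\Omega)$, which have $\boundtr\phi=0$. The boundary terms drop out and only $\scprod{T\grad w}{\grad\phi}_{\Lp{2}(\Omega)}=0$ survives, i.e.\ $\div T\grad w=0$ in the distributional sense. Since $0\in\Lp{2}(\Omega)$, this shows $T\grad w\in\soboH(\div,\Omega)$, placing $w$ in the regularity class claimed in the proposition.

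With $T\grad w\in\soboH(\div,\Omega)$ secured, I would invoke the integration by parts rule \Cref{th:PI} to rewrite $\scprod{T\grad w}{\grad\phi}_{\Lp{2}(\Omega)}$ purely as the boundary pairing $\dualprod{\normaltr T\grad w}{\boundtr\phi}_{\mp\frac{1}{2}}$ (the interior term $\scprod{\div T\grad w}{\phi}$ vanishes by the previous step). Reading $\scprod{k_{1}\boundtr w}{\boundtr\phi}_{\Lp{2}(\tilde{\Gamma})}$ as a dual pairing through the Gelfand-triple embedding $\Lp{2}(\tilde{\Gamma})\hookrightarrow\soboH^{-\frac{1}{2}}(\tilde{\Gamma})$, the Riesz identity becomes
\begin{equation*}
  \dualprod{\normaltr T\grad w + k_{1}\boundtr w - g}{\boundtr\phi}_{\mp\frac{1}{2}} = 0 \qquad\forall\phi\in\cH^{1}_{\Gamma_{0}}(\Omega).
\end{equation*}
Since $\boundtr$ maps $\cH^{1}_{\Gamma_{0}}(\Omega)$ onto $\cH^{\frac{1}{2}}(\tilde{\Gamma})$ by definition of the latter, the pairing annihilates every element of $\cH^{\frac{1}{2}}(\tilde{\Gamma})$, forcing $\normaltr T\grad w+k_{1}\boundtr w=g$ as elements of $\soboH^{-\frac{1}{2}}(\tilde{\Gamma})$.

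The main delicate point is the bookkeeping of the mixed pairings in the last step: $k_{1}\boundtr w$ genuinely sits in $\Lp{2}(\tilde{\Gamma})$, whereas $\normaltr T\grad w$ lives only in $\soboH^{-\frac{1}{2}}(\tilde{\Gamma})$, so the sum has to be interpreted in the latter space via the Gelfand triple, and only then does the surjectivity of $\boundtr$ let me conclude the boundary identity. Beyond this, the argument is a clean Riesz/Lax--Milgram exercise, with the equivalence of $\scprod{\cdot}{\cdot}_{\grad,\boundtr}$ and $\norm{\cdot}_{\soboH^{1}(\Omega)}$ depending essentially on assumption~\ref{item:assumption-boundary-operators}.
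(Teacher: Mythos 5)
Your proposal is correct and follows essentially the same route as the paper: constructing $w$ via Riesz representation for the functional $\phi\mapsto\dualprod{g}{\boundtr\phi}_{\mp\frac{1}{2}}$ is exactly the paper's $w=\boundtr\adjun g$, and the subsequent steps (testing with $\Cc(\Omega)$ to get $\div T\grad w=0$, then integrating by parts to identify the boundary condition in $\soboH^{-\frac{1}{2}}(\tilde{\Gamma})$) coincide with the paper's argument.
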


\begin{proof}
  Note that $\boundtr\colon \cH^{1}_{\Gamma_{0}}(\Omega) \to \cH^{\frac{1}{2}}(\tilde{\Gamma})$ is bounded. Thus, its adjoint is defined on all of $\soboH^{-\frac{1}{2}}(\tilde{\Gamma})$ and is bounded. Moreover, $\boundtr\adjun g \in \cH^{1}_{\Gamma_{0}}(\Omega)$ as this is the range of $\boundtr\adjun$. By definition of the adjoint, for $v \in \cH^{1}_{\Gamma_{0}}(\Omega)$  we have
  \begin{align*}
    \dualprod{g}{\boundtr v}_{\mp \frac{1}{2}}
    = \scprod{\boundtr\adjun g}{v}_{\grad,\boundtr}
    = \scprod{T \grad \boundtr\adjun g}{\grad v}_{\Lp{2}(\Omega)} + \scprod{k_{1} \boundtr \boundtr\adjun g}{\boundtr v}_{\Lp{2}(\tilde{\Gamma})}.
  \end{align*}
  If we choose $v \in \Cc(\Omega)$, we obtain
  \begin{equation*}
    0 = \scprod{T\grad \boundtr\adjun g}{\grad v}_{\Lp{2}(\Omega)},
  \end{equation*}
  which implies $T \grad \boundtr\adjun g \in \soboH(\div,\Omega)$ and $\div T \grad \boundtr\adjun g = 0$. Hence, for arbitrary $v \in \cH^{1}_{\Gamma_{0}}(\Omega)$ integration by parts gives
  \begin{align*}
    \dualprod{g}{\boundtr v}_{\mp \frac{1}{2}}
    &= \dualprod{\normaltr T \grad \boundtr\adjun g}{\boundtr v}_{\mp \frac{1}{2}}
      + \dualprod{k_{1} \boundtr \boundtr\adjun g}{\boundtr v}_{\mp \frac{1}{2}}
    \\
    &= \dualprod{\normaltr T \grad \boundtr\adjun g + k_{1} \boundtr \boundtr\adjun g}{\boundtr v}_{\mp \frac{1}{2}}.
  \end{align*}
  Hence $w \coloneq \boundtr\adjun g$ is a solution.
\end{proof}

\begin{corollary}
  \label{th:B-surjective}
  $B$ is surjective.
\end{corollary}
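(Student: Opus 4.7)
The plan is to exploit that $B$ acts in a block-decoupled fashion: $B_{1}$ reads only the first component $x_{1}$, and $B_{2}$ reads only the second component $x_{2}$. So for an arbitrary target $(g,h) \in \soboH^{-\frac{1}{2}}(\tilde{\Gamma}) \times \cH^{\frac{1}{2}}(\tilde{\Gamma})$, I would produce a preimage of $(g,0)$ of the form $(x_{1},0)$ and a preimage of $(0,h)$ of the form $(0,x_{2})$, then add them by linearity.

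For the first preimage I would invoke \Cref{th:B1-surjective-on-ker-divTgrad} directly: setting $x_{1} \coloneqq \boundtr\adjun g \in \cH^{1}_{\Gamma_{0}}(\Omega)$, one obtains $T \grad x_{1} \in \soboH(\div,\Omega)$ and $B_{1} \begin{psmallmatrix} x_{1} \\ 0 \end{psmallmatrix} = \normaltr T \grad x_{1} + k_{1} \boundtr x_{1} = g$, while membership $(x_{1},0) \in \dom A$ holds since $\tfrac{1}{\rho} \cdot 0 = 0 \in \cH^{1}_{\Gamma_{0}}(\Omega)$ trivially. Obviously $B_{2}\begin{psmallmatrix} x_{1} \\ 0 \end{psmallmatrix} = 0$.

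For the second preimage I would use the very definition $\cH^{\frac{1}{2}}(\tilde{\Gamma}) = \boundtr \cH^{1}_{\Gamma_{0}}(\Omega)$: pick $u \in \cH^{1}_{\Gamma_{0}}(\Omega)$ with $\boundtr u = h$ and set $x_{2} \coloneqq \rho u$. Boundedness of $\rho$ gives $x_{2} \in \Lp{2}(\Omega)$, and $\tfrac{1}{\rho} x_{2} = u \in \cH^{1}_{\Gamma_{0}}(\Omega)$. The remaining condition $T \grad 0 \in \soboH(\div,\Omega)$ is trivial, so $(0, x_{2}) \in \dom A$ and $B_{2} \begin{psmallmatrix} 0 \\ x_{2} \end{psmallmatrix} = \boundtr u = h$ and $B_{1}\begin{psmallmatrix} 0 \\ x_{2} \end{psmallmatrix} = 0$.

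Summing yields $(x_{1}, x_{2}) \in \dom A$ with $B(x_{1}, x_{2}) = (g,h)$, proving surjectivity. The only nontrivial ingredient is the construction behind the surjectivity of $B_{1}$, which has already been carried out in \Cref{th:B1-surjective-on-ker-divTgrad}; the $B_{2}$ part is tautological from the definition of $\cH^{\frac{1}{2}}(\tilde{\Gamma})$ and the block decoupling is pure bookkeeping, so I expect no real obstacle.
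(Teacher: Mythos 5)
Your proposal is correct and follows essentially the same route as the paper: surjectivity of $B_{1}$ via \Cref{th:B1-surjective-on-ker-divTgrad}, surjectivity of $B_{2}$ via surjectivity of the Dirichlet trace, combined using the fact that the two boundary maps act on different components; you merely spell out the bookkeeping (including $x_{2}=\rho u$) that the paper leaves implicit.
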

\begin{proof}
  \Cref{th:B1-surjective-on-ker-divTgrad} shows that $B_{1}$ is surjective and $B_{2}$ is surjective as a consequence of the surjectivity of the Dirichlet trace operator. Since $B_{1}$ and $B_{2}$ act on different components, we obtain the surjectivity of $B$.
\end{proof}

For the second step we observe that integration by parts for the Dirichlet and normal trace operators (\Cref{th:PI}) gives rise to an abstract Green identity:

\begin{proposition}[Green identity]
  \label{th:green-identity}
  For all $x,y \in \dom A$ the following identity holds:
  \begin{equation*}
    \scprod{A x}{y} + \scprod{x}{Ay} = \scprod{B_{1}x}{B_{2} y} + \scprod{B_{2}x}{B_{1}y}.
  \end{equation*}
\end{proposition}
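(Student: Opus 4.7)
The plan is to unfold both inner products on the left using the definitions of $A$ and of $\scprod{\argdot}{\argdot}_{X}$, apply the integration-by-parts formula \Cref{th:PI} to the two contributions that contain $\div T\grad$, and identify the surviving boundary pairings with $\scprod{B_{1}x}{B_{2}y} + \scprod{B_{2}x}{B_{1}y}$ via the definitions in \eqref{eq:boundary-operators}.

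\textbf{Step 1 (Expansion).} Using \eqref{eq:wave-diff-operator} and the definition of $\scprod{\argdot}{\argdot}_{X}$, I would expand $\scprod{Ax}{y}_{X} + \scprod{x}{Ay}_{X}$ into six summands grouped into three pairs: (i) the two $\Lp{2}(\Omega)$-terms $\scprod{\tfrac{1}{\rho}\div T\grad x_{1}}{y_{2}}$ and $\scprod{\tfrac{1}{\rho}x_{2}}{\div T\grad y_{1}}$; (ii) the two energy-gradient terms $\scprod{T\grad\tfrac{1}{\rho}x_{2}}{\grad y_{1}}$ and $\scprod{T\grad x_{1}}{\grad\tfrac{1}{\rho}y_{2}}$; and (iii) the two boundary terms $\scprod{k_{1}\boundtr\tfrac{1}{\rho}x_{2}}{\boundtr y_{1}}_{\Lp{2}(\tilde{\Gamma})}$ and $\scprod{k_{1}\boundtr x_{1}}{\boundtr\tfrac{1}{\rho}y_{2}}_{\Lp{2}(\tilde{\Gamma})}$.

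\textbf{Step 2 (Integration by parts and cancellation).} In pair (i) I would use self-adjointness of the positive multiplier $\tfrac{1}{\rho}$ to move it onto $y_{2}$ (respectively take the complex conjugate of the pairing for the symmetric term), so that $\div T\grad x_{1}$ is paired with $\tfrac{1}{\rho}y_{2}$. Since $x,y \in \dom A$ guarantees $T\grad x_{1} \in \soboH(\div,\Omega)$ and $\tfrac{1}{\rho}y_{2} \in \cH^{1}_{\Gamma_{0}}(\Omega)$, \Cref{th:PI} applies. After invoking self-adjointness of $T$ to realign arguments, the volume contributions produced by integration by parts cancel pair~(ii) exactly, leaving only the $\soboH^{-\frac{1}{2}}/\cH^{\frac{1}{2}}$-pairings $\dualprod{\normaltr T\grad x_{1}}{\boundtr\tfrac{1}{\rho}y_{2}}_{\mp\frac{1}{2}}$ and $\dualprod{\boundtr\tfrac{1}{\rho}x_{2}}{\normaltr T\grad y_{1}}_{\pm\frac{1}{2}}$, alongside the pair~(iii) boundary terms.

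\textbf{Step 3 (Reassembly).} Because $k_{1}\boundtr x_{1}, k_{1}\boundtr y_{1} \in \Lp{2}(\tilde{\Gamma})$, the pair~(iii) terms agree with $\dualprod{k_{1}\boundtr x_{1}}{\boundtr\tfrac{1}{\rho}y_{2}}_{\mp\frac{1}{2}}$ and $\dualprod{\boundtr\tfrac{1}{\rho}x_{2}}{k_{1}\boundtr y_{1}}_{\pm\frac{1}{2}}$, since the Gelfand-triple pairing extends the $\Lp{2}(\tilde{\Gamma})$ inner product. Combining these with the $\normaltr T\grad$-pairings from Step~2 by linearity, and recognising the resulting sums as $\dualprod{k_{1}\boundtr x_{1} + \normaltr T\grad x_{1}}{\boundtr\tfrac{1}{\rho}y_{2}}_{\mp\frac{1}{2}} = \scprod{B_{1}x}{B_{2}y}$ and its symmetric counterpart $\scprod{B_{2}x}{B_{1}y}$, completes the identity.

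\textbf{Main obstacle.} The argument is essentially bookkeeping once three ingredients are in place: self-adjointness of the multipliers $\rho$, $T$, $k_{1}$ (so that factors can be shifted between the two arguments), the regularity built into $\dom A$ (so that \Cref{th:PI} applies on both sides), and the Gelfand-triple extension property of the boundary pairing (so that the $k_{1}$-products embed into $\dualprod{\argdot}{\argdot}_{\mp\frac{1}{2}}$). The one genuinely delicate step is sign and complex-conjugation tracking for the ``mirrored'' contribution $\scprod{\tfrac{1}{\rho}x_{2}}{\div T\grad y_{1}}$, where the divergence sits on the right argument; conjugating the pairing before invoking \Cref{th:PI} produces the $\dualprod{\argdot}{\argdot}_{\pm\frac{1}{2}}$ pairing of the correct form.
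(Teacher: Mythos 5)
Your proposal is correct and follows essentially the same route as the paper's proof: expand the $X$-inner product into the six terms, move the self-adjoint multipliers $\tfrac{1}{\rho}$, $T$, $k_{1}$ across, integrate by parts on the $\div T\grad$ contributions so the volume terms cancel, and absorb the $k_{1}$-boundary terms into the $\soboH^{-\frac{1}{2}}/\cH^{\frac{1}{2}}$ pairings to recognise $\dualprod{B_{1}x}{B_{2}y}_{\mp\frac{1}{2}}+\dualprod{B_{2}x}{B_{1}y}_{\pm\frac{1}{2}}$. Your handling of the mirrored term via conjugation matches the paper's definition of the $\pm\frac{1}{2}$ pairing, so no gap remains.
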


\begin{proof}
  Let $x,y\in \dom(A)$. We can calculate:
    \begin{align*}
    \MoveEqLeft
    \scprod{A x}{y}_{X}+\scprod{x}{A y}_{X} \\
    &=
      \scprod[\bigg]{\begin{pmatrix}\tfrac{1}{\rho}x_{2} \\ \div T\grad x_{1}\end{pmatrix}} {\begin{pmatrix}y_{1}\\y_{2}\end{pmatrix}}_{X}
      + \scprod[\bigg]{\begin{pmatrix}x_{1}\\x_{2}\end{pmatrix}}
      {\begin{pmatrix}\tfrac{1}{\rho}y_{2} \\ \div T\grad y_{1} \end{pmatrix}}_{X}\\
    \intertext{By the definition of the inner product in $X$ we have:}
    &=
      \scprod{\tfrac{1}{\rho}\div T\grad x_{1}}{y_{2}}_{\Lp{2}(\Omega)}
      + \scprod{T\grad \tfrac{1}{\rho}x_{2}}{\grad y_{1}}_{\Lp{2}(\Omega)}
      + \scprod{k_{1}\boundtr \tfrac{1}{\rho}x_{2}}{\boundtr y_{1}}_{\Lp{2}(\tilde{\Gamma})}
      \\
    &\quad
      + \scprod{x_{2}}{\tfrac{1}{\rho}\div T\grad y_{1}}_{\Lp{2}(\Omega)}
      + \scprod{\grad x_{1}}{T\grad\tfrac{1}{\rho}y_{2}}_{\Lp{2}(\Omega)}
      + \scprod{\boundtr x_{1}}{k_{1}\boundtr \tfrac{1}{\rho}y_{2}}_{\Lp{2}(\tilde{\Gamma})}
      \\[0.5ex]
    \intertext{Applying integration by parts on the $\div T \grad$ operators gives {(we suppress the index for the inner product in the following and just distinguish between inner products and dual pairings)}:}
    &=
      - \scprod{T\grad x_{1}}{\grad\tfrac{1}{\rho}y_{2}}
      + \dualprod{\normaltr T\grad x_{1}}{\boundtr \tfrac{1}{\rho} y_{2}}_{\mp \frac{1}{2}}
      + \scprod{T\grad \tfrac{1}{\rho}x_{2}}{\grad y_{1}}
      + \scprod{k_{1}\boundtr \tfrac{1}{\rho}x_{2}}{\boundtr y_{1}}
    \\
    &\quad
      - \scprod{\grad\tfrac{1}{\rho}x_{2}}{T\grad y_{1}}
      + \dualprod{\boundtr \tfrac{1}{\rho} x_{2}}{\normaltr T\grad y_{1}}_{\pm \frac{1}{2}}
      + \scprod{\grad x_{1}}{T\grad\tfrac{1}{\rho}y_{2}}
      + \scprod{\boundtr x_{1}}{k_{1} \boundtr \tfrac{1}{\rho}y_{2}}
      \\[0.5ex]
    \intertext{This simplifies to}
    &= \dualprod{\normaltr T\grad x_{1}}{\boundtr \tfrac{1}{\rho} y_{2}}_{\mp \frac{1}{2}}
      + \scprod{k_{1} \boundtr \tfrac{1}{\rho}x_{2}}{\boundtr y_{1}}
    \\
    &\quad
      + \dualprod{\boundtr \tfrac{1}{\rho} x_{2}}{\normaltr T\grad y_{1}}_{\pm \frac{1}{2}}
      + \scprod{\boundtr x_{1}}{k_{1}\boundtr \tfrac{1}{\rho}y_{2}}
    \\[0.5ex]
    &= \dualprod{k_{1}\boundtr x_{1} + \normaltr T\grad x_{1}}{\boundtr \tfrac{1}{\rho} y_{2}}_{\mp \frac{1}{2}}
      + \dualprod{\boundtr \tfrac{1}{\rho} x_{2}}{k_{1} \boundtr y_{1} + \normaltr T\grad y_{1}}_{\pm \frac{1}{2}}
    \\[0.5ex]
    &= \dualprod{B_{1}x}{B_{2} y}_{\mp \frac{1}{2}} + \dualprod{B_{2}x}{B_{1}y}_{\pm \frac{1}{2}}.
      \tag*{\qedhere}
  \end{align*}
\end{proof}

To avoid having to define $A_{0}$ (from \Cref{def:BdyTriple}) itself and having to verify that $A_{0}^{\ast}=A$, we will simply prove $A^{\ast} \subseteq - A$. This way, $A_{0} \coloneq - A^{\ast}$ has all necessary properties of $A_{0}$, i.e., skew-symmetry, dense domain and closedness.\footnote{Alternatively, one can set $A_{0}\coloneq -A$ with $\dom A_{0}\coloneq \ker B_{1} \cap \ker B_{2}$, cf.~\cite[Lem.~2.4.5]{Sk-Phd2021}. However, this requires careful handling to avoid circular arguments.}

\begin{proposition}
  $A^{\ast} \subseteq - A$ and $A$ is densely defined.
\end{proposition}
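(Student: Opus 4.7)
The plan is to establish the two claims separately: density of $\dom A$ in $X$ and the adjoint inclusion $A^{\ast} \subseteq -A$.

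For density, I would exhibit a product set $D_{1} \times D_{2} \subseteq \dom A$ dense in $X$. Take $D_{2} \coloneqq \rho\,\Cc(\Omega)$; bounded invertibility of $\rho$ makes $D_{2}$ dense in $\Lp{2}(\Omega)$, and every $x_{2} \in D_{2}$ satisfies $\tfrac{1}{\rho} x_{2} \in \Cc(\Omega) \subseteq \cH^{1}_{\Gamma_{0}}(\Omega)$. For $D_{1} \coloneqq \dset{x_{1} \in \cH^{1}_{\Gamma_{0}}(\Omega)}{T\grad x_{1} \in \soboH(\div,\Omega)}$, density in $\cH^{1}_{\Gamma_{0}}(\Omega)$ follows from elliptic theory: the form $a(u,v) \coloneqq \scprod{T\grad u}{\grad v}_{\Lp{2}} + \scprod{k_{1}\boundtr u}{\boundtr v}_{\Lp{2}(\tilde{\Gamma})}$ is coercive on $\cH^{1}_{\Gamma_{0}}(\Omega)$ (Friedrichs/Poincar\'e), so the associated self-adjoint operator on $\Lp{2}(\Omega)$---namely $-\div T\grad$ with Dirichlet boundary on $\Gamma_{0}$ and Robin boundary $k_{1}\boundtr u + \normaltr T\grad u = 0$ on $\tilde{\Gamma}$---has operator-theoretic domain $D_{\mathrm{Rob}} \subseteq D_{1}$, and $D_{\mathrm{Rob}}$ is dense in the form space $\cH^{1}_{\Gamma_{0}}(\Omega)$.

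For $A^{\ast} \subseteq -A$, let $y \in \dom A^{\ast}$ with $A^{\ast} y = z \in X$. The plan is to test the defining identity $\scprod{Ax}{y}_{X} = \scprod{x}{z}_{X}$ against two families of $x \in \dom A$ to read off the two conditions defining $\dom A$. First, pick $x = \begin{psmallmatrix}0 \\ x_{2}\end{psmallmatrix}$ with $\phi \coloneqq \tfrac{1}{\rho} x_{2} \in \Cc(\Omega)$. Then $Ax = \begin{psmallmatrix}\phi \\ 0\end{psmallmatrix}$, every boundary term in the $X$-inner product vanishes, and the identity reduces, via self-adjointness of $T$ and $\tfrac{1}{\rho}$, to $\scprod{\grad \phi}{T\grad y_{1}}_{\Lp{2}} = \scprod{\phi}{z_{2}}_{\Lp{2}}$ for every $\phi \in \Cc(\Omega)$. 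This is the distributional identity $-\div T\grad y_{1} = z_{2}$, so $T\grad y_{1} \in \soboH(\div,\Omega)$ with $z_{2} = -\div T\grad y_{1}$.

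Second, pick $x = \begin{psmallmatrix}x_{1} \\ 0\end{psmallmatrix}$ with $x_{1} \in D_{\mathrm{Rob}}$, so that $B_{1}x = 0$. Applying integration by parts (\Cref{th:PI}) to $\scprod{T\grad x_{1}}{\grad z_{1}}_{\Lp{2}}$ moves the differentiation onto $x_{1}$; the $\Gamma_{0}$-portion of the boundary term vanishes because $\boundtr z_{1}\big\vert_{\Gamma_{0}} = 0$, and the $\tilde{\Gamma}$-portion combines with the $k_{1}$-contribution in the $X$-inner product into $\dualprod{B_{1}x}{\boundtr z_{1}}_{\mp \frac{1}{2}} = 0$. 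The identity then collapses to $\scprod{\div T\grad x_{1}}{\tfrac{1}{\rho} y_{2} + z_{1}}_{\Lp{2}} = 0$ for all $x_{1} \in D_{\mathrm{Rob}}$. Lax--Milgram applied to $a$ shows that $-\div T\grad \colon D_{\mathrm{Rob}} \to \Lp{2}(\Omega)$ is a bijection, so $\tfrac{1}{\rho} y_{2} + z_{1} = 0$. Since $z_{1} \in \cH^{1}_{\Gamma_{0}}(\Omega)$ as the first coordinate of $z \in X$, we infer $\tfrac{1}{\rho} y_{2} \in \cH^{1}_{\Gamma_{0}}(\Omega)$; combined with the previous step, $y \in \dom A$ and $A^{\ast} y = z = -Ay$.

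The main obstacle is the second step: the test family must simultaneously satisfy $B_{1}x = 0$ and have $\{\div T\grad x_{1}\}$ dense in $\Lp{2}(\Omega)$. Both properties hinge on Lax--Milgram for the coercive form $a$, which is the only non-elementary ingredient beyond integration by parts and self-adjointness of $T$ and $\rho$.
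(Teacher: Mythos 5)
Your proof is correct, but it takes a genuinely different route from the paper's. The paper does not prove density first: it works with $A\adjun$ as a linear relation, establishes the inclusion $A\adjun \subseteq -A$, and only then reads off density from the fact that the adjoint is single-valued; you instead establish density up front (via the form-core property of the Robin realization of $-\div T\grad$), which is what legitimizes treating $A\adjun$ as an operator from the start. For the inclusion itself, the first test family ($x_{1}=0$, $\tfrac{1}{\rho}x_{2}\in\Cc(\Omega)$) coincides with the paper's, but the second differs: the paper argues in two stages, first using the lifting $x_{1}=\boundtr\adjun g$ from \Cref{th:B1-surjective-on-ker-divTgrad} (solutions of $\div T\grad x_{1}=0$ with prescribed $B_{1}$-data) to conclude $\boundtr z_{1}=0$, and then the Helmholtz decomposition of \Cref{th:Helmholtz} to get surjectivity of the Dirichlet realization $\div T\cgrad$ and hence $z_{1}=-\tfrac{1}{\rho}y_{2}$; you use a single family, the operator domain $D_{\mathrm{Rob}}$ of the Robin realization, whose elements satisfy $B_{1}x=0$ so the boundary terms cancel without knowing $\boundtr z_{1}=0$, and whose images $\div T\grad x_{1}$ exhaust $\Lp{2}(\Omega)$ by Lax--Milgram. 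Your route is shorter and bypasses both the trace-adjoint lifting and the Helmholtz decomposition, at the price of importing the representation theorem for coercive forms (the operator domain is a form core) twice --- for density and for surjectivity of the Robin realization --- whereas the paper reuses machinery it needs anyway for surjectivity of $B$ and avoids assuming density altogether through the linear-relation formulation. One small remark: your cancellation of the boundary terms into $\dualprod{B_{1}x}{\boundtr z_{1}}_{\mp\frac{1}{2}}$ uses the standard sign $\scprod{\div F}{g}=-\scprod{F}{\grad g}+\dualprod{\normaltr F}{\boundtr g}_{\mp\frac{1}{2}}$, which is the convention actually applied in the paper's proof of \Cref{th:green-identity} (the displayed statement of \Cref{th:PI} omits this minus sign), so your computation is consistent with the intended formula and this is not a gap.
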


Note that the following proof must be formulated in the language of linear relations, since we do not know a priori whether the adjoint of $A$ is single-valued---which is equivalent to $A$ being densely defined.
This has the useful side effect that the density of $A$'s domain follows automatically.

\begin{proof}
  Let $(y,z) \in A\adjun$ (where $y = \begin{psmallmatrix} y_{1} \\ y_{2}\end{psmallmatrix} \in X$ and $z = \begin{psmallmatrix} z_{1} \\ z_{2} \end{psmallmatrix} \in X$). Then we have for $x=\begin{psmallmatrix}x_{1}\\x_{2}\end{psmallmatrix} \in \dom A$
  \begin{equation*}
    \scprod{Ax}{y}_{X}
    = \scprod{x}{z}_{X},
  \end{equation*}
  or equivalently
  \begin{multline}
    \label{eq:adjoint-of-A-equation}
    \scprod{\tfrac{1}{\rho}\div T \grad x_{1}}{y_{2}}_{\Lp{2}(\Omega)}
    + \scprod{T\grad \tfrac{1}{\rho} x_{2}}{\grad y_{1}}_{\Lp{2}(\Omega)} + \scprod{k_{1} \boundtr \tfrac{1}{\rho}x_{2}}{\boundtr y_{1}}_{\Lp{2}(\tilde{\Gamma})}
    \\
    = \scprod{\tfrac{1}{\rho}x_{2}}{z_{2}}_{\Lp{2}(\Omega)} + \scprod{T \grad x_{1}}{\grad z_{1}}_{\Lp{2}(\Omega)}
    + \scprod{k_{1} \boundtr x_{1}}{\boundtr z_{1}}_{\Lp{2}(\tilde{\Gamma})}.
  \end{multline}
  Let $x_{2} \in \Cc(\Omega)$. Then we choose $x = \begin{psmallmatrix} 0 \\ \rho x_{2}\end{psmallmatrix}$, which is in $\dom A$, and obtain
  \begin{equation*}
    \scprod{\grad x_{2}}{T\grad y_{1}}_{\Lp{2}(\Omega)}
    = \scprod{x_{2}}{z_{2}}_{\Lp{2}(\Omega)}.
  \end{equation*}
  This holds true for all $x_{2}\in \Cc(\Omega)$; hence $T\grad y_{1} \in \soboH(\div,\Omega)$ and $z_{2} = -\div T \grad y_{1}$.

  For arbitrary $g \in \soboH^{-\frac{1}{2}}(\tilde{\Gamma})$ we let $x_{1} = \boundtr\adjun g$.  Appealing to \Cref{th:B1-surjective-on-ker-divTgrad} we conclude $B_{1} \begin{psmallmatrix} x_{1} \\ 0\end{psmallmatrix} = g$ and $\div T \grad x_{1} = 0$. This assures $x = \begin{psmallmatrix}x_{1} \\ 0\end{psmallmatrix} \in \dom A$, which allows us to calculate
  \begin{align*}
    0 = \scprod{\tfrac{1}{\rho}\,\smash[b]{\underbrace{\div T \grad x_{1}}_{=\mathrlap{0}}}}{y_{2}}_{\Lp{2}(\Omega)}
    &\stackrel{\mathclap{\eqref{eq:adjoint-of-A-equation}}}{=}
      \scprod{T \grad x_{1}}{\grad z_{1}}_{\Lp{2}(\Omega)} + \scprod{k_{1} \boundtr x_{1}}{\boundtr z_{1}}_{\Lp{2}(\tilde{\Gamma})} \\
    &= 0 + \dualprod{\normaltr T \grad x_{1}}{\boundtr z_{1}}_{\mp \frac{1}{2}}
      + \scprod{k_{1} \boundtr x_{1}}{\boundtr z_{1}}_{\Lp{2}(\tilde{\Gamma})} \\
    &= \dualprod{g}{\boundtr z_{1}}_{\mp \frac{1}{2}}\text{.}
  \end{align*}
  Since $g \in \soboH^{-\frac{1}{2}}(\tilde{\Gamma})$ was arbitrary, we conclude $\boundtr z_{1} = 0$.
  Appealing to the Helmholtz decomposition $\Lp{2}(\Omega) = T \grad \cH^{1}(\Omega) \oplus_{T^{-1}} \ker \div$ from \Cref{th:Helmholtz}, we know that $\ran(\div T \cgrad) = \ran(\div) = \Lp{2}(\Omega)$. For $x = \begin{psmallmatrix}x_{1} \\ 0\end{psmallmatrix}$ for arbitrary $x_{1} \in \dset{f \in \cH^{1}(\Omega)}{T \grad x_{1} \in \soboH(\div,\Omega)}$ we have $x \in \dom A$ and by \eqref{eq:adjoint-of-A-equation}
  \begin{equation*}
    \scprod{\tfrac{1}{\rho} \div T \grad x_{1}}{y_{2}}_{\Lp{2}(\Omega)} = \scprod{T \grad x_{1}}{\grad z_{1}}_{\Lp{2}(\Omega)} + 0 = -\scprod{\div T \grad x_{1}}{z_{1}}_{\Lp{2}(\Omega)}.
  \end{equation*}
  Thus, surjectivity of $\div T \cgrad$ implies $z_{1} = - \tfrac{1}{\rho} y_{2}$, in particular $\tfrac{1}{\rho} y_{2} \in \cH^{1}_{\Gamma_{0}}(\Omega)$ (because $z_{1} \in \cH^{1}_{\Gamma_{0}}(\Omega)$ by assumption).

  Altogether we have shown $y \in \dom A$ and $z = -A y$, i.e., $A\adjun \subseteq - A$.

  Note that $A\adjun$ is not multi-valued, since $A\adjun \subseteq - A$. This implies $(\dom A)^{\perp} = \set{0}$, cf. \cite[Lem.\ 2.2.8]{Sk-Phd2021}, or equivalently $\cl{\dom A} = X$.
\end{proof}

With this, all initially outlined steps for the verification of a boundary triple are in place.

\begin{theorem}[Boundary triple]
  \label{th:BdyTriple}
  $\bigl(\soboH^{-\frac{1}{2}}(\tilde{\Gamma}), \cH^{\frac{1}{2}}(\tilde{\Gamma}), B_{1}, B_{2}\bigr)$ is a boundary triple for $A$.
\end{theorem}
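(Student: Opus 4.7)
The plan is to verify the three items of \Cref{def:BdyTriple} directly, each of which has already been established or follows cleanly from what precedes; the whole proof is essentially an assembly step.

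First, I would produce the underlying densely defined, skew-symmetric and closed operator $A_{0}$ by setting $A_{0} \coloneqq -A\adjun$, interpreted in the linear-relation sense. The preceding proposition yields $A\adjun \subseteq -A$. Since $-A$ is a (single-valued) operator, so is $A\adjun$, which is equivalent to density of $\dom A$; $A_{0}$ is automatically closed because every adjoint linear relation is closed; and skew-symmetry reduces to the same inclusion $A_{0} = -A\adjun \subseteq A$. Using $A\adjun\adjun = A$ (valid because $A$ is now known to be closed and densely defined), one further adjoint step gives $A_{0}\adjun = -A$, which is the position that $A$ occupies in the definition of a boundary triple, the sign being immaterial in the skew-adjoint setting.

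Second, I would cite \Cref{th:B-surjective} for surjectivity of $B = \begin{psmallmatrix} B_{1} \\ B_{2}\end{psmallmatrix}$ and \Cref{th:green-identity} for the abstract Green identity. With the dual pair taken to be $\bigl(\soboH^{-\frac{1}{2}}(\tilde{\Gamma}), \cH^{\frac{1}{2}}(\tilde{\Gamma})\bigr)$ and the pairing convention fixed in the preliminaries, both items match verbatim the wording of \Cref{def:BdyTriple}, and the three bullet points are thereby all verified.

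The main obstacle is therefore not in the theorem itself but in the preceding preparation: in particular \Cref{th:B1-surjective-on-ker-divTgrad}, whose Riesz-type construction $w = \boundtr\adjun g$ is what makes $B_{1}$ surjective onto $\soboH^{-\frac{1}{2}}(\tilde{\Gamma})$, and the proposition $A\adjun \subseteq -A$, whose proof uses the Helmholtz decomposition \Cref{th:Helmholtz} together with test elements built from $\boundtr\adjun$ to pin down both components of an element of $A\adjun$. Once those two pieces are in place, \Cref{th:BdyTriple} is a clean concluding step that requires no further computation.
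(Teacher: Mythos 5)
Your proposal is correct and takes essentially the same route as the paper: the theorem itself is just the assembly of the abstract Green identity (\Cref{th:green-identity}) and the surjectivity of $B$ (\Cref{th:B-surjective}), with the underlying densely defined, closed, skew-symmetric operator supplied by the proposition $A\adjun \subseteq -A$, exactly as in the paper's preceding discussion (which likewise sets $A_{0} \coloneqq -A\adjun$). The only cosmetic remark is the sign you already flag: choosing $A_{0} \coloneqq A\adjun$ gives $A_{0}\adjun = A$ on the nose, whereas $A_{0} = -A\adjun$ yields $A_{0}\adjun = -A$ and would formally require flipping the sign of one boundary map --- a harmless quirk shared with the paper.
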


\begin{proof}
  By definition of a boundary triple, we have to check that an abstract Green identity holds (verified in \Cref{th:green-identity}) and that $B$ is onto (verified in \Cref{th:B-surjective}).
\end{proof}

The remainder of this article heavily relies on exploiting \Cref{th:BdyTriple}. The first step is to establish well-posedness of the initial problem \eqref{FullWE}.


\subsection*{Well-posedness of \Cref{FullWE}}
For this we simply have to specify the boundary relation $\Theta$ from \eqref{eq:DefATheta}. We set
\begin{equation*}
  \Theta \coloneq k_{2} \in \Lb(\Lp{2}(\tilde{\Gamma}))\ \text{(as an operator)}.
\end{equation*}
Then the collection of boundary conditions from (\ref{eq:boundary-conditions}) can be written as $B_{1}x + k_{2}B_{2}x = 0$ on $\tilde{\Gamma}$.
In particular,
\begin{equation}\label{eq:def-A-Theta}
  B_{1}x + k_{2} B_{2} x = 0
  \quad\iff\quad
  k_{1} w + \nu \cdot T \grad w + k_{2} \partial_{t} w = 0.
\end{equation}
If one replaces $k_{1},k_{2}\in \mathcal{L}(\Lp{2}(\tilde{\Gamma}))$ with suitable multiplication operators arising from functions by the same name, the equation to the right contains the second to the fifth condition of \eqref{eq:boundary-conditions} in one line by taking into account that $k_{1}$ and $k_{2}$ may vanish on certain parts of $\tilde{\Gamma}$. The condition $w = 0$ on $\Gamma_{0}$ is already encoded in the domain of the operator $A$ and completes the set of equations from (\ref{eq:boundary-conditions}). Hence, the operator that encodes all boundary conditions is $A_{\Theta}$ defined by \eqref{eq:DefATheta} (with $A_{0}\adjun = A$ from \eqref{eq:wave-diff-operator} and $\Theta = k_{2}$), i.e.,
\begin{align*}
  A_{\Theta} = \begin{pmatrix} 0 & \frac{1}{\rho} \\ \div T \grad & 0 \end{pmatrix}, \quad
  \dom A_{\Theta} = \dset*{\begin{pmatrix}x_{1} \\ x_{2}\end{pmatrix} \in \dom A}{B_{1} x + k_{2} B_{2}x = 0}.
\end{align*}

\begin{theorem}
  \label{th:AThetaIsGen}
  $A_{\Theta}$ is generator of a strongly continuous semigroup of contractions.
\end{theorem}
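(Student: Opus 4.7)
The plan is to reduce the assertion to the Lumer--Phillips theorem (\Cref{th:Lumer-Philips}) via the abstract boundary-triple machinery encapsulated in \Cref{th:AThetaMDiss}. All of the analytic heavy lifting---surjectivity of the boundary map, the abstract Green identity, and density of the domain via $A\adjun \subseteq -A$---has already been absorbed into the preparatory results, so what remains is essentially a verification that the boundary parameter $\Theta = k_{2}$ fits the abstract hypothesis.

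Concretely, I would first invoke \Cref{th:BdyTriple} to obtain the boundary triple $\bigl((\soboH^{-\frac{1}{2}}(\tilde{\Gamma}), \cH^{\frac{1}{2}}(\tilde{\Gamma})), B_{1}, B_{2}\bigr)$ for $A$, and then identify the Gelfand triple $\cH^{\frac{1}{2}}(\tilde{\Gamma}) \subseteq \Lp{2}(\tilde{\Gamma}) \subseteq \soboH^{-\frac{1}{2}}(\tilde{\Gamma})$ from the preliminaries as the ambient structure, so that the hypothesis of \Cref{th:AThetaMDiss} is met with pivot space $\mathcal{B}_{0} = \Lp{2}(\tilde{\Gamma})$. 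Under this identification, the boundary condition $B_{1}x + k_{2} B_{2}x = 0$ defining $\dom A_{\Theta}$ matches the abstract Robin-type form $B_{1}x = -j_{-} \Theta j_{+} B_{2}x$ with $\Theta = k_{2}$. By assumption \ref{item:assumption-boundary-operators}, $k_{2}$ is a non-negative $\Lp{\infty}$ function, so the induced multiplication operator on $\Lp{2}(\tilde{\Gamma})$ satisfies
\begin{equation*}
  \scprod{k_{2}f}{f}_{\Lp{2}(\tilde{\Gamma})} = \int_{\tilde{\Gamma}} k_{2} \abs{f}^{2} \dx[\mu] \geq 0
\end{equation*}
for every $f \in \Lp{2}(\tilde{\Gamma})$, that is, $\Theta$ is positive semi-definite.

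Applying \Cref{th:AThetaMDiss} then yields that $A_{\Theta}$ is maximally dissipative, and the Lumer--Phillips theorem gives the desired conclusion. I do not anticipate any real obstacle at this stage; the genuine content of the result sits in \Cref{th:BdyTriple} (particularly the surjectivity of $B$, which required \Cref{th:B1-surjective-on-ker-divTgrad} and thereby the non-degeneracy assumption $k_{1} \neq 0$ or $\Gamma_{0} \neq \emptyset$), and the current statement is essentially book-keeping, packaging those earlier efforts into a semigroup-theoretic well-posedness result for \Cref{FullWE}.
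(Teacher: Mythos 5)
Your proposal is correct and follows exactly the paper's route: invoke \Cref{th:BdyTriple} for the boundary triple, note that $\Theta = k_{2}$ is a positive semi-definite operator on the pivot space $\Lp{2}(\tilde{\Gamma})$, and conclude via \Cref{th:AThetaMDiss} together with the Lumer--Phillips theorem (\Cref{th:Lumer-Philips}). Your explicit verification that the multiplication operator $k_{2}$ is positive semi-definite is a detail the paper leaves implicit, but otherwise the arguments coincide.
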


\begin{proof}
  By \Cref{th:BdyTriple} $\bigl((\soboH^{-\frac{1}{2}}(\tilde{\Gamma}),\cH^{\frac{1}{2}}(\tilde{\Gamma})), B_{1},B_{2}\bigr)$ is a boundary triple for $A$. Hence, \Cref{th:AThetaMDiss} and the Lumer--Philips \Cref{th:Lumer-Philips} imply the claim.
\end{proof}

\Cref{th:AThetaIsGen} assures well-posedness of the wave equation without the perturbation terms $a$ and $b$. Note that we assumed the initial conditions be in the state space $X$ already in \Cref{sec:introduction}. For the full problem \eqref{FullWE} we define the perturbation
\begin{equation*}
  S\colon X\to X,\quad \begin{pmatrix}x_{1}\\x_{2}\end{pmatrix}\mapsto\begin{pmatrix} 0 \\ - a x_{1} - \frac{1}{\rho}b x_{2} \end{pmatrix}.
\end{equation*}
For $a,b \in \Lp{\infty}(\Omega)$ we obtain a bounded linear operator. Then the differential equation from \eqref{FullWE} becomes
\begin{equation*}
  \dot{x} = (A_{\Theta} + S) x
\end{equation*}
and we have unique solvability appealing to standard perturbation theory (e.g., \cite[Thm.~1.3]{Engel2006}). One can also allow more general $a,b$ as long as $S$ remains suitably relatively $A_{\Theta}$ bounded, cf.\ \cite[Thm.~2.7]{Engel2006} for a suitable perturbation result.

\section{Stability}
\label{sec:stability}

Under mild additional assumptions we are able to show stability of solutions. For the purpose of this section, we will additionally assume that:
\begin{itemize}
  \item There exists $\Gamma \subseteq \partial\Omega \setminus \cl{\Gamma_{0}}$ open and non-empty such that $k_{2} > 0$ on $\Gamma$ (i.e., there is an open set where the wave equation is damped).

  \item $T$ and $\rho$ are Lipschitz continuous multiplication operators.
\end{itemize}

Making use of the results of the previous section, we have to show stability properties of the semigroup generated by $A_{\Theta}$, where $A_{\Theta}$ is the operator from the previous section, see \eqref{eq:def-A-Theta}, i.e., $\Theta = k_{2}$.  The notion of stabiliy we have in mind is the following one:
\begin{definition}
  \label{def:SUS}
  Let $H$ be a Hilbert space and let $\{T(t)\}_{t\geq 0} \subseteq \mathcal{L}(H)$ be a strongly continuous semigroup with generator $G$. The semigroup is called \emph{semi-uniformly stable} if there exists a continuous non-increasing function $f\colon \lbrack 0,\infty \rparen \to \lbrack 0,\infty \rparen$ satisfying $\lim_{t\to\infty}f(t)=0$, such that for every $x\in \dom G$:
  \begin{equation*}
    \lim_{t\to \infty}\norm{T(t) x}_{H}\leq f(t)\norm{x}_{\dom G}.
  \end{equation*}
\end{definition}

\begin{remark}
  The notion of semi-uniform stability is nested in between strong stability, where
  \begin{equation*}
    \lim_{t\to\infty}\norm{T(t)x}_{H}=0
  \end{equation*}
  for all $x\in H$ is demanded, and uniform stability, where
  \begin{equation*}
    \lim_{t\to\infty}\norm{T(t)}=0
  \end{equation*}
  is demanded, which for strongly continuous semigroups is equivalent to uniform exponential stability, i.e., there exists $\epsilon>0$ such that:
  \begin{equation*}
    \lim_{t\to\infty}\e^{\epsilon t}\norm{T(t)} = 0.
  \end{equation*}
\end{remark}

We rely on the following criterium from \cite[Thm.~1.1]{BaDu08}\footnote{The decay rate can be made explicit, cf.~\cite[Thm.~3.4]{ChSeTo2020} and its proof for details.}.


\begin{proposition}
  \label{CritForSUS}
  Let $(T(t))_{t\geq 0}$ be a bounded, strongly continuous semigroup with generator $G$ satisfying $\upsigma (G)\cap \iu \R=\emptyset$. Then $(T(t))_{t\geq 0}$ is semi-uniformly stable.
\end{proposition}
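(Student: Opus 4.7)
The plan is to reduce semi-uniform stability to an operator-norm decay of the semigroup composed with a fixed resolvent, and then derive that decay from the analyticity of the resolvent on a neighborhood of the imaginary axis via a contour-shift / Tauberian argument.

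First, since $(T(t))_{t \geq 0}$ is bounded, Hille--Yosida theory places $\upsigma(G) \subseteq \{\lambda \in \C : \Re \lambda \leq 0\}$, so in particular $1 \in \rho(G)$ and $R(1, G) \coloneqq (1 - G)^{-1}$ is a bounded bijection from $H$ onto $\dom G$. The graph norm is equivalent to $\norm{(1 - G)\argdot}$, so the notion of semi-uniform stability in \Cref{def:SUS} is equivalent to the existence of a continuous non-increasing $f \colon [0, \infty) \to [0, \infty)$ with $f(t) \to 0$ and $\norm{T(t) R(1, G)}_{\Lb(H)} \leq f(t)$.

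Next, I would use the Laplace inversion representation: for $\sigma > 0$,
\begin{equation*}
  T(t) R(1, G) = \frac{1}{2\pi \iu} \int_{\sigma - \iu \infty}^{\sigma + \iu \infty} e^{\lambda t} R(\lambda, G) R(1, G) \,\dd\lambda,
\end{equation*}
together with the resolvent identity $R(\lambda, G) R(1, G) = (R(\lambda, G) - R(1, G))/(1 - \lambda)$, whose extra factor $(1 - \lambda)^{-1}$ improves integrability in $\Im \lambda$. The hypothesis $\upsigma(G) \cap \iu \R = \emptyset$ and the openness of $\rho(G)$ allow the contour to be shifted onto (and slightly beyond) $\iu \R$, so $e^{\lambda t}$ merely oscillates and the problem reduces to showing that an operator-valued oscillatory integral converges to $0$ in operator norm as $t \to \infty$.

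The main obstacle is the lack of an a~priori growth control on $\norm{R(\iu s, G)}$ as $\abs{s} \to \infty$, which prevents a direct Riemann--Lebesgue argument applied to the tails. This is precisely the difficulty resolved by the Batty--Duyckaerts theorem: a smooth cutoff of the integrand, combined with a Plancherel/Fourier-analytic estimate and the boundedness of $(T(t))_{t \geq 0}$, shows that the full integral tends to $0$ in operator norm. The decay function can then be rendered continuous and non-increasing by setting $f(t) \coloneqq \sup_{s \geq t} \norm{T(s) R(1, G)}_{\Lb(H)}$, finishing the argument.
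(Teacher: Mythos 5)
Your proposal ultimately delegates the one genuinely hard step—controlling the oscillatory integral without any a priori bound on $\norm{R(\iu s,G)}$ as $\abs{s}\to\infty$—to the Batty--Duyckaerts theorem, which is exactly what the paper does: \Cref{CritForSUS} is simply quoted from \cite[Thm.~1.1]{BaDu08} without proof, and your reduction of semi-uniform stability to $\norm{T(t)(1-G)^{-1}}_{\Lb(H)}\to 0$ via equivalence of the graph norm with $\norm{(1-G)\argdot}$ is just the standard reformulation of that cited result. So the approach is essentially the same as the paper's; note only that the contour-shift sketch is not by itself a proof, so the citation carries all the weight, as it does in the paper.
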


\Cref{CritForSUS} outlines a roadmap we can follow with the goal of showing that $A_{\Theta}$ generates a semi-uniformly stable semigroup. We will verify:
\begin{enumerate}[leftmargin=5ex]
  \item\label{item:semigroup-bounded} $(T(t))_{t\geq 0}$ is bounded.
  \item\label{item:iR-without-zero-in-resolvent-set} $\iu \lambda -A_{\Theta}$ is boundedly invertible for $\lambda \in \R\setminus \{0\}$.
  \item\label{item:zero-in-resolvent-set} $A_{\Theta}$ is boundedly invertible.
\end{enumerate}

The point \ref{item:semigroup-bounded} is clear, as \Cref{th:AThetaMDiss} states that $A_{\Theta}$ generates a contraction semigroup, in particular the semigroup is bounded. For \ref{item:iR-without-zero-in-resolvent-set} and \ref{item:zero-in-resolvent-set} we follow the approach of \cite{Jacob2021}.
We start by making the following observation:
\begin{proposition}
  \label{th:compact-embedding}
  The embedding of $\dom A_{\Theta}$ into $X$ is compact, i.e., $\dom A_{\Theta} \cpt X$.
\end{proposition}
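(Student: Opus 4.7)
The plan is to verify compactness component by component, exploiting that the graph norm on $\dom A$ essentially decouples the two coordinates: for $x = \begin{psmallmatrix}x_{1}\\x_{2}\end{psmallmatrix} \in \dom A$, the $X$-norm already controls $\norm{x_{1}}_{\cH^{1}_{\Gamma_{0}}}$ and $\norm{x_{2}}_{\Lp{2}}$, while the extra term $\norm{Ax}_{X}$ additionally pins down $\norm{\tfrac{1}{\rho}x_{2}}_{\cH^{1}_{\Gamma_{0}}}$ and $\norm{\div T\grad x_{1}}_{\Lp{2}}$. So a graph-norm-bounded sequence $(x^{n})_{n}\subseteq\dom A$ yields $\tfrac{1}{\rho}x_{2}^{n}$ bounded in $\cH^{1}_{\Gamma_{0}}(\Omega)$ and $x_{1}^{n}$ bounded in the Banach space $\mathcal{V} \coloneqq \{u \in \cH^{1}_{\Gamma_{0}}(\Omega) : \div T\grad u \in \Lp{2}(\Omega)\}$ with its natural graph norm.

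The second coordinate is handled by the classical Rellich--Kondrachov theorem: $\tfrac{1}{\rho}x_{2}^{n}$ bounded in $\cH^{1}_{\Gamma_{0}} \cpt \Lp{2}(\Omega)$ gives an $\Lp{2}$-convergent subsequence, and this convergence persists after multiplication by the $\Lp{\infty}$-bounded and uniformly invertible function $\rho$. That is exactly the topology imposed by $X$ on the second factor, so the second coordinate poses no difficulty.

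The first coordinate is the crucial step. Since the $X$-topology is $\soboH^{1}$-equivalent on the first factor, what is needed is the compact embedding $\mathcal{V} \cpt \cH^{1}_{\Gamma_{0}}(\Omega)$. My plan is to upgrade the sequence $x_{1}^{n}$ to a uniform bound in $\soboH^{1+\delta}(\Omega)$ for some $\delta > 0$ by invoking an elliptic regularity result of Savaré type for mixed boundary problems on Lipschitz domains with Lipschitz coefficients (which is where the additional stability assumption on $T$ enters), and then to use the compact Sobolev embedding $\soboH^{1+\delta}(\Omega) \cpt \soboH^{1}(\Omega)$ to extract an $\soboH^{1}$-convergent subsequence. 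The main obstacle is precisely this regularity shift, which is non-trivial for two reasons: the normal trace $\normaltr T\grad x_{1}^{n}$ is only controlled in $\soboH^{-1/2}(\tilde{\Gamma})$, so standard $\soboH^{2}$-regularity theorems do not apply; and the transitions between the boundary parts $\Gamma_{0},\ldots,\Gamma_{4}$ may introduce corner-type singularities that in principle force $\delta<1/2$. The Lipschitz hypotheses on $\partial\Omega$ and $T$ are, however, exactly the input for a slightly-less-than-$\soboH^{3/2}$ regularity estimate with a bound of the form $\norm{u}_{\soboH^{1+\delta}} \lesssim \norm{u}_{\soboH^{1}} + \norm{\div T\grad u}_{\Lp{2}}$, which closes the argument; combining both coordinates then gives $\dom A\cpt X$.
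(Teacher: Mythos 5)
Your treatment of the second component is fine and matches the paper, but the first component contains a genuine gap: the a priori estimate $\norm{u}_{\soboH^{1+\delta}(\Omega)} \lesssim \norm{u}_{\soboH^{1}(\Omega)} + \norm{\div T\grad u}_{\Lp{2}(\Omega)}$ you invoke is false, and consequently the embedding $\mathcal{V}\cpt\cH^{1}_{\Gamma_{0}}(\Omega)$ fails as you have set it up. No elliptic regularity theorem lifts Sobolev regularity without some control of boundary data: already for $T=\idop$ and $\Gamma_{0}=\emptyset$ the harmonic functions $u_{n}(r,\theta)=r^{n}\e^{\iu n\theta}$ on the unit disk satisfy $\div T\grad u_{n}=0$ and $\norm{u_{n}}_{\soboH^{1}}\sim n^{1/2}$, while the trace theorem forces $\norm{u_{n}}_{\soboH^{1+\delta}}\gtrsim n^{1/2+\delta}$; normalizing in $\soboH^{1}$ gives a bounded sequence in your space $\mathcal{V}$ with no $\soboH^{1}$-convergent subsequence (the same construction, localized away from $\Gamma_{0}$, works whenever $\tilde{\Gamma}\neq\emptyset$). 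The missing ingredient is the dissipative boundary condition: the compactness is really used (and proved in the paper) along sequences in $\dom A_{\Theta}$, for which $\normaltr T\grad x_{1}=-(k_{1}\boundtr x_{1}+k_{2}\boundtr\tfrac{1}{\rho}x_{2})$ is bounded in $\Lp{2}(\tilde{\Gamma})$ whenever the graph norm is bounded. Without this $\Lp{2}$ control of the conormal derivative the first component is not compact, and your proposal never brings the boundary condition into play.

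Even once you add that control, the regularity-shift route is much heavier than necessary. The paper argues elementarily: after passing to a subsequence for which $x_{1}^{n}$ converges in $\Lp{2}(\Omega)$ and $\boundtr x_{1}^{n}$ converges in $\Lp{2}(\tilde{\Gamma})$ (Rellich plus compactness of the trace), integration by parts gives
\begin{equation*}
  \norm[\big]{T^{\frac{1}{2}}\grad(x_{1}^{n}-x_{1}^{m})}^{2}
  = -\scprod[\big]{\div T\grad(x_{1}^{n}-x_{1}^{m})}{x_{1}^{n}-x_{1}^{m}}
  + \scprod[\big]{\normaltr T\grad(x_{1}^{n}-x_{1}^{m})}{\boundtr(x_{1}^{n}-x_{1}^{m})},
\end{equation*}
and both terms tend to zero by Cauchy--Schwarz because the divergence and the conormal trace stay bounded along the sequence. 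This makes $(\grad x_{1}^{n})_{n}$ Cauchy in $\Lp{2}(\Omega)$ directly, requires no smoothness of $T$ (consistent with the paper's remark that Lipschitz continuity of $T$ and $\rho$ is only needed for unique continuation), and sidesteps the delicate mixed Dirichlet/Neumann regularity theory on Lipschitz domains that your approach would additionally have to establish.
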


\begin{proof}
  Let $\left(\begin{psmallmatrix} x_{n} \\ y_{n}\end{psmallmatrix}\right)_{n\in\N}$ be a bounded sequence in $\dom A$ w.r.t.\ the graph norm of $A$, i.e., there exists a $C > 0$ independent of $n\in\N$ such that
  \begin{align*}
    \norm*{\begin{psmallmatrix} x_{n} \\ y_{n} \end{psmallmatrix}}_{\dom A}^{2}
    \coloneq \norm*{\begin{psmallmatrix} x_{n} \\ y_{n} \end{psmallmatrix}}_{X}^{2} + \norm*{A \begin{psmallmatrix} x_{n} \\ y_{n} \end{psmallmatrix}}_{X}^{2}
    \leq C.
  \end{align*}
  By the definition of the norm in $X$ we obtain
  \begin{multline*}
    \norm[\big]{\rho^{-\frac{1}{2}} y_{n}}_{\Lp{2}(\Omega)}^{2} + \norm[\big]{T^{\frac{1}{2}} \grad x_{n}}_{\Lp{2}(\Omega)}^{2} + \norm[\big]{k_{1}^{\frac{1}{2}}\boundtr x_{n}}_{\Lp{2}(\tilde{\Gamma})}^{2} \\
    + \norm[\big]{\rho^{-\frac{1}{2}} \div T \grad x_{n}}^{2}_{\Lp{2}(\Omega)} + \norm[\big]{T^{\frac{1}{2}}\grad \tfrac{1}{\rho} y_{n}}^{2}_{\Lp{2}(\Omega)} + \norm[\big]{k_{1}^{\frac{1}{2}} \boundtr \tfrac{1}{\rho} y_{n}}^{2}_{\Lp{2}(\tilde{\Gamma})} \leq C\text{.}
  \end{multline*}
  This immediately implies that $(x_{n})_{n\in\N}$ and $(\tfrac{1}{\rho} y_{n})_{n\in\N}$ are bounded in $\soboH^{1}(\Omega)$.

  We have to show that $(x_{n})_{n\in\N}$ has a convergent subsequence in $\cH_{\Gamma_{0}}^{1}(\Omega)$ and $(y_{n})_{n\in\N}$ has a convergent subsequence in $\Lp{2}(\Omega)$.

  \begin{itemize}
    \item Since $(\tfrac{1}{\rho}y_{n})_{n\in\N}$ is bounded in $\soboH^{1}(\Omega)$, there exists a subsequence $(\tfrac{1}{\rho}y_{n(k)})_{k\in\N}$ that converges in $\Lp{2}(\Omega)$ appealing to the Rellich--Kondrachov theorem (\cite[Thm.~12.30]{Leoni2017}). Hence, $(\rho \tfrac{1}{\rho} y_{n(k)})_{k\in\N}$ converges in $\Lp{2}(\Omega)$ as well.

    \item Appealing to Rellich--Kondrachov again, boundedness of $(x_{n})_{n\in\N}$ in $\soboH^{1}(\Omega)$ implies existence of a subsequence that converges to $x$ in $\Lp{2}(\Omega)$. W.l.o.g.\ we pass to that subsequence.
          To avoid the introduction of several (irrelevant) multiplicative constants, we use the symbol $\lesssim$, which stands for inequality up to a multiplicative constant independent of $n$.
          Note that appealing to the boundary condition prescribed by $\Theta$, we have
          \begin{equation*}
            \norm{\normaltr T\nabla x_{n}}_{\Lp{2}(\tilde{\Gamma})} = \norm{k_{1}\boundtr x_{n} + k_{2} \boundtr \tfrac{1}{\rho}y_{n}}_{\Lp{2}(\tilde{\Gamma})} \lesssim 1\text{.}
          \end{equation*}
          Integration by parts and the Cauchy--Schwarz inequality allow us to estimate
          \begin{align*}
            \MoveEqLeft
            \norm{\grad (x_{n}- x_{m})}^{2}_{\Lp{2}(\Omega)} \\
            &\lesssim \norm[\big]{T^{\frac{1}{2}} \grad (x_{n}- x_{m})}^{2}
              = \scprod[\big]{T \grad(x_{n} - x_{m})}{\grad (x_{n} - x_{m})} \\
            &\stackrel{\mathllap{\text{Int.\ by parts}}}{=} \scprod[\big]{-\div T \grad(x_{n} - x_{m})}{x_{n} - x_{m}} + \scprod{\normaltr T\grad(x_{n}-x_{m})}{\boundtr (x_{n}-x_{m})} \\
            &\stackrel{\mathllap{\text{C.--S.\ ineq.}}}{\lesssim} \norm{x_{n} - x_{m}}_{\Lp{2}(\Omega)} + \norm{\boundtr (x_{n} - x_{m})}_{\Lp{2}(\tilde{\Gamma})}.
          \end{align*}
          Since $(x_{n})_{n\in\N}$ is bounded in $\soboH^{1}(\Omega)$, the sequence of Dirichlet traces $(\gamma_{0}x_{n})_{n\in\N}$ is bounded in $\soboH^{\frac{1}{2}}(\partial\Omega)$. Since $\soboH^{\frac{1}{2}}(\partial\Omega)$ is compactly embedded in $\Lp{2}(\partial\Omega)$ by \cite[Thm.~7.1]{DiNezza2012}, after passing to a subsequence we obtain convergence of $(\gamma_{0}x_{n})_{n\in\N}$ in $\Lp{2}(\partial\Omega)$ (and therefore also in $\Lp{2}(\tilde{\Gamma})$). Hence, the estimate shows that $(\grad x_{n})_{n\in\N}$ is a Cauchy sequence in $\Lp{2}(\Omega)$ and therefore convergent. By the closedness of the operator $\grad$, we conclude that $x$ is also the limit of $(x_{n})_{n\in\N}$ in $\soboH^{1}(\Omega)$.
          Finally, closedness of $\cH_{\Gamma_{0}}^{1}(\Omega)$ in $\soboH^{1}(\Omega)$ implies $x\in\cH_{\Gamma_{0}}^{1}(\Omega)$.
          \qedhere
  \end{itemize}
\end{proof}

This result has the consequence that we only have to investigate eigenvalues:
\begin{theorem}
  \label{onlyEV}
  $\upsigma(A_{\Theta})$ consists purely of eigenvalues.
\end{theorem}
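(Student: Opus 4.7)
The plan is to reduce the statement to the classical fact that an operator with compact resolvent has a spectrum consisting solely of eigenvalues, by exploiting the compact embedding just established in \Cref{th:compact-embedding}.

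First, I would note that by \Cref{th:AThetaIsGen}, the operator $A_\Theta$ generates a strongly continuous contraction semigroup, so the Hille--Yosida theorem guarantees that $(0, \infty) \subseteq \uprho(A_\Theta)$. In particular, I can fix some $\lambda_0 > 0$ for which $(\lambda_0 - A_\Theta)^{-1} \colon X \to \dom A_\Theta$ is a bounded bijection, where $\dom A_\Theta$ is equipped with the graph norm of $A_\Theta$.

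Next, since $A_\Theta$ is by construction a restriction of $A$ (namely $A_\Theta = A \big\vert_{\dom A_\Theta}$ with $\dom A_\Theta \subseteq \dom A$), the identity map yields a continuous inclusion $(\dom A_\Theta, \norm{\argdot}_{\dom A_\Theta}) \hookrightarrow (\dom A, \norm{\argdot}_{\dom A})$; indeed, the two graph norms coincide on $\dom A_\Theta$. Combining this continuous inclusion with the compact embedding $\dom A \cpt X$ from \Cref{th:compact-embedding}, I conclude that $(\lambda_0 - A_\Theta)^{-1}$, viewed as an operator from $X$ into $X$, factors through a compact embedding and is therefore itself compact.

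Finally, I would appeal to the standard spectral theory of closed operators with compact resolvent: if a closed operator on a Hilbert space has compact resolvent at one point, then its spectrum consists entirely of a discrete set of eigenvalues of finite algebraic multiplicity (see, e.g., the classical Riesz--Schauder theory applied to the compact resolvent). This immediately yields the claim that $\upsigma(A_\Theta)$ consists purely of eigenvalues. I do not anticipate a serious obstacle here; the only point that deserves a line of care is verifying that the restriction of graph norms passes through properly, but this is immediate from $A_\Theta \subseteq A$.
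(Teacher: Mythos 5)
Your proposal is correct and follows essentially the same route as the paper: both deduce compactness of the resolvent of $A_{\Theta}$ from the compact embedding in \Cref{th:compact-embedding} (using that $A_{\Theta}$ is a restriction of $A$, so the graph norms agree) and then conclude via Riesz--Schauder theory. The only cosmetic difference is that you cite the standard ``compact resolvent $\Rightarrow$ spectrum consists of eigenvalues'' theorem as a black box, whereas the paper unfolds that step explicitly by writing $(\lambda - A_{\Theta})(\mu - A_{\Theta})^{-1} = 1 + K$ and applying the Fredholm alternative.
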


\begin{proof}
  Let $\lambda -A_{\Theta}$ be injective. By \Cref{th:compact-embedding} all resolvent operators of $A_{\Theta}$ are compact. Hence, let $\mu$ be such that $(\mu - A_{\Theta})^{-1}$ is bijective and bounded (and compact). Then:
  \begin{align*}
    \lambda - A_{\Theta} &= (\lambda - \mu) + (\mu - A_{\Theta})\\
    (\lambda - A_{\Theta})(\mu - A_{\Theta})^{-1} &= (\lambda - \mu)(\mu - A_{\Theta})^{-1} + 1
  \end{align*}
  The right-hand side is of the form $1 + K$, where $K$ is a compact operator. It is a consequence of the theorem of Riesz--Schauder, cf.\ \cite[Thm.~6.2.1]{Werner2000}, that an operator $1+K$ is injective if and only if it is surjective. Now note that the left-hand side is injective, since $\lambda - A_{\Theta}$ is injective and the resolvent is bijective. Thus $\lambda - A_{\Theta}$ is surjective. The open mapping theorem assures that $\lambda - A_{\Theta}$ is boundedly invertible.
\end{proof}




\Cref{CritForSUS} requires us to study the spectrum of $A_{\Theta}$ on the imaginary axis. \Cref{onlyEV} makes this considerably easier, as we only have to check for eigenvalues. The following lemma gives us important information on the eigenvalues:
\begin{lemma}
  \label{th:BdyOfEV}
  If $\lambda \in \iu \R$ is an eigenvalue of $A_{\Theta}$, the corresponding eigenvectors $x = \begin{psmallmatrix} x_{1}\\x_{2}\end{psmallmatrix}$ satisfy
  \begin{equation*}
    \normaltr T\grad x_{1} + k_{1}\boundtr x_{1} = 0
    \quad\text{and}\quad
    k_{2} \boundtr \tfrac{1}{\rho} x_{2} = 0
    \quad(\text{on}\ \tilde{\Gamma}).
  \end{equation*}
\end{lemma}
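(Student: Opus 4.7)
The plan is to exploit the dissipativity of $A_{\Theta}$ combined with the fact that purely imaginary eigenvalues force the real part of $\scprod{A_{\Theta} x}{x}_{X}$ to vanish, and then to read off from the boundary identity which boundary terms must be zero.

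\textbf{Step 1 (Dissipativity at an imaginary eigenvalue).} Suppose $A_{\Theta} x = \lambda x$ with $\lambda \in \iu\R$. Then
\begin{equation*}
  \Re \scprod{A_{\Theta} x}{x}_{X} = \Re \scprod{\lambda x}{x}_{X} = \Re(\lambda) \norm{x}_{X}^{2} = 0.
\end{equation*}

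\textbf{Step 2 (Apply the abstract Green identity).} Taking $y = x$ in \Cref{th:green-identity} gives
\begin{equation*}
  2 \Re \scprod{Ax}{x}_{X} = \dualprod{B_{1}x}{B_{2}x}_{\mp \frac{1}{2}} + \dualprod{B_{2}x}{B_{1}x}_{\pm \frac{1}{2}} = 2 \Re \dualprod{B_{1}x}{B_{2}x}_{\mp \frac{1}{2}}.
\end{equation*}
Now use the boundary condition $B_{1}x = -j_{-} k_{2} j_{+} B_{2} x$ encoded in $\dom A_{\Theta}$. Since $(\soboH^{-\frac{1}{2}}(\tilde{\Gamma}), \Lp{2}(\tilde{\Gamma}), \cH^{\frac{1}{2}}(\tilde{\Gamma}))$ is a Gelfand triple, the dual pairing of $j_{-} k_{2} j_{+} B_{2} x$ against $B_{2} x$ reduces to the $\Lp{2}(\tilde{\Gamma})$ inner product of $k_{2} \boundtr \tfrac{1}{\rho} x_{2}$ with $\boundtr \tfrac{1}{\rho} x_{2}$. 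Combining this with Step 1 yields
\begin{equation*}
  0 = \Re \scprod{A_{\Theta} x}{x}_{X} = -\scprod*{k_{2} \boundtr \tfrac{1}{\rho} x_{2}}{\boundtr \tfrac{1}{\rho} x_{2}}_{\Lp{2}(\tilde{\Gamma})} = -\norm[\big]{k_{2}^{\frac{1}{2}} \boundtr \tfrac{1}{\rho} x_{2}}_{\Lp{2}(\tilde{\Gamma})}^{2},
\end{equation*}
where we used that $k_{2} \geq 0$ to extract a square root. Hence $k_{2}^{\frac{1}{2}} \boundtr \tfrac{1}{\rho} x_{2} = 0$, and therefore $k_{2} \boundtr \tfrac{1}{\rho} x_{2} = 0$, which is the second claimed boundary identity.

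\textbf{Step 3 (Read off the first identity).} Plugging $k_{2} B_{2} x = 0$ back into the boundary condition $B_{1} x + k_{2} B_{2} x = 0$ immediately gives $B_{1} x = 0$, i.e.,
\begin{equation*}
  k_{1} \boundtr x_{1} + \normaltr T \grad x_{1} = 0,
\end{equation*}
which is the first claimed identity.

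The main (and only real) subtlety is Step 2: one must be careful with the Gelfand-triple embeddings $j_{+}$ and $j_{-}$ and verify that the $\soboH^{-\frac{1}{2}}/\cH^{\frac{1}{2}}$ dual pairing genuinely reduces to the $\Lp{2}(\tilde{\Gamma})$ inner product when composed with the multiplication operator $k_{2} \in \Lb(\Lp{2}(\tilde{\Gamma}))$. Once this is in place, the remainder is a direct computation with no further analytic difficulty.
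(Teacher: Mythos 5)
Your proposal is correct and follows essentially the same route as the paper's proof: compute $\Re\scprod{A_{\Theta}x}{x}_{X}=0$ from the purely imaginary eigenvalue, reduce it via the Green identity of \Cref{th:green-identity} to $\Re\dualprod{B_{1}x}{B_{2}x}_{\mp\frac{1}{2}}$, insert the boundary condition to obtain $-\norm[\big]{k_{2}^{\frac{1}{2}}\boundtr\tfrac{1}{\rho}x_{2}}^{2}_{\Lp{2}(\tilde{\Gamma})}=0$, and then feed $k_{2}B_{2}x=0$ back into the boundary condition to get $B_{1}x=0$. Your explicit attention to the Gelfand-triple embeddings is a slightly more careful rendering of the paper's remark that the duality reduces to the $\Lp{2}(\tilde{\Gamma})$ inner product when both arguments lie in $\Lp{2}(\tilde{\Gamma})$, but the argument is the same.
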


\begin{proof}
  We can calculate for $\lambda \in \upsigma(A_{\Theta})$ and a corresponding eigenvector $x \in \dom A_{\Theta}$:
  \begin{align*}
    0 = \Re \scprod[\big]{\smash[b]{\underbrace{(A - \lambda)x}_{=\mathrlap{0}}}}{x}
    &= \Re \scprod{Ax}{x} - \Re \lambda \scprod{x}{x}\\
    &= \tfrac{1}{2}\bigl(\scprod{Ax}{x} + \scprod{x}{Ax}\bigr) - \Re \lambda \norm{x}^{2}\\
    &= \tfrac{1}{2}\bigl(\dualprod{B_{1}x}{B_{2}x}_{\mp \frac{1}{2}} + \dualprod{B_{2}x}{B_{1}x}_{\pm \frac{1}{2}}\bigr) - \Re \lambda \norm{x}^{2}\\
    &= \Re \dualprod{B_{1}x}{B_{2}x}_{\mp \frac{1}{2}} - \Re \lambda \norm{x}^{2}\\
    &= \Re \dualprod{\normaltr T\grad x_{1} + k_{1} \boundtr x_{1}}{\boundtr \tfrac{1}{\rho}x_{2}}_{\mp\frac{1}{2}} - \Re \lambda \norm{x}^{2}.
  \end{align*}
  If $\lambda\in \iu \R$, we obtain
  \begin{equation*}
    0 = \Re \dualprod{\normaltr T\grad x_{1} + k_{1} \boundtr x_{1}}{\boundtr \tfrac{1}{\rho}x_{2}}_{\mp\frac{1}{2}}.
  \end{equation*}
  Appealing to the boundary condition
  \begin{equation*}
    B_{1}x + \Theta B_{2}x = 0
    \quad \Longleftrightarrow \quad
    k_{1} \boundtr x_{1} + \normaltr T \grad x_{1} + k_{2} \boundtr \tfrac{1}{\rho} x_{2} = 0
  \end{equation*}
  and the fact that the duality can be written as an inner product if the arguments are in $\Lp{2}(\partial\Omega)$, we infer
  \begin{equation*}
    0 = \Re \dualprod{-k_{2}\boundtr \tfrac{1}{\rho}x_{2}}{\boundtr \tfrac{1}{\rho}x_{2}}_{\mp\frac{1}{2}}
    = -\Re \scprod{k_{2}\boundtr \tfrac{1}{\rho}x_{2}}{\boundtr \tfrac{1}{\rho}x_{2}}
    = -\norm{k_{2}^{\frac{1}{2}} \boundtr \tfrac{1}{\rho} x_{2}}^{2}\text{,}
  \end{equation*}
  which implies $k_{2}\boundtr \tfrac{1}{\rho} x_{2} = 0$. Employing the boundary condition provided by $\Theta$
  again, we deduce $\normaltr T\grad x_{1} + k_{1} \boundtr x_{1} = 0$.
\end{proof}

Note that the previous result says that any eigenvector $x = \begin{psmallmatrix} x_{1}\\x_{2}\end{psmallmatrix}$ to an eigenvalue $\iu \lambda$ satisfies $\boundtr \tfrac{1}{\rho} x_{2} = 0$ on $\supp k_{2}$, which by assumption \ref{item:assumption-boundary-operators} implies that there is a non-empty open set $\Gamma$ contained in $\supp k_{2}$ such that $\boundtr \tfrac{1}{\rho} x_{2} = 0$ on $\Gamma$.

To study purely imaginary eigenvalues, we need to investigate the equation $(\iu \lambda - A_{\Theta})u = 0$, which can be written as the system:
\begin{align}
  \begin{aligned}
    \label{eq:invSys}
    \iu \lambda u_{1} - \tfrac{1}{\rho}u_{2} &= 0 \quad\text{in}\; \soboH^{1}(\Omega), \\
    \iu \lambda u_{2} - \div T\grad u_{1} &= 0 \quad\text{in}\; \Lp{2}(\Omega).
  \end{aligned}
\end{align}
We consider the cases $\lambda \in \R \setminus\set{0}$ and $\lambda = 0$.

\begin{lemma}
  \label{th:imaginary-axis-in-resolvent}
  $\iu \lambda -A_{\Theta}$ is boundedly invertible for $\lambda \in \R\setminus \set{0}$, i.e., $\iu \R \setminus \set{0} \subseteq \uprho(A_{\Theta})$.
\end{lemma}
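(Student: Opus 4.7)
Since \Cref{onlyEV} tells us that $\upsigma(A_{\Theta})$ consists purely of eigenvalues, it is enough to rule out purely imaginary eigenvalues $\iu\lambda$ with $\lambda \in \R \setminus \set{0}$; bounded invertibility then follows from the very argument used in the proof of \Cref{onlyEV} (Riesz--Schauder combined with the open mapping theorem). Thus I would take $u = \begin{psmallmatrix} u_{1} \\ u_{2} \end{psmallmatrix} \in \dom A_{\Theta}$ satisfying $(\iu\lambda - A_{\Theta})u = 0$ and show $u = 0$.

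Writing out \eqref{eq:invSys}, the first component yields $u_{2} = \iu\lambda \rho u_{1}$, and substituting into the second gives the elliptic eigenvalue equation
\begin{equation*}
  \div T \grad u_{1} + \lambda^{2} \rho u_{1} = 0 \quad \text{in}\ \Omega.
\end{equation*}
Next I invoke \Cref{th:BdyOfEV}, which provides the boundary information
\begin{equation*}
  \normaltr T \grad u_{1} + k_{1} \boundtr u_{1} = 0 \quad\text{and}\quad k_{2} \boundtr \tfrac{1}{\rho} u_{2} = 0.
\end{equation*}
Using $\tfrac{1}{\rho} u_{2} = \iu\lambda u_{1}$ and $\lambda \neq 0$, the second relation becomes $k_{2} \boundtr u_{1} = 0$ on $\tilde{\Gamma}$. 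Appealing to the standing stability assumption that $k_{2}$ is strictly positive on some non-empty open subset $\Gamma \subseteq \tilde{\Gamma}$, I deduce $\boundtr u_{1} = 0$ on $\Gamma$, and hence, via the first relation restricted to $\Gamma$, also $\normaltr T \grad u_{1} = 0$ on $\Gamma$.

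At this stage $u_{1}$ solves a second-order elliptic equation with Lipschitz coefficients whose full Cauchy data vanishes on an open piece of $\partial\Omega$. I would then conclude $u_{1} \equiv 0$ by the unique continuation principle (this is precisely why the Lipschitz regularity of $T$ and $\rho$ was imposed for the stability part, and the applicability in our geometric setting is checked in the appendix). Once $u_{1} = 0$, the relation $u_{2} = \iu\lambda \rho u_{1}$ forces $u_{2} = 0$, which closes the argument. The main obstacle is the invocation of unique continuation: one has to make sure the solution $u_{1}$ is regular enough, and that the open portion $\Gamma$ on which both Dirichlet and Neumann data vanish is admissible for the Cauchy-data version of the unique continuation theorem---both hinging on the additional stability hypotheses on $T$, $\rho$, and $k_{2}$, and deferred to the appendix.
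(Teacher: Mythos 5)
Your proposal is correct and follows essentially the same route as the paper: reduce to excluding purely imaginary eigenvalues via \Cref{onlyEV}, derive $\div T\grad u_{1} + \lambda^{2}\rho u_{1} = 0$ from \eqref{eq:invSys}, combine $\tfrac{1}{\rho}u_{2} = \iu\lambda u_{1}$ with \Cref{th:BdyOfEV} to obtain vanishing Dirichlet and normal-trace data on an open set where $k_{2}>0$, and conclude $u_{1}=0$ (hence $u_{2}=0$) by the unique continuation principle from the appendix. This is precisely the paper's argument, with the Riesz--Schauder step from \Cref{onlyEV} merely made explicit.
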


\begin{proof}
  Let $\lambda \in \R \setminus \set{0}$ and let us assume that $u = \begin{psmallmatrix}u_{1}\\u_{2}\end{psmallmatrix}$ is an eigenvector to the eigenvalue $\lambda$. Then from (\ref{eq:invSys}) we obtain that $u_{1},u_{2}\in \soboH^{1}(\Omega)$ and plugging the second equation into the first, we obtain that $u_{1}\in \soboH^{1}(\Omega)$ has to satisfy
  \begin{equation*}
    \lambda^{2}\rho u_{1} + \div T\nabla u_{1} = 0.
  \end{equation*}
  Combining the first equation of \eqref{eq:invSys} and \Cref{th:BdyOfEV} yields
  \begin{equation*}
    \boundtr u_{1} = 0 \quad\text{and}\quad \normaltr T\grad u_{1} = 0 \quad \text{on} \quad \supp k_{2}.
  \end{equation*}
  Given these boundary conditions, the unique continuation principle\footnote{It is here that our stronger assumptions from the beginning of the section come into play. They are not required anywhere else!}, cf.\ \Cref{sec:unique continuation}, implies that $u_{1}=0$ is the only solution, which in turn implies $u_{2}=0$ and therefore $u=0$, which is a contradiction.
\end{proof}

\begin{lemma}
  \label{th:zero-in-resolvent}
  $A_{\Theta}$ is boundedly invertible, i.e., $0 \in \uprho(A_{\Theta})$.
\end{lemma}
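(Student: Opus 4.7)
The plan is to argue that $0$ is not an eigenvalue of $A_\Theta$ and then invoke \Cref{onlyEV} together with the Riesz--Schauder argument already used there: since $\sigma(A_\Theta)$ consists only of eigenvalues, injectivity of $A_\Theta - 0$ at once upgrades to bounded invertibility (via compactness of the embedding from \Cref{th:compact-embedding} and the open mapping theorem). So the entire task reduces to analyzing the homogeneous equation $A_\Theta u = 0$ for $u = \begin{psmallmatrix} u_1 \\ u_2 \end{psmallmatrix} \in \dom A_\Theta$.

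Setting $\lambda = 0$ in the system~\eqref{eq:invSys}, the first line reads $\tfrac{1}{\rho} u_2 = 0$ in $\soboH^1(\Omega)$, and the lower bound on $\rho$ forces $u_2 = 0$. The second line then becomes $\div T \grad u_1 = 0$ in $\Lp{2}(\Omega)$, and the boundary relation in $\dom A_\Theta$ collapses: since $B_2 u = \boundtr \tfrac{1}{\rho} u_2 = 0$, the condition $B_1 u + k_2 B_2 u = 0$ becomes simply
\begin{equation*}
  k_1 \boundtr u_1 + \normaltr T \grad u_1 = 0 \quad\text{on } \tilde{\Gamma},
\end{equation*}
while membership in $\cH^1_{\Gamma_0}(\Omega)$ enforces $\boundtr u_1 = 0$ on $\Gamma_0$.

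Next I would test $\div T \grad u_1 = 0$ against $u_1$ and apply the integration by parts formula from \Cref{th:PI}, together with the boundary relation, to obtain
\begin{equation*}
  0 = -\scprod{\div T \grad u_1}{u_1}_{\Lp{2}(\Omega)} = \scprod{T \grad u_1}{\grad u_1}_{\Lp{2}(\Omega)} + \scprod{k_1 \boundtr u_1}{\boundtr u_1}_{\Lp{2}(\tilde{\Gamma})}.
\end{equation*}
Coercivity of $T$ and positivity of $k_1$ yield $\grad u_1 = 0$ and $k_1^{1/2} \boundtr u_1 = 0$ simultaneously. The right-hand side is precisely the equivalent $\cH^1_{\Gamma_0}(\Omega)$-norm from \eqref{eq:equivalent-H1-norm}, so $u_1 = 0$ follows immediately.

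The subtle step---and the only one worth flagging---is the last one: its validity rests entirely on the standing dichotomy $k_1 \neq 0$ or $\Gamma_0 \neq \emptyset$ together with connectedness of $\Omega$. Without this, a nonzero constant $u_1$ would satisfy $\grad u_1 = 0$ with no boundary penalty and slip through, which is exactly the kernel obstruction one expects for a pure Neumann Laplacian. Both the connectedness assumption \textup{(A1)} and assumption~\ref{item:assumption-boundary-operators} are in force, so the argument closes and yields $\ker A_\Theta = \set{0}$, completing the reduction. Notably, this proof needs neither the Lipschitz regularity of $T,\rho$ nor the unique continuation principle---those are only required in \Cref{th:imaginary-axis-in-resolvent} for the nonzero imaginary eigenvalues.
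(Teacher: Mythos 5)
Your proposal is correct and follows essentially the same route as the paper: reduce to showing $0$ is not an eigenvalue via the compact embedding (\Cref{th:compact-embedding} and \Cref{onlyEV}), conclude $u_2=0$ from \eqref{eq:invSys}, collapse the boundary condition to $k_1\boundtr u_1 + \normaltr T\grad u_1 = 0$, and then test $\div T\grad u_1 = 0$ against $u_1$ and integrate by parts so that coercivity of $T$, positivity of $k_1$ and the Friedrichs/Poincar\'e-based equivalent norm force $u_1 = 0$. The only cosmetic difference is that you obtain the collapsed boundary condition directly from $u_2=0$ instead of citing \Cref{th:BdyOfEV}, which changes nothing of substance.
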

\begin{proof}
  Since $\dom A_{\Theta}\overset{\mathrm{cpt}}{\hookrightarrow} X$ by \Cref{th:compact-embedding}, it suffices to show that $0$ is not an eigenvalue.
  Suppose $0$ is an eigenvalue and $x \in \dom A_{\Theta}$ a corresponding eigenvector. Then from (\ref{eq:invSys}) we immediately see $x_{2} = 0$. Moreover, by \Cref{th:BdyOfEV}
  \begin{equation*}
    \normaltr T\grad x_{1} + k_{1}\boundtr x_{1} = 0.
  \end{equation*}
  We test the equation $0 = \div T \grad x_{1}$ with $x_{1}$ and integrate by parts:
  \begin{align*}
    0 =\scprod{\div T\grad x_{1}}{x_{1}}_{\Lp{2}(\Omega)}
      = -\scprod{T\grad x_{1}}{\grad x_{1}}_{\Lp{2}(\Omega)} + \dualprod{\normaltr T\grad x_{1}}{\boundtr x_{1}}_{\mp \frac{1}{2}}.
  \end{align*}
  Utilizing the boundary condition $\normaltr T\grad x_{1} + k\boundtr x_{1} = 0$ we obtain
  \begin{equation*}
    \scprod[\big]{\begin{psmallmatrix}x_{1} \\ x_{2}\end{psmallmatrix}}{\begin{psmallmatrix}x_{1} \\ x_{2}\end{psmallmatrix}}_{X}
    = \scprod[\big]{\tfrac{1}{\rho}x_{2}}{x_{2}}_{\Lp{2}(\Omega)} + \scprod{T \grad x_{1}}{\grad x_{1}}_{\Lp{2}(\Omega)} + \scprod{k_{1} \boundtr x_{1}}{\boundtr x_{1}}_{\Lp{2}(\tilde{\Gamma})}
    = 0\text{.}
  \end{equation*}
  Thus $x_{1} = 0$ as well, but $x = 0$ is not an eigenvector. Hence, $0$ cannot be an eigenvalue.
\end{proof}

Finally, we arrive at the stability result.

\begin{theorem}
  $A_{\Theta}$ generates a semi-uniformly stable semigroup.
\end{theorem}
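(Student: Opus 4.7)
The plan is to invoke the spectral criterion of Proposition~$\ref{CritForSUS}$ (in the spirit of Arendt--Batty--Duyckaerts): a bounded $C_0$-semigroup whose generator has empty spectrum on $\iu\R$ is automatically semi-uniformly stable. So I only need to verify two things---boundedness of the semigroup and $\upsigma(A_\Theta) \cap \iu\R = \emptyset$---both of which have effectively been arranged by the preceding lemmas.

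First, I would note that boundedness is immediate: Theorem~$\ref{th:AThetaIsGen}$ says that $A_\Theta$ generates a contraction semigroup, so $\norm{T(t)} \leq 1$ for all $t \geq 0$ and in particular $(T(t))_{t \geq 0}$ is bounded.

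Next, I would establish the spectral condition by splitting the imaginary axis into the nonzero and zero cases. For $\lambda \in \R \setminus \set{0}$, Lemma~$\ref{th:imaginary-axis-in-resolvent}$ yields that $\iu\lambda - A_\Theta$ is boundedly invertible. For $\lambda = 0$, Lemma~$\ref{th:zero-in-resolvent}$ provides the bounded invertibility of $A_\Theta$ itself. Combining these, the entire imaginary axis lies in the resolvent set of $A_\Theta$, so the hypotheses of Proposition~$\ref{CritForSUS}$ are satisfied and the semigroup is semi-uniformly stable.

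The real difficulty lies not in this final assembly but in the supporting material. The subtlest ingredient is Lemma~$\ref{th:imaginary-axis-in-resolvent}$: an eigenvector for a purely imaginary $\iu\lambda$ with $\lambda \neq 0$ would solve the elliptic equation $\lambda^{2} \rho u_{1} + \div T \grad u_{1} = 0$ and, by Lemma~$\ref{th:BdyOfEV}$ together with the first equation of \eqref{eq:invSys}, have vanishing Dirichlet and conormal traces on the open damping set $\Gamma \subseteq \supp k_{2}$, so that the unique continuation principle forces $u_{1} \equiv 0$. This is exactly the point at which the extra hypotheses of the stability section---Lipschitz regularity of $T$ and $\rho$, and positivity of $k_{2}$ on an open subset of $\tilde{\Gamma}$---enter in an essential way, and without them the theorem would fail.
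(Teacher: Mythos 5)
Your proposal is correct and follows essentially the same route as the paper: boundedness of the semigroup from the contraction property in Theorem~\ref{th:AThetaIsGen}, the imaginary axis in the resolvent set via Lemma~\ref{th:imaginary-axis-in-resolvent} and Lemma~\ref{th:zero-in-resolvent}, and then Proposition~\ref{CritForSUS}. The paper's proof additionally cites Theorem~\ref{onlyEV} explicitly, but that is already implicit in the two lemmas you invoke, so there is no substantive difference.
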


\begin{proof}
  By \Cref{th:AThetaIsGen}, $A_{\Theta}$ generates a contraction semigroup. \Cref{onlyEV} shows that $\upsigma(A_{\Theta})$ consists only of eigenvalues, \Cref{th:imaginary-axis-in-resolvent} and \Cref{th:zero-in-resolvent} show that the imaginary axis is contained in the resolvent set of $A_{\Theta}$. \Cref{CritForSUS} then implies the claim.
\end{proof}

\begin{remark}
  We can also obtain semi-uniform stability of the full problem \eqref{FullWE} under suitable assumptions on the perturbation
  \begin{equation*}
    S\colon X\supseteq \dom (S)\to X,\quad
    \begin{pmatrix}x_{1} \\ x_{2}\end{pmatrix}
    \mapsto\begin{pmatrix} 0 \\ -ax_{1} - b \tfrac{1}{\rho}x_{2}\end{pmatrix}\text{.}
  \end{equation*}
  We see from \Cref{CritForSUS} that it suffices for at least suitably $A_{\Theta}$-bounded $S$ (to assure that $A_{\Theta} + S$ still generates a semigroup of contractions) that $S$ be negative.
\end{remark}

\section{Conclusion}

We established well-posedness of \Cref{FullWE} by constructing a boundary triple for the wave equation in the Lagrange representation. This framework allows us to formulate boundary conditions that involve the displacement, its velocity and its normal stress.
In particular, we showed that the proposed boundary conditions induce a maximally dissipative operator.
Well-posedness of \eqref{FullWE} is thus a simple consequence of the Lumer--Philips theorem (and simple perturbation theory).

For our results on stability we first point out, that the situation covered in this article does not grant access to the tools usually employed in the verification of stability of solutions, which in most instances means exponential stability. The simple reason is, that in the considered case here, the damping happens on the boundary. Because of the lack of stronger techniques, we cannot show exponential stability and an application of the Gearhart--Pr\"uss theorem seems out of reach, as we would need to prove sufficient resolvent estimates. The notion of ``semi-uniform stability'' presents a way out, with a convenient criterium (\Cref{CritForSUS}) requiring simple spectral theory. From the fact, that the domain of our differential operator $A_{\Theta}$ is compactly embedded into the state space (\Cref{th:compact-embedding}), we can immediately conclude that its spectrum is a pure point-spectrum, making the spectral condition of \Cref{CritForSUS} easy to check. We point out, that the bottleneck for the stability part is the application of a unique continuation theorem to prove that the invariant system (\ref{eq:invSys}) with zero boundary only admits the trivial solution.
Only in this instance do we require Lipschitz continuity of $T$ and $\rho$.


\appendix

\begingroup 
\crefalias{section}{appendix} 

\section{Friedrichs/Poincar{\'e} inequality}

We use a slightly modified version of Friedrichs/Poincar{\'e} inequality.
\begin{theorem}\label{th:Poincare}
  Let $\Omega$ be a bounded and connected Lipschitz domain, $\Gamma \subseteq \partial\Omega$ of positive measure and $k_{1} \in \Lb(\Lp{2}(\Gamma))$ such that $1 \notin \ker k_{1}$.
  Then there exists a $C > 0$ such that for all $f \in \soboH^{1}(\Omega)$:
  \begin{equation*}
    \norm{f}_{\Lp{2}(\Omega)} \leq C \bigl(\norm{\grad f}_{\Lp{2}(\Omega)} + \norm[\big]{k_{1}^{\frac{1}{2}}\boundtr f}_{\Lp{2}(\Gamma)}\bigr).
  \end{equation*}
\end{theorem}

\begin{proof}
  Assume that there is no such $C > 0$. Then we find a sequence $(f_{n})_{n\in\N}$ in $\soboH^{1}(\Omega)$ such that
  \begin{equation*}
    \norm{f_{n}}_{\Lp{2}(\Omega)} > n \bigl(\norm{\grad f_{n}}_{\Lp{2}(\Omega)} + \norm[\big]{k_{1}^{\frac{1}{2}} \boundtr f_{n}}_{\Lp{2}(\Gamma)}\bigr).
  \end{equation*}
  We define $g_{n} \coloneq \frac{f_{n}}{\norm{f_{n}}_{\Lp{2}(\Omega)}}$, which implies
  \begin{equation*}
    \norm{g_{n}}_{\Lp{2}(\Omega)} = 1,
    \quad
    \norm{\grad g_{n}}_{\Lp{2}(\Omega)} \to 0
    \quad\text{and}\quad
    \norm{k_{1}^{\frac{1}{2}}\boundtr g_{n}}_{\Lp{2}(\Gamma)} \to 0.
  \end{equation*}
  By the Rellich--Kondrachov theorem (\cite[Thm.~12.30]{Leoni2017}) and after passing to a subsequence, $(g_{n})_{n\in\N}$ converges to a $g\in\Lp{2}(\Omega)$ (w.r.t.\ $\norm{\argdot}_{\Lp{2}(\Omega)}$).
  For $\phi \in \Cc(\Omega)^{d}$ we have
  \begin{equation*}
    \scprod{\div \phi}{g}_{\Lp{2}(\Omega)}
    = \lim_{n\to\infty} \scprod{\div \phi}{g_{n}}_{\Lp{2}(\Omega)}
    = \lim_{n\to\infty} -\scprod{\phi}{\grad g_{n}}_{\Lp{2}(\Omega)}
    = 0,
  \end{equation*}
  which implies that $g \in \soboH^{1}(\Omega)$ and $\grad g = 0$. Consequently, $g$ is constant, i.e., there exists a $c \in \C$ such that $g = c$. Moreover, the sequence $(g_{n})_{n\in\N}$ converges also w.r.t.\ $\norm{\argdot}_{\soboH^{1}(\Omega)}$ to $g$. This shows
  \begin{equation*}
    k_{1} c = k_{1}\boundtr g  = k_{1}^{\frac{1}{2}}\lim_{n\to\infty} k_{1}^{\frac{1}{2}}\boundtr g_{n} = 0
  \end{equation*}
  and from the assumption on $k_{1}$ we deduce $c = 0$,
  which contradicts $\norm{g}_{\Lp{2}(\Omega)} = 1$.
\end{proof}

\section{Regularity}
\label{sec:regularity}

Note that for general (non-Lipschitz continuous) $T$, the $\conC^{\infty}$-functions are not necessarily dense in the domain of $\div T \grad$, as they may not even lie in the domain. Hence, arguments that rely on smooth functions in the domain of $\div T \grad$ and extend these properties by density, require an alternative dense set of regular functions.
It is possible to obtain (some of) the results of \Cref{sec:well-posedness} by identifying such a set of functions, but ultimately it is not required for our purposes. Nevertheless, we present a statement identifying a core for $A$ that can be viewed as a regularity statement for the boundary; specifically, about the existence of a meaningful $\Lp{2}(\partial\Omega)$ normal trace for $T \grad w$. We define the following spaces
\begin{align*}
  \cH(\div 0,\Omega) &\coloneq \dset{u \in \soboH(\div,\Omega)}{\div u = 0, \normaltr u = 0},\\
  \hH(\div,\Omega) &\coloneq \dset{u \in \soboH(\div,\Omega)}{\normaltr u \in \Lp{2}(\partial\Omega)}.
\end{align*}

We will disregard the additional boundary condition on $\Gamma_{0}$ imposed by $\cH_{\Gamma_{0}}^{1}(\Omega)$ (i.e., we regard $\Gamma_{0} = \emptyset$), as it actually simplifies the argument rather than complicating it, because in this space, $\norm{\grad \argdot}_{\Lp{2}(\Omega)}$ is equivalent to the full $\soboH^{1}(\Omega)$-norm.
The only non-obvious aspect in treating such a split boundary is to establish a Helmholtz decomposition, but fortunately this situation is covered by \cite[Thm.~5.3]{BaPaScho16}.

\begin{theorem}\label{th:regular-functions}
  The set $D_{1} \coloneq \dset{f \in \soboH^{1}(\Omega)}{T\grad f \in \hH(\div,\Omega)}$ is a core of $L = \div T \grad$ (with $\dom L = \dset{f \in \soboH^{1}(\Omega)}{T \grad f \in \soboH(\div,\Omega)}$ as an operator on $\Lp{2}(\Omega)$).
\end{theorem}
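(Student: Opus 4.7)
My strategy is to show that for every $f \in \dom L$ one can construct a sequence $(f_n) \subset D_1$ with $L f_n = Lf$ for all $n$ and $f_n \to f$ in $\soboH^{1}(\Omega)$, hence in $\Lp{2}(\Omega)$. The key observation is that membership in $D_1$ asks only for additional $\Lp{2}$-regularity of the normal trace $\normaltr T \grad f$ beyond membership in $\dom L$, so I shall keep $u := Lf \in \Lp{2}(\Omega)$ fixed and merely replace the a priori $\soboH^{-\frac{1}{2}}$ normal trace $\eta := \normaltr T \grad f$ by an $\Lp{2}$-approximation, recovering $f_n$ by solving an elliptic Neumann-type boundary value problem.

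First, by \Cref{th:PI} applied to $f$ with test function $v = 1$, the compatibility $\int_\Omega u = \dualprod{\eta}{1}_{\mp \frac{1}{2}}$ holds. Since $\Lp{2}(\partial\Omega)$ embeds densely into $\soboH^{-\frac{1}{2}}(\partial\Omega)$, I would first choose $\tilde\eta_n \in \Lp{2}(\partial\Omega)$ with $\tilde\eta_n \to \eta$ in $\soboH^{-\frac{1}{2}}(\partial\Omega)$ and then correct by a constant,
\begin{equation*}
  \eta_n := \tilde\eta_n + \mu(\partial\Omega)^{-1}\bigl(\textstyle\int_\Omega u - \int_{\partial\Omega}\tilde\eta_n\bigr).
\end{equation*}
Testing $\tilde\eta_n \to \eta$ against $1 \in \cH^{\frac{1}{2}}(\partial\Omega)$ shows that the correcting constant tends to $0$, so $\eta_n \to \eta$ in $\soboH^{-\frac{1}{2}}(\partial\Omega)$ while $\int_\Omega u = \int_{\partial\Omega}\eta_n$ for every $n$.

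Next, I would solve the weak Neumann-type problem: find $f_n \in \soboH^{1}(\Omega)$, normalized by $\int_\Omega f_n = \int_\Omega f$, satisfying
\begin{equation*}
  \int_\Omega T \grad f_n \cdot \overline{\grad v} \dx = -\int_\Omega u \bar v \dx + \int_{\partial\Omega} \eta_n \overline{\boundtr v} \dx[\mu]
  \qquad\forall v \in \soboH^{1}(\Omega).
\end{equation*}
The compatibility ensures the right-hand side descends to $\soboH^{1}(\Omega)/\C$, on which the bilinear form is bounded and, by the Poincar\'e--Wirtinger inequality on the connected Lipschitz domain $\Omega$ combined with the uniform bounds on $T$, also coercive; Lax--Milgram thus yields existence and uniqueness. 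Reading the weak formulation distributionally gives $\div T \grad f_n = u$ in $\Omega$ and $\normaltr T \grad f_n = \eta_n \in \Lp{2}(\partial\Omega)$, so $T \grad f_n \in \hH(\div,\Omega)$ and therefore $f_n \in D_1$.

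Since $u$ is independent of $n$, $Lf_n = u = Lf$ holds identically, and graph-norm convergence reduces to $\Lp{2}$-convergence of $(f_n)$ to $f$. Continuous dependence of the Lax--Milgram solution on the boundary datum in $\soboH^{-\frac{1}{2}}(\partial\Omega)$, together with the pinning $\int f_n = \int f$, will then force $f_n \to f$ in $\soboH^{1}(\Omega)$, hence in $\Lp{2}(\Omega)$. The only delicate step I foresee is the juggling of the compatibility relation during the $\Lp{2}$-in-$\soboH^{-\frac{1}{2}}$ approximation of $\eta$, which the small constant shift above resolves; remarkably, no regularity of $T$ beyond its basic bounds enters the argument.
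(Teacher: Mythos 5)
Your proof is correct, but it takes a genuinely different route from the paper. The paper approximates $T\grad f$ in $\soboH(\div,\Omega)$ by some $g\in\hH(\div,\Omega)$ (using density of $\hH(\div,\Omega)$ in $\soboH(\div,\Omega)$), then uses the Helmholtz-type decomposition $\Lp{2}(\Omega)=T\grad\soboH^{1}(\Omega)\oplus_{T^{-1}}\cH(\div 0,\Omega)$ to project $g$ back onto a gradient $T\grad h$ without disturbing $\div$ or the normal trace, and finally fixes the constant component of $h$ by hand; this changes both $f$ and $Lf$ by order $\epsilon$. You instead keep $u=Lf$ exactly fixed, approximate only the Neumann datum $\eta=\normaltr T\grad f\in\soboH^{-\frac12}(\partial\Omega)$ by $\Lp{2}(\partial\Omega)$ functions (dual to the paper's density fact, via the Gelfand triple), restore the compatibility $\int_{\Omega}u=\int_{\partial\Omega}\eta_{n}$ by a vanishing constant shift, and recover $f_{n}\in D_{1}$ by Lax--Milgram on $\soboH^{1}(\Omega)/\C$; continuous dependence (coercivity from $c^{-1}\idop<T$ plus Poincar\'e--Wirtinger, testing with the mean-zero difference $f_{n}-f$ and bounding $\dualprod{\eta_{n}-\eta}{\boundtr(f_{n}-f)}_{\mp\frac12}$) gives $f_{n}\to f$ in $\soboH^{1}(\Omega)$, and since $Lf_{n}=Lf$ the graph-norm convergence is immediate. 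What your approach buys: exact preservation of $Lf$, hence trivial graph-norm control, and it extends verbatim to $\Gamma_{0}\neq\emptyset$ (where Poincar\'e replaces Poincar\'e--Wirtinger and the compatibility constraint disappears), whereas the paper defers that case to a different Helmholtz decomposition. What the paper's route buys: it reuses machinery already set up (\Cref{th:Helmholtz}, density of regular fields) and avoids introducing an auxiliary elliptic boundary value problem. Two cosmetic remarks: since $T$ may be a nonlocal operator, the expressions $\int_{\Omega}T\grad f_{n}\cdot\overline{\grad v}$ should be read as the inner product $\scprod{T\grad f_{n}}{\grad v}_{\Lp{2}(\Omega)^{d}}$, and the final step deserves one line making explicit that the weak formulation on the quotient extends to all $v\in\soboH^{1}(\Omega)$ by the compatibility condition, so that $\normaltr T\grad f_{n}=\eta_{n}$ follows from \Cref{th:PI} and surjectivity of $\boundtr$; neither point affects validity.
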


\begin{proof}
  Let $f \in \dom L$. Note that $\hH(\div,\Omega)$ is dense in $\soboH(\div,\Omega)$ w.r.t.\ $\norm{\argdot}_{\soboH(\div,\Omega)}$. Hence, for given $\epsilon > 0$ there exists a $g \in \hH(\div,\Omega)$ such that $\norm{T\grad f - g}_{\soboH(\div,\Omega)} \leq \epsilon$. Moreover, similar to \Cref{th:Helmholtz}, by replacing $\cgrad$, the gradient with Dirichlet boundary, with $\grad$, as in the proof there we can decompose $\Lp{2}(\Omega)$ into
  \begin{equation*}
    \Lp{2}(\Omega) = T\grad \soboH^{1}(\Omega) \oplus_{T^{-1}} \cH(\div 0,\Omega),
  \end{equation*}
  where $a \perp_{T^{-1}} b$ means $\scprod{T^{-1}a}{b}_{\Lp{2}(\Omega)} = 0$.
  Hence, there exist $h \in \soboH^{1}(\Omega)$ and $k \in \cH(\div 0,\Omega)$ such that $g = T \grad h + k$.
  Since $g,k \in \hH(\div,\Omega)$, we conclude that $T \grad h \in \hH(\div,\Omega)$. Moreover,
  \begin{equation*}
    \normaltr g = \normaltr T\grad h + \underbrace{\normaltr k}_{=\mathrlap{0}} = \normaltr T\grad h
  \end{equation*}
  and
  \begin{equation*}
    \div g = \div T \grad h + \underbrace{\div k}_{=\mathrlap{0}} = \div T \grad h.
  \end{equation*}
  This shows $\norm{\div T \grad (f - h)}_{\Lp{2}(\Omega)} = \norm{\div (T \grad f - g)}_{\Lp{2}(\Omega)} \leq \epsilon$.
  Clearly, we also have $\norm{T \grad f - g}_{\Lp{2}(\Omega)} \leq \epsilon$. Moreover, by orthogonality we obtain
  \begin{align*}
    \epsilon^{2}
    &\geq \norm{T \grad f - T \grad h - k}_{\Lp{2}(\Omega)}^{2}
    \geq \frac{1}{\norm{T^{-\frac{1}{2}}}^{2}} \norm{T \grad (f - h) - k}_{T^{-1}}^{2} \\
    &= \frac{1}{\norm{T^{-\frac{1}{2}}}^{2}} \bigl(\norm{T \grad(f-h)}_{T^{-1}}^{2} + \norm{k}_{T^{-1}}^{2}\bigr),
  \end{align*}
  which implies $\norm{T\grad(f-h)}_{T^{-1}} \leq \norm{T^{-\frac{1}{2}}}\epsilon$ and consequently $\norm{T\grad(f-h)}_{\Lp{2}(\Omega)} \leq \norm{T^{\frac{1}{2}}} \norm{T^{-\frac{1}{2}}} \epsilon$.
  Hence, we have
  \begin{equation*}
    \norm{T\grad(f - h)}_{\soboH(\div,\Omega)}
    \leq (1 + \norm{T^{\frac{1}{2}}}\norm{T^{-\frac{1}{2}}}) \epsilon.
  \end{equation*}
  Moreover, identifying $\operatorname{ker}\nabla = \C$, we have the decomposition
  \begin{equation*}
    \soboH^{1}(\Omega) = \C^{\perp} \oplus \C
  \end{equation*}
  and that $\grad \colon \C^{\perp} \to \ran \grad$ is a boundedly invertible mapping, where $\ran \grad$ is endowed with $\norm{\argdot}_{\Lp{2}(\Omega)}$. Note that $f$ and $h$ can be decomposed according to this decomposition into $f = f_{1} + f_{2}$ and $h = h_{1} + h_{2}$, where $f_{1},h_{1} \in \C^{\perp}$ and $f_{2},h_{2} \in \C$. Since $T$ is boundedly invertible, we obtain
  \begin{align*}
    \norm{f_{1} - h_{1}}_{\Lp{2}(\Omega)}
    = \norm{\grad^{-1}T^{-1}T\grad(f - h)}_{\Lp{2}(\Omega)}
    \leq C \norm{T^{\frac{1}{2}}}\norm{T^{-\frac{1}{2}}} \epsilon.
  \end{align*}
  We define $\phi = h_{1} + f_{2} \in \soboH^{1}(\Omega)$ and obtain $\grad \phi = \grad h$; in particular $\phi \in D_{1}$. This gives
  \begin{align*}
    \norm{f - \phi}_{\dom L}
    &= \sqrt{\norm{f - \phi}_{\Lp{2}(\Omega)}^{2} + \norm{\div T \grad (f - \phi)}_{\Lp{2}(\Omega)}^{2}} \\
    &= \sqrt{\norm{f_{1} - h_{1}}_{\Lp{2}(\Omega)}^{2} + \norm{\div T \grad (f - h)}_{\Lp{2}(\Omega)}^{2}}
    \leq \tilde{C} \epsilon
  \end{align*}
  for a constant $\tilde{C} > 0$. Note that by construction, $\phi \in D_{1}$ and $\norm{f - \phi}_{\dom L} \leq \hat{C} \epsilon$, which proves that $D_{1}$ is a core of $L$.
\end{proof}

Consequently, for $A = \begin{psmallmatrix}0 & \tfrac{1}{\rho} \\ \div  T \grad & 0 \end{psmallmatrix}$ with $\Gamma_{0} = \emptyset$ from \Cref{sec:well-posedness}, we obtain:
\begin{corollary}
  The set
  \begin{equation*}
    D\coloneq \dset*{\begin{psmallmatrix} x_{1} \\ x_{2} \end{psmallmatrix} \in \dom(A)}{\normaltr T\grad x_{1}\in \Lp{2}(\partial \Omega)} = D_{1} \times \rho \soboH^{1}(\Omega)
  \end{equation*}
  is a core of $A$.
\end{corollary}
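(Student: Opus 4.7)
The plan is to reduce the corollary to \Cref{th:regular-functions} by exploiting that, with $\Gamma_{0}=\emptyset$ in force throughout this appendix, both the $X$-inner product and the domain of $A$ decouple cleanly on the two components. First I would verify the equality $D = D_{1}\times\rho\soboH^{1}(\Omega)$: the condition $\tfrac{1}{\rho}x_{2}\in\soboH^{1}(\Omega)$ is equivalent to $x_{2}\in\rho\soboH^{1}(\Omega)$ because $\rho$ is bounded and boundedly invertible as a multiplication operator on $\Lp{2}(\Omega)$, and the second component carries no normal-trace restriction; the first component must lie in $\dom L$ for $L=\div T\grad$, and the extra requirement $\normaltr T\grad x_{1}\in\Lp{2}(\partial\Omega)$ says exactly that $T\grad x_{1}\in\hH(\div,\Omega)$, i.e.\ $x_{1}\in D_{1}$.

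Next I would fix $\begin{psmallmatrix}x_{1}\\x_{2}\end{psmallmatrix}\in\dom A$ and construct an approximating sequence in $D$. The second component needs no approximation, as $x_{2}$ is already in $\rho\soboH^{1}(\Omega)$. For the first, \Cref{th:regular-functions} furnishes a sequence $(\phi^{n})_{n\in\N}$ in $D_{1}$ with $\phi^{n}\to x_{1}$ in the graph norm of $L$ viewed as an operator on $\Lp{2}(\Omega)$. The graph norm of $A$ however requires the stronger $\soboH^{1}$-convergence of $\phi^{n}$ to $x_{1}$, in order to control the first-component contributions $\norm{T^{1/2}\grad(\phi^{n}-x_{1})}_{\Lp{2}}^{2}$ and $\norm{k_{1}^{1/2}\boundtr(\phi^{n}-x_{1})}_{\Lp{2}(\partial\Omega)}^{2}$ to the $X$-norm of the difference. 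I would observe that this stronger convergence is already built into the construction used in the proof of \Cref{th:regular-functions}: the constructed $\phi = h_{1}+f_{2}$ there satisfies $\grad(f-\phi)=\grad(f-h)$, and the estimate $\norm{T\grad(f-h)}_{\Lp{2}}\lesssim\epsilon$ obtained within that proof together with the bounded invertibility of $T$ yields $\norm{\grad(f-\phi)}_{\Lp{2}}\lesssim\epsilon$, promoting the approximation to $\soboH^{1}(\Omega)$.

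Setting $\xi^{n}\coloneqq\begin{psmallmatrix}\phi^{n}\\x_{2}\end{psmallmatrix}\in D$, the $X$-norm of $\xi^{n}-\begin{psmallmatrix}x_{1}\\x_{2}\end{psmallmatrix}$ tends to $0$ by the $\soboH^{1}$-convergence of $\phi^{n}$ and continuity of the Dirichlet trace, while $A\xi^{n}-A\begin{psmallmatrix}x_{1}\\x_{2}\end{psmallmatrix}=\begin{psmallmatrix}0\\\div T\grad(\phi^{n}-x_{1})\end{psmallmatrix}$ has $X$-norm equal to $\norm{\tfrac{1}{\rho}\div T\grad(\phi^{n}-x_{1})}_{\Lp{2}}$ (the first component of $A$ depends only on $x_{2}$), which vanishes by the boundedness of $\rho^{-1}$ and the $\Lp{2}$-convergence $\div T\grad\phi^{n}\to\div T\grad x_{1}$. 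This proves graph-norm convergence in $\dom A$, hence that $D$ is a core. The only non-routine step will be justifying the $\soboH^{1}$-upgrade of the approximation supplied by \Cref{th:regular-functions}; everything else is a straightforward consequence of the decoupling of the two components.
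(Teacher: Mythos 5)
Your argument is correct and is essentially the reduction the paper intends: the corollary is stated as an immediate consequence of \Cref{th:regular-functions}, with the second component needing no approximation since $\tfrac1\rho x_2\in\soboH^1(\Omega)$ already holds on $\dom A$. Your one non-routine point—that the graph norm of $L$ alone does not control the $\soboH^{1}$-part of the $X$-norm (and hence the $k_1$-boundary term), but the construction in the proof of \Cref{th:regular-functions} does, since $\grad\phi=\grad h$ and $\norm{T\grad(f-h)}_{\Lp{2}(\Omega)}\leq\norm{T^{\frac12}}\norm{T^{-\frac12}}\epsilon$ give $\soboH^{1}$-closeness via the bounded invertibility of $T$—is exactly the right observation and legitimately closes that gap.
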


\begin{remark}
  Note that the general statement with non-empty $\Gamma_{0}$ can be obtained relatively similar. The only difference is that in the proof of \Cref{th:regular-functions} we need the corresponding Helmholtz decomposition
  \begin{equation*}
    \Lp{2}(\Omega) = T \grad \cH^{1}_{\Gamma_{0}}(\Omega) \oplus_{T^{-1}} \cH_{\partial\Omega \setminus \Gamma_{0}}(\div 0,\Omega),
  \end{equation*}
  see, e.g., \cite[Thm.~5.3]{BaPaScho16}.
\end{remark}

\section{Unique continuation}%
\label{sec:unique continuation}

In this section we want to briefly explain, how we apply the unique continuation principle. In particular, we use the version from \cite{GarofaloLin1987}. To compare the notation, we have $A = T$, $\mathbf{b}=0$ and $V=\lambda^{2} \rho$ (on the left-hand side of the equality sign is the notation from \cite{GarofaloLin1987}). We impose the same assumption on $T$ and $\rho$ as in \Cref{sec:stability}, i.e., $T$ and $\rho$ are Lipschitz-continuous.

Let $\Omega \subseteq \R^{d}$ be a bounded and connected Lipschitz domain, $\Gamma \subseteq \partial\Omega$ open (in $\partial\Omega$) and $w \in \soboH^{1}(\Omega)$ be a (weak) solution of
\begin{align}\label{eq:eigenvalue-problem}
  \begin{aligned}
    \div T \grad w + \lambda^{2} \rho w &= 0 \quad \text{in}\ \Omega, \\
    \nu \cdot T \grad w &= 0 \quad \text{on}\ \Gamma, \\
    w &= 0 \quad \text{on}\ \Gamma.
  \end{aligned}
\end{align}
We define for every $x \in \Omega$ the set $\Omega_{x}$ as in \Cref{fig:unique-continuation-Omega-x}, i.e., we choose a smooth path from $x$ to a point outside of $\Omega$ that crosses $\partial\Omega$ in (the interior of) $\Gamma$ and we define $\Omega_{x}$ as a smooth neighborhood of this path that is sufficiently small (it does not intersect $\partial\Omega$ outside of $\Gamma$). Furthermore, we define
\begin{equation}
  \label{eq:wx-def}
  w_{x}(\zeta) \coloneq
  \begin{cases}
    w(\zeta), & \zeta \in \Omega \cap \Omega_{x}, \\
    0, & \zeta \in \Omega_{x} \setminus \Omega.
  \end{cases}
\end{equation}%
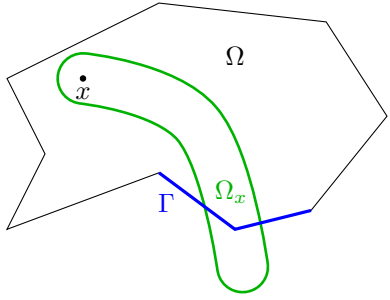
\begin{figure}[ht]%
  \centering
  \begin{tikzpicture}
    \coordinate (p1) at (2,0.75);
    \coordinate (p2) at (3,0);
    \coordinate (p3) at (4,0.25);

    \coordinate (x) at (1,2);

    \draw[line width=20pt, line cap=round, draw=green!70!black] plot [smooth, tension=0.7] coordinates {(x) (2.5,1.4)  ($(p2) + (0.1,-0.5)$)};
    \draw[line width=18pt, line cap=round, draw=white] plot [smooth,tension=0.7] coordinates {(x) (2.5,1.4)  ($(p2) + (0.1,-0.5)$)};

    \draw[fill=black] (x) circle (1pt);
    \node[below] at (x) {$x$};

    \coordinate (x-above) at ($(x) + (0,0.3)$);
    \coordinate (x-left) at ($(x) + (-0.4,0)$);
    \coordinate (x-below) at ($(x) + (0,-0.4)$);

    \draw (0,0) -- (0.5,1) -- (0,2) -- (2,3) -- (4.2,2.75) -- (5,1.5) -- (p3) -- (p2) -- (p1) -- cycle;

    \draw[very thick,blue] (p1) -- (p2) -- (p3);




    \node[xshift=-0.05cm,yshift=0.5cm,color=green!70!black] at (p2) {$\Omega_{x}$};
    \node at (3,2.3) {$\Omega$};
    \node[yshift=-0.4cm,xshift=0.1cm,blue] at (p1) {$\Gamma$};

  \end{tikzpicture}
  \caption{\label{fig:unique-continuation-Omega-x}Deriving boundary unique continuation from strong unique continuation.}
\end{figure}%
Note that we can extend $T$ and $\rho$ Lipschitz-continuously and boundedly to all of $\R^{d}$.\footnote{We can Lipschitz-continuously extend $T$ to a neigborhood of $\Omega$ and we then work with $\alpha T + (1-\alpha) \idop$, where $\alpha$ is a cutoff function. This construction gives an extension that is globally bounded. Analogously, we can extend $\rho$.}
By splitting the area of integration, using integration by parts and the boundary conditions on $\Gamma$, we can see that $w_{x} \in \soboH^{1}(\Omega_{x})$ such that $T\grad w_{x} \in \soboH(\div,\Omega_{x})$. In particular, this leads to $w_{x}$ solving (shown in \Cref{th:wx-solves-ev-problem})
\begin{equation*}
  \div T \grad w + \lambda^{2} \rho w = 0 \quad \text{in}\ \Omega_{x}.
\end{equation*}
We aim to apply the unique continuation principle \cite[Thm.~1.1]{GarofaloLin1987} to conclude that $w_{x}$ is zero. The assumptions of \cite[Thm.~1.1]{GarofaloLin1987} are easy to check for our setting, since $\mathbf{b} = 0$ and $V = \lambda^{2}\rho$ is bounded.
Hence, $w_{x} = 0$ for every $x \in \Omega$ and consequently also $w=0$.

\begin{lemma}%
  \label{th:wx-solves-ev-problem}
  Let $w \in \cH^{1}_{\Gamma}(\Omega)$ be a weak solution of \eqref{eq:eigenvalue-problem} and $w_{x}$ be defined as in \eqref{eq:wx-def}. Then $w_{x} \in \soboH^{1}(\Omega_{x})$ and $T\grad w_{x} \in \soboH(\div,\Omega_{x})$ with
  \begin{equation*}
    \grad w_{x} =
    \begin{cases}
      \grad w(\zeta), & \zeta \in \Omega \cap \Omega_{x}, \\
      0, & \zeta \in \Omega_{x} \setminus \Omega,
    \end{cases}
    \quad\text{and}\quad
    \div T\grad w_{x} =
    \begin{cases}
      \div T \grad w(\zeta), & \zeta \in \Omega \cap \Omega_{x}, \\
      0, & \zeta \in \Omega_{x} \setminus \Omega.
    \end{cases}
  \end{equation*}
  Moreover, $w_{x}$ satisfies $\div T \grad w_{x} + \lambda^{2} \rho w_{x} = 0$ in $\Omega_{x}$.
\end{lemma}

\begin{proof}
  Let $\phi \in \Cc(\Omega_{x};\C^{d})$. Then $\phi \in \Cc(\R^{d};\C^{d})$ such that $\supp \phi \cap (\partial\Omega \setminus \Gamma) = \emptyset$.
  This allows us to use integration by parts on $\Omega$ for $w \in \cH^{1}_{\Gamma}(\Omega)$ and $\phi$ without boundary terms (weak characterization of $\cH^{1}_{\Gamma}(\Omega)$, cf.~\cite{BaPaScho16}).
  Moreover, $\phi$---and therefore also $\div \phi$---is $0$ on $\Omega \setminus \Omega_{x}$.  
  In the following, we will only specify the domain of the $\Lp{2}$ space in the index of the inner product, e.g., $\scprod{\argdot}{\argdot}_{\Omega}$ instead of $\scprod{\argdot}{\argdot}_{\Lp{2}(\Omega)}$.
  \begin{align*}
    \scprod{w_{x}}{\div \phi}_{\Omega_{x}}
    &= \scprod{\underbrace{w_{x}}_{=\mathrlap{w}}}{\div \phi}_{\Omega_{x} \cap \Omega}
    + \scprod{\underbrace{w_{x}}_{=\mathrlap{0}}}{\div \phi}_{\Omega_{x} \setminus \Omega}
    = \scprod{w}{\div \phi}_{\Omega} + \scprod{w}{\underbrace{\div \phi}_{=\mathrlap{0}}}_{\Omega \setminus \Omega_{x}} \\
    &\stackrel{\mathllap{\text{Int.\ by parts}}}{=} -\scprod{\grad w}{\phi}_{\Omega}
      = -\scprod{\grad w}{\phi}_{\Omega_{x} \cap \Omega} - \scprod{0}{\phi}_{\Omega_{x} \setminus \Omega}. 
  \end{align*}
  Hence, $w_{x} \in \soboH^{1}(\Omega_{x})$ satisfies
  \begin{equation*}
    \grad w_{x} =
    \begin{cases}
      \grad w(\zeta), & \zeta \in \Omega \cap \Omega_{x}, \\
      0, & \zeta \in \Omega_{x} \setminus \Omega.
    \end{cases}
  \end{equation*}
  We proceed analogously to show that $T\grad w_{x} \in \soboH(\div,\Omega)$:
  We note that since $w$ is a weak solution, we automatically get $T \grad w \in \cH_{\Gamma}(\div,\Omega)$.\footnote{Roughly speaking, $\cH_{\Gamma}(\div,\Omega)$ is $\dset{f \in \soboH(\div,\Omega)}{\normaltr f \big\vert_{\Gamma} = 0}$. The problem is, that the restriction to $\Gamma$ in general is not defined. However, there are two approaches that make this statement rigorous (a ``strong'' one via completion of regular functions and a ``weak'' one via inner products and test functions). Both approaches fortunately coincide, cf.~\cite{BaPaScho16}. There, the authors use the notation $\mathring{\mathsf{D}}_{\Gamma}(\Omega)$ (and $\mathring{\mathcal{D}}_{\Gamma}(\Omega)$) for this set.}
  Let $\psi \in \Cc(\Omega_{x};\C)$. Then $\psi \in \Cc(\R^{d};\C)$ such that $\supp \psi \cap (\partial\Omega \setminus \Gamma) = \emptyset$. Again, this allows us to use integration by parts on $\Omega$ without boundary terms (weak characterization of $\cH_{\Gamma}(\div,\Omega)$, cf.\ \cite{BaPaScho16}).
  Hence,
  \begin{multline*}
    \scprod{T \grad w_{x}}{\grad \psi}_{\Omega_{x}}
    = \scprod{T \grad w}{\grad \psi}_{\Omega} = - \scprod{\div T \grad w}{\psi}_{\Omega} \\
    = - \scprod{\div T \grad w}{\psi}_{\Omega_{x} \cap \Omega} - \scprod{0}{\psi}_{\Omega_{x} \setminus \Omega},
  \end{multline*}
  which shows $T \grad w_{x} \in \soboH(\div,\Omega_{x})$ and
  \begin{equation*}
    \div T\grad w_{x} =
    \begin{cases}
      \div T \grad w(\zeta), & \zeta \in \Omega \cap \Omega_{x}, \\
      0, & \zeta \in \Omega_{x} \setminus \Omega.
    \end{cases}
  \end{equation*}
  Since $\div T \grad w + \lambda^{2} \rho w = 0$ holds, we immediately infer the same identity for $w_{x}$.
\end{proof}

\endgroup 

\end{document}